\newtheorem{theorem}{Theorem}
\newcommand{\beq}{\begin{eqnarray}}
\newcommand{\eeq}{\end{eqnarray}}
\newcommand{\R}{\mathbb{R}}
\newcommand{\tr}{\mathrm{tr}}
\newcommand{\C}{\mathbb{C}}
\newcommand{\ob}{\mathcal{O}}
\newcommand{\tlaph}{\tilde{\Delta}_{H}}
\newcommand{\td}{\tilde{d}}
\newcommand{\tdel}{\tilde{\delta}}
\newcommand{\gS}{\underline{\tilde{g}}}
\newcommand{\ten}{\mathcal{T}}
\newtheorem{proposition}[theorem]{Proposition}
\newtheorem{lemma}[theorem]{Lemma}
\newtheorem{corollary}[theorem]{Corollary}
\newtheorem{definition}[theorem]{Definition}
\newtheorem{remark}[theorem]{Remark}
\numberwithin{equation}{section}
\numberwithin{theorem}{section}
\begin{document}
\bibliographystyle{amsalpha} 
\title[Obstruction-flat ALE metrics]{Obstruction-flat asymptotically \\[2pt]  locally Euclidean 
metrics}
\author{Antonio G. Ache}
\author{Jeff A. Viaclovsky}
\address{Department of Mathematics, University of Wisconsin, Madison, WI 53706}
\email{ache@math.wisc.edu, jeffv@math.wisc.edu}
\thanks{Research partially supported by NSF Grant DMS-0804042}
\date{June 06, 2011. Revised September 28, 2011.}
\begin{abstract} 
We show that any asymptotically locally Euclidean (ALE) metric 
which is obstruction-flat or 
extended obstruction-flat must be ALE of a certain optimal order.  
Moreover, our proof applies to very general elliptic systems 
and in any dimension $n \geq 3$. The proof is based on the technique 
of Cheeger-Tian for Ricci-flat metrics. 
We also apply this method to obtain a singularity
removal theorem for (extended) obstruction-flat  
metrics with isolated $C^0$-orbifold singular points. 
\end{abstract}
\maketitle
\setcounter{tocdepth}{1}
\tableofcontents

\section{Introduction}
We first recall the definition of an ALE metric. 

\begin{definition}
\label{ALEdef}
{\em
 A complete Riemannian manifold $(M,g)$ 
is called {\em{asymptotically locally 
Euclidean}} or {\em{ALE}} of order $\tau$ if it has finitely 
many ends, and for each end there exists a finite subgroup 
$\Gamma \subset SO(n)$ 
acting freely on $\mathbf{R}^n \setminus B(0,R)$ and a 
diffeomorphism 
$\Psi : M \setminus K \rightarrow ( \mathbf{R}^n \setminus B(0,R)) / \Gamma$ 
where $K$ is a subset of $M$ containing all other ends, 
and such that under this identification, 
\begin{align}
\label{eqgdfale1in}
(\Psi_* g)_{ij} &= \delta_{ij} + O( r^{-\tau}),\\
\label{eqgdfale2in}
\ \partial^{|k|} (\Psi_*g)_{ij} &= O(r^{-\tau - k }),
\end{align}
for any partial derivative of order $k$, as
$r \rightarrow \infty$, where $r$ is the distance to some fixed basepoint.  
We say that $(M,g)$ is ALE of order $0$ if we can find a coordinate system 
as above with $(\Psi_*g)_{ij} = \delta_{ij} + o(1)$, 
and $\partial^{|k|} (\Psi_*g)_{ij} = o(r^{- k })$ for any $k \geq 1$ 
as $r \rightarrow \infty$. }
\end{definition}
ALE spaces are ubiquitous in modern geometric analysis, and 
we do not attempt to give a complete list of references here. 
A crucial result in the Ricci-flat case was obtained 
by Cheeger-Tian: if  $(M^n, g)$ is Ricci-flat  
ALE of order $0$, there exists a change of coordinates 
at infinity so that $(M^n, g)$ is ALE of order $n$, where 
$n$ is the dimension \cite{ct}.  This generalized and improved earlier work of 
Bando-Kasue-Nakajima \cite{BKN}, who employed improved 
Kato inequalities together with a Moser iteration argument. 
The Cheeger-Tian method has the advantage of finding the 
{\em{optimal}} order of curvature decay, without 
relying on Kato inequalities. 

 Another interesting class of metrics is that of Bach-flat 
scalar-flat ALE metrics in dimension $4$, or more generally any 
metric satisfying a system of the form 
\begin{align}
\label{kcc}
\Delta Ric = Rm * Ric, 
\end{align}
where the right hand side is shorthand for a contraction of the full curvature 
tensor with the Ricci tensor. 
In the case of anti-self-dual scalar-flat metrics, 
or scalar-flat metrics with harmonic curvature, it was proved in 
\cite{TV} that such spaces are ALE of order $\tau$
for any $\tau < 2$, using the technique of Kato inequalities. 
Subsequently, this was generalized to Bach-flat metrics 
and metrics with harmonic curvature in dimension $4$ 
in \cite{s}, using the Cheeger-Tian technique. 
In this paper, we will simplify and generalize the Streets
argument so that it also works for higher order systems
and yields the optimal ALE order. 
A simplification from \cite{s}
is that we do not need to perform the entire radial separation of 
variables on symmetric tensors to obtain the optimal decay 
rate. Rather, we show this optimal decay can be obtained 
directly in Euclidean coordinates without running into very complicated 
formulas in radial coordinates, see Proposition \ref{proplin1}.

\subsection{The ambient obstruction tensor}\label{obs}

\noindent
Let $(M^n,g)$ be an $n$-dimensional Riemannian manifold, where $n > 2$. 
Recall that the curvature tensor admits the decomposition 
\begin{align}
Rm = W + A_g \varowedge g,
\end{align}
where $W$ is the Weyl tensor, $\varowedge$ is the Kulkarni-Nomizu product, 
$A_g$ is the {\em{Schouten tensor}} defined as 
\begin{align}
A_g = \frac{1}{n-2} \left(  Ric -  \frac{R}{2(n-1)} g \right), 
\end{align}
where $R$ denotes the scalar curvature. 
Define the $n$\emph{-dimensional Bach Tensor} by (see \cite{ChangFang, g2})
\begin{align}
B_{ij}=\Delta A_{ij}-\nabla^{k}\nabla_{i}A_{kj}+A^{kl}W_{ikjl},
\label{eqobs1}
\end{align}
where $\Delta$ denotes the rough Laplacian (our convention is to use the 
analyst's Laplacian). 

If the dimension $n$ is even, then the \emph{ambient obstruction tensor} introduced in \cite{fg85, fg},
and denoted by 
$\ob$ is a symmetric $(0,2)$-tensor that has the following properties:
\begin{enumerate}
\item $\ob(g)$ is trace-free.
\item If $n=4$, $\ob_{ij}(g)$ equals $B_{ij}(g)$ where $B_{ij}(g)$ is the 
Bach tensor of $g$.
\item If $g$ is conformal to an Einstein metric then $\ob(g)=0$.
\item  $\ob(g)$ has an expansion of the form
\begin{align}
\ob_{ij}=\Delta^{\frac{n}{2}-2}B_{ij}+l.o.t.\label{eqext2_1},
\end{align}
where $l.o.t.$ denotes quadratic and higher order terms in curvature 
involving fewer derivatives.
\item $\ob(g)$ is variational, in fact $\ob(g)$ is the gradient of the functional 
\begin{align*}
\mathcal{F}(g)=\int_{M}Q_{g}dV_{g},  
\end{align*}
\noindent
where $Q_{g}$ is the \emph{Q-curvature} of $g$. In particular, 
$\ob(g)$ is divergence-free.
\end{enumerate}

\subsection{Extended obstruction tensors}\label{ext}

\noindent
If $(M^n,g)$ is even-dimensional, there is also a family of symmetric $(0,2)$-tensors 
called \emph{extended obstruction tensors} introduced in \cite{g} and denoted 
by $\Omega^{(k)}(g)$ where $1\le k\le\frac{n}{2}-2$ which have the following 
properties:  
\begin{enumerate} 
\item $\Omega^{(k)}(g)$ is trace-free.
\item When the dimension $n$ is seen as a formal parameter, $\Omega^{(k)}(g)$ 
has a pole at $n=2(k+1)$, and its residue at $n=2(k+1)$ is a multiple of 
the obstruction tensor in that dimension, for example,
\begin{align*}
\Omega^{(1)}=\frac{1}{4-n}B_{ij},
\end{align*}
and when $n=4$, $B_{ij}$ equals the obstruction tensor.
\item If $(M,g)$ is locally conformally flat then $\Omega^{(k)}(g)=0$. 
\item $\Omega^{(k)}(g)$ has an expansion of the form
\begin{align}
\Omega^{(k)}_{ij}=\frac{1}{(4-n)(6-n)\ldots(2k-n)}\Delta^{k-1}B_{ij}
+l.o.t.,\label{eqext2}
\end{align}
where $l.o.t.$ denotes quadratic and higher order terms in curvature 
involving fewer derivatives.
\end{enumerate}
To simplify notation, we define 
$\Omega^{(k)}(g) = \mathcal{O}(g)$ for $k = \frac{n}{2} - 1$.

The main theorem in this paper gives the optimal decay rate 
for obstruction-flat or extended obstruction-flat scalar-flat ALE metrics:
\begin{theorem}
\label{maint} Let $(M^n,g)$ be even-dimensional,  
scalar-flat, and $\Omega^{(k)}$-flat for some $k$ with 
$1\le k \le\frac{n}{2}-1$. 
If $(M^n, g)$ is ALE of order zero, then there exists a change of 
coordinates at infinity so that $g$ is ALE of order~$n - 2k$. 
\end{theorem}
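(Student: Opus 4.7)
The plan is to extend the Cheeger-Tian weighted-decay method to the higher-order system $\Omega^{(k)}(g) = 0$. Using the principal-part expansion $\Omega^{(k)}_{ij} = c_{n,k}\Delta^{k-1}B_{ij} + l.o.t.$ together with the scalar-flat hypothesis (so that $A_g$ is a constant multiple of $Ric$), the equation reduces to a quasilinear elliptic system of the schematic form $\Delta^{k}A = Q(Rm,\nabla Rm,\dots,\nabla^{2k-2}Rm)$ with $Q$ at least quadratic in curvature. In harmonic coordinates at infinity this is an elliptic system of order $2k+2$ for $h = g - \delta$, whose linearization at the flat metric is $\Delta^{k+1}h$ plus terms of strictly lower weight.

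First I would choose harmonic coordinates on each end. Combining the order-zero ALE hypothesis with interior Schauder estimates on dyadic annuli rescaled to unit size, a standard bootstrap for quasilinear elliptic systems with small coefficient perturbations yields a preliminary positive decay rate $\tau_0 > 0$ for $h$ and its derivatives up to order $2k+1$.

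The heart of the matter is a Euclidean linearized estimate of the form promised by Proposition \ref{proplin1}. A symmetric $2$-tensor $h$ solving $\Delta^{k+1}h = F$ on $(\mathbf{R}^n \setminus B_R)/\Gamma$ with $h \to 0$ and $F$ decaying sufficiently fast can be expanded in spherical-harmonic modes: for each $\ell$, the decaying kernel modes of $\Delta^{k+1}$ have homogeneity exponents $-(\ell+n-2) + 2j$ for $0 \le j \le k$. The slowly-decaying modes at $\ell = 0$ and $\ell = 1$ are either excluded by the equation together with the ALE structure, or are realized by an infinitesimal change of coordinates at infinity; the first genuinely obstructing mode occurs at $\ell = 2$, $j = k$, with exponent $-(n-2k)$. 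The estimate then bounds the remainder by $F$ at the optimal rate $r^{-(n-2k)}$. The promised simplification is to carry out this analysis directly in Cartesian coordinates, avoiding the full radial separation of variables used in \cite{s}.

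With the Euclidean estimate in hand, I would iterate. If currently $h = O(r^{-\tau})$ with $\tau > 0$, then $Q$ decays at rate $2\tau + 2k + 2$, and applying the Euclidean estimate on each dyadic annulus upgrades $\tau$ to $\min(2\tau, n - 2k)$ after subtracting $\ell = 0, 1$ modes, which at each stage are absorbed into a genuine change of harmonic coordinates at infinity. Iterating saturates at $\tau = n - 2k$. The main obstacle I anticipate is the third step: the kernel of $\Delta^{k+1}$ is much richer than that of $\Delta$ or $\Delta^2$, so one must carefully classify which slowly-decaying modes correspond to infinitesimal diffeomorphisms compatible with the ALE structure at the already-achieved order, and which are excluded by the $\Omega^{(k)}$-flat equation itself.
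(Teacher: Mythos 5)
Your outline tracks the high-level strategy (linearize to a power of the Laplacian, control the kernel, bootstrap the decay in dyadic annuli), and your identification of the first genuinely obstructing kernel mode at homogeneity $-(n-2k)$ agrees with Proposition~\ref{proplin1}. But there is a serious gap in the second step, and it is exactly the step that carries most of the weight of the Cheeger--Tian method.

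You assert that ``combining the order-zero ALE hypothesis with interior Schauder estimates on dyadic annuli rescaled to unit size, a standard bootstrap for quasilinear elliptic systems with small coefficient perturbations yields a preliminary positive decay rate $\tau_0>0$.'' This is not true and is the hard part of the theorem. ALE of order zero gives only $h=o(1)$ and $\partial^l h = o(r^{-|l|})$; the homogeneous linearized equation $\Delta^{k+1}h=0$ (in any reasonable gauge) has nondecaying solutions --- constants when $n>2(k+1)$, and $\log r$ modes precisely when $n=2(k+1)$ --- so a rescaled Schauder estimate on an annulus has \emph{no} mechanism to convert $o(1)$ into $O(r^{-\tau_0})$. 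Those neutral modes are compatible with the $o(1)$ decay and survive any purely local elliptic bootstrap. What the paper actually does to break this degeneracy is to pass to a \emph{modified} divergence gauge $\delta_t h=0$ with $t\neq 0$ small (Section~\ref{sfm}), show that for $t\neq 0$ the linearized operator $\mathcal{P}^{(k)}_t$ has \emph{no} degenerate solutions in the $\delta_t$-gauge (Proposition~\ref{propmod1}, using the modified Bianchi operator $\Box_t$ and its indicial roots), and then prove a Three Annulus Lemma (Lemma~\ref{lemdf2}, with a nonlinear version Lemma~\ref{lemdf3}) whose proof requires Turan's Lemma because the higher-order system produces $(\log r)^b$ factors in the separated-variables expansion. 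Only after this machinery excludes growth and degenerate behavior does one conclude $h=O(r^{-\beta'})$ for some $\beta'>0$ (Corollary~\ref{corgdf1}), which in turn permits construction of the genuine divergence-free gauge (Proposition~\ref{dftau}) that feeds the final iteration.

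Your choice of harmonic rather than divergence-free coordinates is a genuinely different gauge, and is not obviously wrong, but it does not avoid this problem: the residual gauge freedom in harmonic coordinates still leaves neutral modes in the kernel, and you would still need an analogue of the Three Annulus Lemma (or a Kato-inequality/Moser-iteration scheme in the style of Bando--Kasue--Nakajima, which the paper explicitly says does not apply to these general higher-order systems and in any case would not give the sharp order $n-2k$). Once the positive initial decay is granted, your description of the iteration --- subtract the $\ell=0,1$ modes by changes of coordinates, saturate at $-(n-2k)$ --- is essentially Section~\ref{osec} of the paper, and is fine; but ``standard bootstrap'' is doing all the work in your argument, and it cannot.
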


The method of Cheeger-Tian is to show that after a suitable change 
of coordinates, $g$ may be written as $g = g_0 + h$,
where $g_0$ is Euclidean, and $h$ is divergence-free. 
One then considers the linearization of the (extended) obstruction 
tensor at the flat metric. In the divergence-free gauge,
this becomes becomes a power of the Laplacian (the trace is 
controlled using the scalar-flat condition).
An analysis of the decay rates of solutions of the gauged linearized 
equation, together with an estimate on the nonlinear terms in the equation, 
then yields Theorem \ref{maint}. 

  The main technical complication is that the assumption of
ALE of order $0$ does not directly yield a divergence-free 
gauge. As in \cite{ct}, we obtain initially a modified divergence-free gauge 
$\delta_t h = 0$ (see Section \ref{df}). In this gauge, we must rule 
out certain solutions of the linearized equation 
which we call {\em{degenerate solutions}} (see Definition \ref{degdef}).
Once these degenerate solutions are ruled out, we are able
to find a change of coordinates so that $(M,g)$ is ALE of 
order $\beta > 0$. 
This step requires a technique of Leon Simon called the 
{\em{Three Annulus Lemma}}, 
which was also employed by Cheeger-Tian \cite{ls1, ct}. We
generalize this technique so that it applies to higher-order 
equations. For this step, we show that Turan's Lemma 
implies the necessary estimates, which we prove in the Appendix.  
Once this step is complete, it is relatively easy to find a 
divergence-free gauge using standard Fredholm Theory, and 
then to prove the optimal decay order. 
This work is carried out in Sections~\ref{lin}--\ref{osec}.

\begin{remark} 
{\em
In the case of the obstruction tensor, which is 
conformally invariant, one may obtain many examples 
through the following construction. 
Let $(M^{n},g)$ be an even-dimensional compact Einstein 
manifold with positive scalar curvature, and let $G_x$ denote the Green's 
functions of the conformal Laplacian at a point $x$. 
The metric $\hat{g} = G_x^{p} g$, where $p = \frac{4}{n-2}$, is asymptotically 
flat and scalar-flat \cite{LeeParker}. Since Einstein spaces are obstruction-flat
\cite[Theorem 2.1]{g2}, $\hat{g}$ is also obstruction-flat and 
asymptotically flat of order at least $2$. If $(M^{n},g)$ is instead locally 
conformally flat with positive scalar curvature, the same construction 
yields an $\Omega^{(1)}$-flat asymptotically flat space of order at least $n-2$.}
\end{remark}

Our method applies to much more general systems than just
the obstruction tensors, and works in any dimension $n \geq 3$.
Given two tensor fields $A, B$, the notation $A*B$ will mean a linear combination of 
contractions of $A\otimes B$ yielding a symmetric $2$-tensor. 
\begin{theorem}
\label{maint2} Let $k = 1$ if $n = 3$, or 
$1\le k \le\frac{n}{2}-1$ if $n \geq 4$.
Assume that $(M,g)$ is scalar-flat, 
ALE of order $0$, and satisfies
\begin{align}
\Delta^{k}_{g} Ric 
= \sum_{j=2}^{k+1}\sum_{\alpha_{1}+\ldots+\alpha_{j}=2(k+1)-2j}
\nabla_{g}^{\alpha_{1}}Rm*\ldots*\nabla_{g}^{\alpha_{j}}Rm.
\label{eqsrt11in}
\end{align}
Then $(M,g)$ is ALE of order $n-2k$. 
\end{theorem}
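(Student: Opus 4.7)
The plan is to adapt the proof of Theorem \ref{maint} (outlined in the introduction) to this more general elliptic system. The key structural observation is that (\ref{eqsrt11in}) has the same leading-order shape as the (extended) obstruction equation: the left side is $\Delta^{k}$ applied to the Ricci tensor, while the right side is \emph{at least quadratic} in curvature with exactly enough derivatives that both sides scale identically in the metric (indeed $\sum_i(\alpha_i+2)=2(k+1)$). This ``quadratic-or-higher'' form makes the nonlinearity negligible at leading order, which is precisely what drives the Cheeger--Tian argument.

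First, starting from ALE of order zero, I would produce coordinates at infinity in which $g = g_0 + h$ with $g_0$ Euclidean and $h$ in the modified divergence-free gauge $\delta_t h = 0$ of Section \ref{df}; the scalar-flat hypothesis then controls $\tr h$. Next, I would linearize (\ref{eqsrt11in}) at the flat metric. Since the linearization of Ricci on a transverse-traceless $h$ is $\tfrac{1}{2}\Delta h$, the left side linearizes (up to a constant) to $\Delta^{k+1} h$, while the right side vanishes to first order by the quadratic-or-higher structure. The problem thus reduces to analyzing the polyharmonic equation $\Delta^{k+1} h = 0$ on the end $(\R^n\setminus B(0,R))/\Gamma$, viewed as an equation on symmetric trace-free $2$-tensors.

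Third, I would classify the homogeneous modes of $\Delta^{k+1} h = 0$, identify the degenerate solutions corresponding to infinitesimal coordinate changes, and verify that after removing these the slowest non-trivial decay rate is exactly $r^{-(n-2k)}$ (arising from inversions of harmonic-polynomial-valued tensors, truncated at the $(k+1)$-st polyharmonic level). To pass from these formal exponents to actual decay of $h$, I would invoke the generalized Three Annulus Lemma applied to $\Delta^{k+1}$ rather than $\Delta$; the Turan-type estimate in the Appendix provides the required spectral gap for this higher-order operator. This step yields ALE of some positive order $\beta > 0$.

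Finally, I would run the nonlinear bootstrap. With $h = O(r^{-\beta})$, a typical term $\nabla^{\alpha_1} Rm * \cdots * \nabla^{\alpha_j} Rm$ with $\sum \alpha_i = 2(k+1) - 2j$ decays like $r^{-j\beta - 2(k+1)}$, which is strictly faster than the slowest linear mode whenever $j\geq 2$. Standard Fredholm theory in weighted Sobolev spaces on ALE ends then upgrades the modified gauge $\delta_t h=0$ to a genuine divergence-free gauge and iteratively absorbs the nonlinear error until the optimal order $n-2k$ is reached. I expect the principal obstacle to be the extension of the Three Annulus Lemma to $\Delta^{k+1}$: for a second-order operator the dichotomy between admissible decay exponents is immediate from spherical harmonic decomposition, but for the higher-order polyharmonic operator the exponents cluster and one genuinely needs Turan's Lemma to separate them. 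A secondary subtlety is the bookkeeping for degenerate modes, where one must check that every homogeneous solution with rate slower than $n-2k$ is either killed by the $\Gamma$-action or removable by a diffeomorphism, so that the Three Annulus dichotomy can be converted into decay of $h$ itself.
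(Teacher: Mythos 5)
Your proposal follows essentially the same route as the paper: pass to the $\delta_t$-gauge using the ALE-of-order-zero hypothesis, control the trace with scalar-flatness so that the gauged system becomes an elliptic perturbation of the polyharmonic operator $\Delta^{k+1}$, rule out degenerate solutions, apply the Turan-based Three Annulus Lemma to get initial decay of some positive order $\beta'$, upgrade to a genuine divergence-free gauge by Fredholm theory, and run a nonlinear bootstrap using the quadratic-or-higher structure of the right side of \eqref{eqsrt11in}. One small correction to the last step: the slow homogeneous modes of $\Delta^{k+1}$ of degree $2(k+1)-n$ and $2k+1-n$ (which sit between the constant and the claimed rate $r^{2k-n}$) are not eliminated by the $\Gamma$-action — that plays no role — but by the algebraic fact, proved directly in Proposition~\ref{proplin1}, that no nonzero divergence-free homogeneous solution of $\Delta^{k+1}h=0$ exists at those degrees; this is what forces the order to reach $n-2k$.
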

For $k =1$, this is simply 
\begin{align}
\label{kcc2}
\Delta Ric = Rm * Rm.
\end{align}
We emphasize that this is more general than \eqref{kcc}, since 
the right hand side is allowed to be quadratic in the full 
curvature tensor. This is satisfied in particular  
by scalar-flat K\"ahler metrics and metrics with 
harmonic curvature in any dimension, and also 
anti-self-dual metrics in dimension $4$. 
These special cases were previously considered in \cite{Chen} 
using improved Kato inequalites and a Moser iteration 
technique. We emphasize that our argument yields the optimal decay rate 
without requiring any improved Kato inequalities, and
therefore applies to the more general system \eqref{eqsrt11in}. 
The optimal decay for scalar-flat 
anti-self-dual ALE metrics was previously considered in \cite[Proposition 13]{CLW}.
The case of extremal K\"ahler ALE metrics was considered in \cite{CW}.
As mentioned above, the cases of Bach-flat metrics and 
metrics with harmonic curvature in dimension $4$ were considered in 
\cite{s}. 
However, we note that \eqref{kcc2} is more general than \eqref{kcc}. 

\begin{remark}{\em We do not need such a strong 
requirement on the decay of partial derivatives of arbitrarily high order
in \eqref{eqgdfale2in} in Definition \ref{ALEdef}, but have assumed this 
here in the introduction for simplicity of stating the result. 
We only need to assume this up to a finite number of partial 
derivatives, see Remark \ref{remale1}}. 
\end{remark}

\subsection{Singularity removal}

The methods used to prove the above results can also be applied to analyze 
isolated singularities. Similar results were proved in 
\cite{BKN, Chen, CW, CLW, s, Tian, TV2}.  
We next recall the definition of a $C^{0}$-orbifold point. 
\begin{definition}\label{c0def}{\em
Let $g$ be a metric defined on $B_{\rho}(0) \setminus \{ 0 \}$, where 
$B_{\rho}(0)$ is a metric ball in a flat cone. We say that the origin 
is a $C^{0}$-orbifold point if there exists a coordinate 
system around the origin such that
\begin{align}
g_{ij}&=\delta_{ij}+o(1),\label{eqorb1in}\\
\partial^{l}g_{ij}&=o(r^{-|l|}),\label{eqorb2in}
\end{align}
for any multi-index $l$ with $|l|\ge 1$ as $r \rightarrow 0$. 
We say that the origin is a smooth orbifold point, if 
after lifting to the universal cover of 
$B_{\rho}(0) \setminus \{ 0 \}$, the metric extends to 
a smooth metric over the origin, after diffeomorphism.}
\end{definition}

\begin{remark}{\em
As in the ALE case, we will not need to assume  \eqref{eqorb2in} for 
partial derivatives of arbitrarily high order, only 
up to a certain finite number of derivatives, see Remark \ref{ordrm2}}.
\end{remark}

 Applying the Cheeger-Tian technique directly to the singularity, 
we obtain the following. 
\begin{theorem}
\label{tr1}
 Let $B_{\rho}(0)$ be as above and even-dimensional, and let 
$g$ be (extended) obstruction-flat  
in $B_{\rho}(0)\backslash\{0\}$ with constant scalar curvature. 
If the origin is a $C^0$-orbifold point for $g$, 
then the metric extends to a smooth orbifold metric in $B_{\rho}(0)$.
\end{theorem}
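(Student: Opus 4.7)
The plan is to mirror the proof of Theorem \ref{maint} near the singular point: the Cheeger-Tian/Three Annulus machinery developed in Sections \ref{lin}--\ref{osec} for expanding annuli at infinity applies essentially verbatim to the shrinking annuli $A_j = A(2^{-j-1}\rho, 2^{-j}\rho)$ around the origin, with the roles of decay and growth in the indicial analysis swapped. The output is polynomial decay of $g - g_0$ near the origin, which is enough to invoke a standard removable-singularities argument for the resulting elliptic system.

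First, lift to the universal cover of $B_{\rho}(0) \setminus \{0\}$ so the ambient space is a punctured Euclidean ball in $\R^n$; the deck group $\Gamma$ acts by isometries of $g$, and we will descend at the end. By Definition \ref{c0def}, in suitable coordinates $g = g_{0} + h$ with $h$ and its derivatives $o(r^{-|l|})$ as $r \to 0$. Following Section \ref{df}, I would construct a diffeomorphism near the puncture placing $h$ in the modified divergence-free gauge $\tdel_t h = 0$; the constant scalar curvature hypothesis provides the control of $\tr_{g_0} h$ that the scalar-flat hypothesis provided in the ALE case. In this gauge the linearization of the (extended) obstruction tensor becomes essentially $\Delta^{k+1} h$ up to lower-order terms, exactly as before.

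Now apply the Three Annulus Lemma of Section \ref{lin} on the shrinking annuli $A_j$. Decomposing $h$ into spherical-harmonic modes on the cross-section $S^{n-1}/\Gamma$, the indicial roots for $\Delta^{k+1} h = 0$ pair up symmetrically, and the $o(1)$ hypothesis forces the half that grows as $r \to 0$ to vanish; the nonlinear curvature terms decay fast enough, via the Turán-Lemma estimate in the Appendix, not to disturb this dichotomy. The principal obstacle, and where I expect most of the work to lie, is ruling out the degenerate solutions (Definition \ref{degdef}) at the critical rates. In the ALE case these are killed by a change of coordinates at infinity; the analog here is to gauge them away by a $\Gamma$-equivariant diffeomorphism defined near the origin, and care is required so that this diffeomorphism does not itself introduce new singular behavior or destroy the modified divergence-free gauge established in the previous step.

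Once the degenerate modes are excluded, iterating the Three Annulus Lemma against the schematic nonlinear feedback $Rm * \cdots * Rm$ (bounded by the linearization plus faster-decaying remainders) yields $h = O(r^{n-2k})$, and in fact any decay rate strictly below the next indicial root. At this polynomial rate the metric extends continuously across the origin and satisfies the (extended) obstruction equation together with $R \equiv \mathrm{const}$ in the divergence-free gauge weakly across the puncture; standard elliptic bootstrapping in harmonic coordinates (using that the gauged system is elliptic of order $2k+2$) upgrades this to smoothness in the orbifold sense, and descending from the universal cover gives a smooth orbifold extension on $B_{\rho}(0)$.
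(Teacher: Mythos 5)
Your high-level plan is the right one and matches the paper's strategy: transpose the ALE machinery of Sections 2--6 to shrinking annuli around the puncture, use constant scalar curvature in place of scalar-flatness to control $\tr h$ in the $\delta_t$-gauge, and finish with elliptic bootstrapping. But two places in your outline do not survive contact with the actual argument in Section \ref{srt}.

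First, the claimed rate $h = O(r^{n-2k})$ is wrong near the origin, and the reason is instructive: unlike the asymptotic analysis at infinity, where Proposition~\ref{proplin1} rules out the slowest-decaying homogeneous divergence-free solutions of $\Delta^{k+1}h=0$, near $r=0$ there are genuine degree-2 homogeneous solutions (and degree-1 ones) that cannot be excluded by the divergence condition alone. The paper only achieves $h = O(r^2)$ (Lemma~\ref{lemsrt1}), and even reaching $O(r^2)$ requires a step that is not a ``degenerate solution'' argument in the sense of Definition~\ref{degdef}. You are conflating two different obstructions. The degenerate solutions of Definition~\ref{degdef} are those with indicial exponents of zero real part; they are killed by the $t \neq 0$ perturbation via Proposition~\ref{propmod1}. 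What actually blocks further improvement past $O(r^{1+\epsilon})$ is the degree-1 homogeneous linear part $G_1$, a \emph{decaying} solution that cannot be ruled out intrinsically. It has to be gauged away, and the mechanism is explicit: Lemma~\ref{lemlie} identifies $G_1$ as $L_X g_0$ for a quadratic vector field $X$, Lemma~\ref{lemode} controls $K_X^* g_0 - g_0 - L_X g_0 = O(r^2)$ for the time-1 flow $K_X$, and then one pulls back by $K_{-X}$ and restores the divergence-free gauge via Proposition~\ref{propdf0}. Without this concrete quadratic-flow construction, ``gauge them away by a $\Gamma$-equivariant diffeomorphism'' is a gesture, not a proof, and it is precisely here that one must verify the new diffeomorphism doesn't spoil the decay already achieved (which is what Lemma~\ref{lemode} is for).

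Second, the final bootstrap is a good deal more delicate than a one-line appeal to ``standard elliptic bootstrapping in harmonic coordinates.'' Starting from only $g = g_0 + O(r^2)$ one gets $g\in C^{1,\alpha}$, which is the bare minimum for harmonic coordinates \cite[Lemma 1.2]{dk}. The paper then must (i) commute $\nabla_g$ with $\partial$ and $\Delta_g$ with $\Delta_{g_0}$ with controlled error (Lemma~\ref{lemeuc}) so that the constant-scalar-curvature reduction $\Delta^k_g Ric = \sum \nabla^{\alpha_1}Rm * \cdots * \nabla^{\alpha_j}Rm$ can be read as a Euclidean elliptic equation with $L^p$ right-hand side; (ii) extend $Ric$ as a weak solution across the puncture and get $Ric \in W^{2k,p}$, then $C^{1,\alpha}$ via a careful choice of $p$ in~\eqref{pchc}; (iii) couple this with the quasilinear Ricci equation~\eqref{eqsrt12} in harmonic coordinates to bootstrap $g$ one derivative at a time, alternating gains on $Ric$ and on $g$. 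This step is the payoff of the constant scalar curvature hypothesis and is genuinely specific to the order-$2k+2$ system; it is not a generic removable singularity theorem and should be spelled out.
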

As in the ALE case, this theorem also applies to much more general 
higher-order systems:
\begin{theorem} 
\label{tr2}
Let $k = 1$ if $n = 3$, or 
$1\le k \le\frac{n}{2}-1$ if $n \geq 4$.
Assume that $(B_{\rho}(0) \setminus \{0\},g)$ has 
constant scalar curvature and satisfies
\begin{align}
\Delta^{k}_{g} Ric 
= \sum_{j=2}^{k+1}\sum_{\alpha_{1}+\ldots+\alpha_{j}=2(k+1)-2j}
\nabla_{g}^{\alpha_{1}}Rm*\ldots*\nabla_{g}^{\alpha_{j}}Rm.
\label{eqsrt11inorb}
\end{align}
If the origin is a $C^0$-orbifold point for $g$, 
then the metric extends to a smooth orbifold metric in $B_{\rho}(0)$.
\end{theorem}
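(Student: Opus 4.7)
The plan is to follow the proof of Theorem \ref{maint2}, applying the Cheeger--Tian technique directly at the singular origin instead of at infinity. The $C^0$-orbifold hypothesis plays the same role at $r=0$ that the ALE-order-zero hypothesis plays at $r=\infty$: it gives initial $o(1)$ control but no power rate, and the goal is to bootstrap from this weak hypothesis to full regularity across the origin. Note also that the hypothesis is on the full nonlinear system \eqref{eqsrt11inorb}, which is not conformally invariant in general, so simply inverting and appealing to Theorem \ref{maint2} is not available; one has to rerun the argument from scratch on shrinking scales.

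First, after passing to the universal cover of $B_\rho(0)\setminus\{0\}$ and working in suitable coordinates, I would produce a change of coordinates giving $g=g_0+h$ with $g_0$ flat and $h$ satisfying the modified divergence-free gauge $\delta_t h=0$ of Section \ref{df}. Combined with constant scalar curvature, this reduces \eqref{eqsrt11inorb} to a gauged equation
\begin{equation*}
\Delta^{k+1} h \;=\; N\lt(h,\nabla h,\ldots,\nabla^{2k+1}h\rt),
\end{equation*}
where $N$ collects nonlinear terms that are at least quadratic in $h$ and its derivatives. I would then apply a Three Annulus Lemma on \emph{shrinking} dyadic annuli $A_{2^{-j-1}\rho,\,2^{-j}\rho}$ at the origin, using the nonlinear estimates and Tur\'an's Lemma (Appendix) to close the iteration. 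This requires classifying the $r\to 0$ homogeneous modes of $\Delta^{k+1}$ on symmetric $2$-tensors and ruling out the degenerate solutions (Definition \ref{degdef}) using $\delta_t h=0$ together with the $o(1)$ orbifold decay. The outcome is an initial power rate $|h|=O(r^{\beta})$ for some $\beta>0$. Once this is in hand, a standard Fredholm-theoretic argument produces a true divergence-free gauge in a punctured neighborhood of $0$, in which the linearization is a pure power of the scalar Laplacian; a bootstrap through elliptic regularity and the nonlinear equation then upgrades $h$ to decay at $0$ faster than any polynomial, which suffices to extend $g$ smoothly across the orbifold point.

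The main obstacle is the shrinking-annulus adaptation of the Three Annulus Lemma for $\Delta^{k+1}$ acting on gauged symmetric $2$-tensors: one must pin down exactly which homogeneous modes are forbidden by the $\delta_t$ gauge and by the $o(1)$ orbifold hypothesis, and verify that the remaining modes satisfy the spectral gap conditions required to apply Tur\'an's Lemma in this inward setting. Because Tur\'an's Lemma is scale-invariant, once the correct list of admissible modes is identified the estimate transfers from the ALE (outward) direction to the orbifold (inward) direction essentially without change, and the remainder of the proof is a close parallel of the argument for Theorem \ref{maint2}.
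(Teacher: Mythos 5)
Your high-level outline (modified gauge $\delta_t h=0$, shrinking-annulus Three Annulus Lemma via Tur\'an's Lemma, then Fredholm divergence-free gauge and bootstrap) tracks the paper at the start, but the endgame has a genuine gap. After you obtain $h=O(r^\sigma)$ and $\delta h=0$, the bootstrap at $r\to 0$ does \emph{not} give decay faster than every polynomial. The inward-direction analogue of Proposition~\ref{propopt1} hits a real obstruction at homogeneity degree~$1$: a degree-$1$ divergence-free solution of $\Delta^{k+1}G_1=0$ is generically \emph{not} zero. Proposition~\ref{proplin1} only kills the decaying (ALE) modes of degree $2(k+1)-n$ and $2(k+1)-n-1$; at the origin the relevant modes are of degree $0,1,2,\ldots$, and the degree-$1$ modes survive the divergence-free condition. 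Concretely, $G_1$ is always of the form $L_Xg_0$ for a vector field $X$ whose components are homogeneous quadratics (Lemma~\ref{lemlie}), and these are nontrivial. So one cannot iterate past $O(r^{1+\epsilon})$ by pure decay improvement; one must remove $G_1$ by an explicit quadratic diffeomorphism $K_X$ (the time-$1$ flow of $X$, Lemma~\ref{lemode}), then re-gauge, and only then reach $h=O(r^2)$. That $O(r^2)$ is sharp, since a smooth metric near a point is $g_0+O(r^2)$ in normal coordinates; the claim that $h$ decays faster than any polynomial would force the metric to be flat to infinite order at the origin, which is false.

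Consequently the final step also needs a different argument from what you wrote. Once $h=O(r^2)$ with $\partial^l h=O(r^{2-|l|})$, the paper converts \eqref{eqsrt11inorb} to $\Delta_{g_0}^k Ric = T$ with $T\in L^p$ for a suitable $p$, uses weak-solution extension across the puncture, the Sobolev embedding to get $Ric\in C^{1,\alpha}$, then passes to harmonic coordinates in which $g$ solves a second-order quasilinear elliptic equation with right-hand side $-Ric(g)$, and bootstraps between the regularity of $g$ and of $Ric$ (using the Deturck--Kazdan theory) to conclude $g\in C^\infty$. This is a genuine elliptic regularity bootstrap for the metric across the puncture, not a statement about rapid decay of $h$. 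Without both the $K_X$ removal of the degree-$1$ mode and this harmonic-coordinates regularity step, the proposal does not reach the conclusion.
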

Theorem \ref{tr2} will be proved in Section \ref{srt}, the proof of 
which uses the same method as that of Theorem \ref{maint}, with a 
few minor modifications.  
\subsection{Acknowledgements}
The authors would like to thank  Michael Singer and Gang Tian 
for invaluable assistance from the inception of this 
project. We would also like to thank Robin Graham for crucial help with the structure of the obstruction tensors, Jeff Streets for numerous helpful 
discussions, and Sigurd Angenent and Mikhail Feldman for several helpful comments 
regarding elliptic theory. We are greatly indebted to Fedja Nazarov 
for explaining how Turan's Lemma could be used to prove Lemma~\ref{posneg}. 
Finally, we would like to thank the anonymous referee for 
numerous suggestions which greatly improved the 
exposition of the paper.  
\section{Linearized obstruction tensor}\label{lin}
We begin with some notation: in the following $\delta$ will denote 
the divergence operator, which can act either on a symmetric $2$-tensor $h$, 
or on a $1$-form $\omega$. In the former case
$\delta h = \nabla^i h_{ij}$ and in the latter case $\delta \omega = \nabla^i \omega_i$.
The $L^2$-adjoint of $\delta$ will be denoted by 
$\delta^*$, which is $-(1/2)\mathcal{L}$, where $\mathcal{L}$ 
is the Lie derivative operator, 
defined by $(\mathcal{L} \omega)_{ij} = \nabla_i \omega_j + 
\nabla_j \omega_i$.  The trace of a symmetric $2$-tensor $h$ will be denoted
by $tr(h)$. 

We now analyze the linearizations of (\ref{eqext2_1}) and  (\ref{eqext2}). 
Note that from the dependence of the lower order terms in both equations on the 
curvature tensor it follows that the linearized equations 
$\left(\Omega^{(k)}\right)^{\prime}_{g_{0}}(h)=0$ for $1\le k\le \frac{n}{2}-1$ 
at a flat metric $g_{0}$ are equivalent to 
$\Delta^{k-1}B^{\prime}_{g_{0}}(h)=0$. 
With this observation we have the following, which holds in 
any dimension $n \geq 3$: 
\begin{proposition}
At a flat metric $g_{0}$ we have
\begin{align}
\begin{split}
&\Delta^{k-1}\left(B^{\prime}_{g_{0}}(h)\right)
=\Delta^{k-1}\left(-\frac{1}{2(n-2)}\Delta^{2}h
-\frac{1}{2(n-1)(n-2)}\nabla^{2}\Delta\tr(h)\right.\\
&\left.-\frac{1}{2(n-1)}\nabla^{2}\delta\delta h 
-\frac{1}{(n-2)}\Delta\delta^{*}\delta h+\right.
\left.\frac{1}{2(n-1)(n-2)}\left(\Delta^{2}\tr(h)
-\Delta\delta\delta h\right)g_{0}\right).\label{eqlin1}
\end{split}
\end{align}
\end{proposition}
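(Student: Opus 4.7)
The plan is to linearize each constituent of the Bach tensor
$B_{ij}=\Delta A_{ij}-\nabla^{k}\nabla_{i}A_{kj}+A^{kl}W_{ikjl}$
at the flat background $g_0$, observing that the expression simplifies dramatically because $A(g_0)=0$ and $W(g_0)=0$. By the Leibniz rule, the term $A^{kl}W_{ikjl}$ contributes nothing to first order since both factors vanish on $g_0$. Likewise, variations of the metric, the Christoffel symbols, or the rough Laplacian act on quantities that are themselves zero at $g_0$, so one may replace $\Delta$ and $\nabla$ throughout by their Euclidean counterparts and vary only $A$. This reduces the computation to $B'_{g_0}(h)_{ij} = \Delta_0 A'(h)_{ij} - \partial^k\partial_i A'(h)_{kj}$, where $\Delta_0$ and $\partial$ denote the flat operators.

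The second step is to record the standard linearizations at a flat metric,
\begin{align*}
Ric'_{g_0}(h)_{ij} = -\tfrac{1}{2}\Delta h_{ij} - (\delta^{*}\delta h)_{ij} - \tfrac{1}{2}\nabla_i\nabla_j \tr(h), \qquad R'_{g_0}(h) = -\Delta\tr(h) + \delta\delta h,
\end{align*}
using the convention $\delta^{*} = -\tfrac{1}{2}\mathcal{L}$ fixed in Section~\ref{lin}. Substituting into the definition of the Schouten tensor yields
\begin{align*}
A'_{g_0}(h)_{ij} = \tfrac{1}{n-2}\Bigl(-\tfrac{1}{2}\Delta h_{ij} - (\delta^{*}\delta h)_{ij} - \tfrac{1}{2}\nabla_i\nabla_j\tr(h) + \tfrac{\Delta\tr(h)-\delta\delta h}{2(n-1)}\,g_{0,ij}\Bigr),
\end{align*}
and applying $\Delta_0$ produces four of the five terms on the right-hand side of the claim (up to the overall $\Delta^{k-1}$ applied at the very end).

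The third step handles the divergence term. The contracted second Bianchi identity gives $\nabla^{k} A_{kj} = \tfrac{1}{2(n-1)}\nabla_j R$ on any background; since derivatives commute on $g_0$ and connection variations again act on zero, one obtains
\begin{align*}
\bigl(\nabla^{k}\nabla_{i}A_{kj}\bigr)'_{g_0}(h) = \tfrac{1}{2(n-1)}\nabla_i\nabla_j R'_{g_0}(h) = \tfrac{1}{2(n-1)}\nabla_i\nabla_j\bigl(-\Delta\tr(h)+\delta\delta h\bigr),
\end{align*}
which is symmetric in $i,j$, consistent with the symmetry of $B$.

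Finally, subtracting the divergence contribution from $\Delta_0 A'(h)$ and combining the two $\nabla^{2}\Delta\tr h$ terms, whose coefficients $-\tfrac{1}{2(n-2)}$ and $+\tfrac{1}{2(n-1)}$ add to $-\tfrac{1}{2(n-1)(n-2)}$, reproduces the bracketed expression verbatim; applying $\Delta^{k-1}$, a power of the flat Laplacian that commutes with $\nabla^{2}$, $\delta$, $\delta^{*}$, and $\tr$, yields the claim. There is no analytic obstacle — the entire statement is an algebraic identity on a flat background — so the only real challenge is careful bookkeeping of the signs and of the three prefactors $1/(n-2)$, $1/(n-1)$, and $1/[(n-1)(n-2)]$.
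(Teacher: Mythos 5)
Your proposal is correct and takes essentially the same approach as the paper: both linearize $B$ term by term at the flat background, use that the $A*W$ term drops out since $A(g_0)=W(g_0)=0$, apply the contracted Bianchi identity to handle $\nabla^k\nabla_i A_{kj}$ via $R'_{g_0}(h)$, and combine the two $\nabla^2\Delta\tr(h)$ coefficients at the end before applying $\Delta^{k-1}$.
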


\begin{proof} The Bach tensor can be written as 
$B_{ij}=\Delta A_{ij}-\nabla^{k}\nabla_{i}A_{kj}+A^{kl}W_{ikjl}$, 
and at a flat metric $\left(A^{kl}W_{ikjl}\right)^{\prime}_{g_{0}}(h)=0$ for any 
$h\in S^{2}\left(T^{*}\R^{n}\right)$, so then
\begin{align*}
B^{\prime}_{ij}(h)=\Delta A^{\prime}_{ij}(h)
-\nabla^{k}\nabla_{i}A^{\prime}_{jk}(h).
\end{align*}

\noindent
By the Bianchi identity and using again that $g_{0}$ is flat we have
\begin{align}
\begin{split}
\label{eqlin0_1}
-\nabla^{k}\nabla_{i}(A_{g_{0}}^{\prime})_{jk}(h)
&=-\nabla_{i}\nabla^{k}(A_{g_{0}})^{\prime}_{kj}(h)
= -\frac{1}{2(n-1)}\nabla_i \nabla_j R_{g_{0}}^{\prime}(h)\\
&=\frac{1}{2(n-1)}\nabla_i \nabla_j \left(\Delta\tr(h)-\delta(\delta h)\right).
\end{split}
\end{align}

\noindent
On the other hand
\begin{align}
\begin{split}\label{eqlin0_2}
&\Delta A_{g_{0}}^{\prime}(h)=\frac{1}{(n-2)}\Delta\left(Ric_{g_{0}}^{\prime}(h)
-\frac{R_{g_{0}}^{\prime}}{2(n-1)}(h)g_{0}\right) \\
&=\frac{1}{(n-2)}\Delta\left(-\frac{1}{2}\Delta h
-\frac{1}{2}\nabla^{2}\tr(h)-\delta^{*}\delta h
+\frac{1}{2(n-1)}\left[\Delta\tr(h)
-\delta(\delta h)\right]g_{0}\right).
\end{split}
\end{align}
Adding (\ref{eqlin0_1}) and (\ref{eqlin0_2}) together and taking
$\Delta^{k-1}$ we obtain (\ref{eqlin1}).

\end{proof}
The linearized equations 
are \emph{not} strictly elliptic due to the 
diffeomorphism invariance of the (extended) obstruction-flat equations. 
Note, however, that if $h$ is divergence free and if the trace of 
$h$ is harmonic then from (\ref{eqlin1}) the linearized equations reduce to 
\begin{align}
\Delta^{k+1}h&=0,\label{eqlin2}\\
\Delta\tr(h)&=0,\label{eqlin3}\\
\delta h&=0,\label{eqlin4}
\end{align}
and (\ref{eqlin2}) is an elliptic equation on $h$. We point out that we will 
\emph{not} be able to prescribe that $h$ be divergence-free at first, 
so we will follow the approach in \cite{ct} and introduce a modified 
divergence operator in Section~\ref{sfm}. 
The proof of Theorem~\ref{maint} relies on the following crucial proposition.
\begin{proposition}\label{proplin1}Let $n > 2$, 
and $h\in S^{2}(T^{*}\R^{n})$ be a solution on $\R^{n}{\backslash} B_{\rho}(0)$ 
for some $\rho>0$ of the system 
\begin{align}
\Delta^{k+1} h&=0,\label{eqlin5}\\
\delta h&=0,\label{eqlin6}
\end{align}
for $1\le k\le \frac{n}{2}-1$ (or for $k=1$ when $n=3$) satisfying $h=O(|x|^{1-\epsilon})$ with 
$\epsilon>0$. Then, $h$ can be written as 
\begin{align}
h=h_{c}+O\left(|x|^{-n+2k}\right),\label{eqlin6_1}
\end{align}
where $h_{c}$ is  constant. The result also holds for $k=0$ 
if in addition we assume that $\tr(h)=0$. 
\end{proposition}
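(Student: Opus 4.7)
My plan is to expand $h$ componentwise in scalar spherical harmonics, use the growth bound $h = O(|x|^{1-\epsilon})$ to kill growing modes, and then use the divergence-free condition to eliminate the slowest-decaying modes, reducing the problem to identifying the decay rate of what remains. Each component $h_{ij}$ satisfies the scalar equation $\Delta^{k+1} h_{ij} = 0$ on $\R^n \setminus B_\rho$. Writing $h_{ij}(r,\theta) = \sum_{d,m} \phi^{d,m}_{ij}(r) Y^m_d(\theta)$ and using $\Delta(r^\alpha Y_d) = (\alpha - d)(\alpha + d + n - 2) r^{\alpha - 2} Y_d$, iteration gives the indicial polynomial $\prod_{l=0}^k (\alpha - d - 2l)(\alpha + d + n - 2 - 2l)$. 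Separated solutions are therefore $r^\alpha Y_d$ with $\alpha \in \{d + 2l : 0 \le l \le k\} \cup \{-(d + n - 2) + 2l : 0 \le l \le k\}$, augmented by $\log r$ corrections at coinciding roots. The hypothesis $h = O(|x|^{1-\epsilon})$ eliminates all modes with $\alpha \ge 1$, leaving only the constant mode $(d,\alpha) = (0,0)$ together with decaying modes $\alpha < 0$.

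The crucial step is to show that $\delta h = 0$ eliminates all non-constant $d = 0$ and $d = 1$ modes. For a $d = 0$ mode $h_{ij} = c_{ij} r^\alpha$ with $\alpha \ne 0$, the calculation $\partial^i h_{ij} = \alpha c_{ij} x^i r^{\alpha - 2} = 0$ for all $x$ forces $c_{ij} = 0$; an analogous calculation handles any $\log r$ mode (whose divergence is proportional to $c_{ij} x^i / r^2$). For a $d = 1$ mode $h_{ij} = A_{ijk} x^k r^{\alpha - 1}$ with $A$ symmetric in $(i,j)$, computing $\partial^i h_{ij}$ and splitting the result into scalar spherical harmonic components of degrees $0$ and $2$ yields two linear conditions. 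The degree-$2$ piece forces the traceless part of the symmetrization $A_{(ik)j}$ in the first and third indices to vanish, so $A_{(ik)j} = c(j)\,\delta_{ik}$ for some vector $c(j)$. Combined with the degree-$0$ piece and the trace identities forced by $(i,j)$-symmetry, this gives $c \equiv 0$ for all decaying $\alpha$. Hence $A$ is antisymmetric in $(i,k)$; combining this with the $(i,j)$-symmetry yields the cyclic identity $A_{ijk} = -A_{jki} = A_{kij} = -A_{ijk}$, forcing $A \equiv 0$.

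After these eliminations, all surviving non-constant modes satisfy $d \ge 2$, and the slowest-decaying one occurs at $d = 2$, $l = k$ with $\alpha = -(n - 2k)$. This yields $h = h_c + O(|x|^{-(n-2k)})$ with $h_c$ constant. The case $k = 0$ with the additional hypothesis $\tr(h) = 0$ is handled analogously. The main obstacle is the tensor-algebra bookkeeping in the $d = 1$ case: correctly computing the divergence, cleanly separating it into its degree-$0$ and degree-$2$ scalar spherical harmonic components, tracking the three independent traces of $A_{ijk}$ (with the identities forced by $(i,j)$-symmetry), and combining the resulting linear conditions with the cyclic identity to conclude $A \equiv 0$. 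A secondary issue is verifying that $\log r$ modes arising in exceptional dimensions (e.g.\ $n = 4$, $k = 1$) are also eliminated by divergence-freeness.
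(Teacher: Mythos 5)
Your proof is correct and takes essentially the same route as the paper: expand each component of $h$ in modes $r^{\alpha}Y_{d}$ (equivalently, as the paper does in \eqref{hpex}--\eqref{eqlin6_2}, in homogeneous solutions of $\Delta^{k+1}$), discard the growing modes using $h=O(|x|^{1-\epsilon})$, and then kill the two most slowly decaying modes---the $d=0$ mode at homogeneity $2(k+1)-n$ and the $d=1$ mode at homogeneity $2k+1-n$---by means of $\delta h=0$. Your bookkeeping of the $d=1$ step, splitting $\delta h$ into its scalar spherical harmonic parts of degrees $0$ and $2$, is an equivalent but slightly cleaner reorganization of the coefficient-matching the paper carries out in \eqref{eqlin9}--\eqref{eqlin11}; both end with $A$ antisymmetric in its first and third indices and the cyclic identity.

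One small overstatement is worth flagging. For the \emph{harmonic} $d=1$ mode, $\alpha=1-n$, the degree-$0$ part of $\delta h$ carries the coefficient $n+\alpha-1=0$ and so imposes no constraint on $A_{iji}$, and the $(i,j)$-symmetry trace identities alone do \emph{not} force $c\equiv 0$: there genuinely exist nonzero divergence-free tensors of the form $A_{ijk}x^{k}r^{-n}$ with $A$ symmetric in its first two indices. Thus ``$c\equiv 0$ for all decaying $\alpha$'' and ``all surviving non-constant modes satisfy $d\ge 2$'' are not quite right as stated. This does not affect the proposition, because for $k\ge 1$ this harmonic mode decays like $r^{1-n}$, which lies strictly inside the claimed $O(r^{2k-n})$ error band; correspondingly, the paper only kills $h_{1}$, the $d=1$ mode at $\alpha=2k+1-n$, for which $n+\alpha-1=2k\ne 0$ and $\alpha-1=2k-n\ne 0$. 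You should just be aware that your $d=1$ argument relies on both of those nonvanishing, so it applies precisely to the mode you actually need to eliminate, not to every decaying $d=1$ mode.
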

\begin{proof} 
Consider first the case $1\le k\le\frac{n}{2}-2$.
Since 
the components of $h$ satisfy the scalar equation $\Delta^{k+1} f = 0$, 
using a classical expansion, we may write a 
solution $h$ of (\ref{eqlin5})-(\ref{eqlin6}) as 
\begin{align}
\label{hpex}
h(x)=h_{c}+\sum_{l=0}^{\infty}h_{l}(x),
\end{align}
where $h_{c}$ is constant and each $h_{l}$ is a homogeneous solution of 
(\ref{eqlin5})-(\ref{eqlin6}) of degree $2(k+1)-n-l$. If $h_{l}$ is one of 
such solutions, we can write
\begin{align}
h_{l}(x)
=|x|^{2(k+1)-n-l} \sum_{ (s,j) \in S_{k,l}} h_{s,j}(x),
\label{eqlin6_2}
\end{align}
where $S_{k,l} = \{(s,j): 2(j+1) - s = 2(k+1) -l, s \geq 0, 0 \leq j \leq k\}$, 
and the components of $h_{s,j}(x)$ in (\ref{eqlin6_2}) are spherical harmonics 
of degree $s$. Note that if $(s,j) \in S_{k,l}$, then 
$\Delta^{j+1} ( |x|^{2(k+1) - n - l} h_{s,j}(x)) = 0$. 
In order to prove the claim for $1\le k\le \frac{n}{2}-2$,
it suffices to show that $h_{0}=h_{1}\equiv 0$. For $l=0$, we have
\begin{align*}
(h_{0})_{ij}(x)=|x|^{2(k+1)-n}c_{ij},
\end{align*}
where $c_{ij}$ is constant. From  (\ref{eqlin6}) we obtain 
\begin{align*}
\sum_{i=1}^{n}c_{ij}x_{i}=0,
\end{align*}
for each $j$ and this clearly implies that $h_{0}$ is identically zero. For 
$h_{1}$ we have 
\begin{align*}
\left(h_{1}\right)_{ij}(x)=|x|^{2(k+1)-n}u_{ij}\left(\frac{x}{|x|^{2}}\right),
\end{align*}
where $u_{ij}(x)$ is a homogeneous polynomial of degree 1 that we will write as
\begin{align}
u_{ij}(x)=\sum_{l=1}^{n}A_{ijl}x_{l}.\label{eqlinmat}
\end{align}
From (\ref{eqlin6}) we have for every $j$
\begin{align*}
0=\sum_{i=1}^{n}\partial_{i}\left(|x|^{2(k+1)-n}u_{ij}(x\slash |x|^{2})\right)
=&-\sum_{i=1}^{n}(n-2k)x_{i}|x|^{2(k-1)-n}u_{ij}(x)\\
&+|x|^{2k-n}\sum_{i=1}^{n}\partial_{i}u_{ij}(x),
\end{align*}
which becomes
\begin{align}
\left(n-2k\right)\sum_{i=1}^{n}\sum_{l=1}^{n}A_{ijl}x_{i}x_{l}
=|x|^{2}\sum_{i=1}^{n}A_{iji}.\label{eqlin9}
\end{align}
For fixed $j$ and every $x\in\R^{n}$, with $x\ne 0$. If in (\ref{eqlin9}) 
we let $x$ be the vector with coordinates $x_{i}=\delta_{ip}$ for fixed $p$, 
one obtains the identity
\begin{align}
A_{pjp}=\frac{1}{n-2k}\sum_{l=1}^{n}A_{ljl}~\mbox{for fixed}~j.\label{eqlin10}
\end{align}
An obvious consequence of (\ref{eqlin10}) is that $A_{pjp}$ is independent of 
$p$ for fixed $j$, in particular, for every $p$ and $j$
\begin{align}
A_{pjp}=\frac{n}{n-2k}A_{pjp},
\end{align}
and then $A_{pjp}=0$ since $k\ne 0$. For the components of the form $A_{ljm}$ 
with $l\ne m $, the coefficient of $x_{l}x_{m}$ in the left-hand side of 
(\ref{eqlin9}) is $\left(n-2k\right)\left(A_{ljm}+A_{mjl}\right)$ while in 
the right-hand side there are no off-diagonal terms, so we conclude that  
\begin{align}
A_{ljm}=-A_{mjl}~\mbox{for}~l\ne m,\label{eqlin11}
\end{align}
If $l,j,m$ are all different we obtain from the symmetry of $A_{ljm}$ in $l,j$ 
and from (\ref{eqlin11}) the identity
\begin{align*}
A_{ljm}=-A_{mjl}=-A_{jml}=A_{lmj}=A_{mlj}= -A_{jlm}=-A_{ljm},
\end{align*}
therefore, in this case $A_{ljm}=0$. For the components of the form $A_{llj}$ 
and $A_{jll}$ when $l\ne j$, it is easy to see that $A_{llj}=-A_{ljl}$  and 
$A_{jll}=A_{ljl}$ and as we saw above this implies that both components are 
zero, so we conclude that all polynomials $u_{ij}$ are identically zero. 

For the case $k=\frac{n}{2}-1$ there is only one difference with the argument 
above: $h_0(x)$ is logarithmic, i.e., a solution of the form 
$h_{ij}(x)=\log(|x|)c_{ij}$ with $c_{ij}$ constant, however the condition 
$\delta h=0$ implies that $\sum_{j=1}^{n}c_{ij}x_{j}=0$ for every $i$ and 
hence $c_{ij}=0$ for all $i,j$ so this solution in fact does \textit{not} 
occur. 

For the case $k=1$ and $n=3$ we write $h$ as 
\begin{align}
h(x)=\sum_{l=0}^{\infty}h_{l}(x),
\end{align}

\noindent
where $h_{l}(x)$ is a homogeneous solution of degree $-l$ of $\Delta^{2}h(x)=0$ on $\R^{n}\backslash\{0\}$. In this case, the solution $h_{0}(x)$ has the form $h_{0}(x)=h_{C}+h_{0,1}(x)$
where the components of $h_{C}$ are constant and the components of $h_{0,1}$ are spherical harmonics of degree 1. The solutions $h_{l}(x)$ with $l\ge 1$  have the form $h_{l}(x)=|x|^{-l}\left(h_{l,l-1}(x)+h_{l,l+1}(x)\right)$ where the components of $h_{l,l\pm 1}(x)$ are spherical harmonics of degree $l\pm 1$. We only have to prove that if $\delta h_{0,1}(x)=0$ on $\R^{n}\backslash\{0\}$ then $h_{0,1}(x)\equiv 0$. For that purpose write the components of $h_{0,1}(x)$ as $(h_{0,1})_{ij}(x)=u_{ij}\left(\frac{x}{|x|}\right)$ where $u_{ij}(x)$ are linear functions given by (\ref{eqlinmat}). The condition $\delta h_{0,1}(x)=0$ for all $x\ne 0$ becomes
\begin{align*}
\sum_{i=1}^{3}\sum_{l=1}^{3}A_{ijl}x_{i}x_{l}=|x|^{2}\sum_{i=1}^{3}A_{iji},
\end{align*}
and we can argue as in the case $1\le k\le\frac{n}{2}-1$ to conclude that $A_{ijl}=0$ for all $i,j,l=1,2,3$. 

 The above proof can be extended to the case of $k =0$ provided 
the trace vanishes. However, we omit the proof, and instead refer the 
reader to the proof given in \cite[page 538]{ct}, which is 
an alternative argument using Obata's Theorem. 
\end{proof} 

\section{Nonlinear terms in the obstruction-flat systems}\label{rem}

\noindent
In this section we derive an expression for the error terms in the 
linearization of the (extended) obstruction tensors, i.e., the difference  
\begin{align}
&\Omega^{(k)}(g_{0}+h)-\Omega^{(k)}(g_{0})
-\left(\Omega^{(k)}\right)^{\prime}_{g_{0}}(h),
\end{align}
where $g_{0}$ is a flat metric in $\R^{n}$. 
Given two tensor fields $A,B$ by $A*B$ we mean a linear combination of contractions of $A\otimes B$ using the metric $g_{0}$, and  
for a positive integer $j$, $A^{-j}* B$ means contractions of $j$ copies of the 
inverse of $A$ with $B$.  

\begin{proposition}\label{proprem1}
Let $g_{0}$ be a flat metric on $\R^{n}$ and let $h\in S^{2}(T^{*}\R^{n})$
be such that $g_{0}+h$ is another Riemannian metric on $\R^{n}$. 
For the (extended) obstruction tensors $\Omega^{(k)}$ with 
$1\le k\le\frac{n}{2}-1$,   we have
\begin{align}
\begin{split}\label{eqrem10}
\Omega^{(k)}(g_{0}+h)
&=\left(\Omega^{(k)}\right)^{\prime}_{g_{0}}(h)- (g_{0}+h)^{-1} * h *\nabla_{g_0}^{2(k+1)}h\\
&+\sum_{j=2}^{\mathcal{I}_{k}}(g_{0}+h)^{-j}*\left(\sum_{\alpha_{1}+\ldots+\alpha_{j}=2(k+1)}
\nabla_{g_{0}}^{\alpha_{1}}h*\ldots*\nabla_{g_{0}}^{\alpha_{j}}h\right),
\end{split}
\end{align}
for some integer $\mathcal{I}_{k} > 2(k+1)$. For the scalar curvature we have, 
\begin{align}
\begin{split}\label{eqrem11_1}
R(g_{0}+h)&=R^{\prime}_{g_{0}}(h)+(g_{0}+h)^{-1}*h*\nabla_{g_{0}}^{2}h
+(g_{0}+h)^{-2}*\left( h*\nabla_{g_{0}}^{2}h + \nabla_{g_{0}}h*\nabla_{g_{0}}h\right)\\
&+(g_{0}+h)^{-3}*\left(\nabla_{g_{0}}h *\nabla_{g_{0}}h*h\right).
\end{split}
\end{align}
\end{proposition}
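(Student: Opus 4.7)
The plan is to compute both expansions by a direct coordinate calculation in Euclidean coordinates in which $g_{0}=\delta$. In such coordinates the Christoffel symbols of $g_{0}$ vanish, so $\nabla_{g_{0}}$ agrees with $\partial$, and every geometric quantity of $g_{0}+h$ becomes an explicit polynomial in the entries of $(g_{0}+h)^{-1}$ and the partial derivatives of $h$. The key book-keeping identity will be
\[
(g_{0}+h)^{-1} = g_{0}^{-1} - g_{0}^{-1}\,h\,(g_{0}+h)^{-1},
\]
which lets me peel off exactly the terms obtained by replacing every $(g_{0}+h)^{-1}$ by $g_{0}^{-1}$: the sum of those terms is the linearization, while every other term carries at least one explicit factor of $h$ multiplied against some $(g_{0}+h)^{-1}$.

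For \eqref{eqrem11_1}, I would start from $R = g^{ij}R_{ij}$ together with the coordinate formula $R_{ij} = \partial_{k}\Gamma^{k}_{ij}-\partial_{j}\Gamma^{k}_{ik} + \Gamma*\Gamma$, noting that each Christoffel symbol of $g_{0}+h$ has the schematic form $(g_{0}+h)^{-1}*\partial h$. Direct expansion gives $R(g_{0}+h) = (g_{0}+h)^{-2}*\partial^{2}h + (g_{0}+h)^{-3}*\partial h*\partial h$. Applying the identity above to each copy of $(g_{0}+h)^{-1}$ separates the linear piece $R^{\prime}_{g_{0}}(h)$ and expresses the remainder as the four groups displayed in \eqref{eqrem11_1}; the powers $-1,-2,-3$ record how many of the inverse-metric factors remain after the separation.

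For \eqref{eqrem10} I would apply the same mechanism together with the structural expansion \eqref{eqext2}: the leading part of $\Omega^{(k)}$ is a constant multiple of $\Delta^{k-1}B$, an operator of total derivative order $2(k+1)$, while the $l.o.t.$ are polynomial contractions of curvatures and their covariant derivatives of strictly smaller total derivative order. Applied to $g_{0}+h$, every such expression becomes a polynomial in $(g_{0}+h)^{-1}$ and in derivatives $\partial^{\alpha}h$ with $\sum\alpha = 2(k+1)$ in the leading piece (and strictly less in the lower order pieces). Peeling off one $g_{0}^{-1}$ from every inverse metric isolates the linear-in-$h$ term $(\Omega^{(k)})^{\prime}_{g_{0}}(h)$. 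Among the remaining, at-least-quadratic terms, the unique one in which all $2(k+1)$ derivatives fall on a single copy of $h$ arises from $\Delta^{k-1}B$ and produces the displayed $(g_{0}+h)^{-1}*h*\nabla_{g_{0}}^{2(k+1)}h$ contribution, while every other term is absorbed into the second sum with some uniform bound $\mathcal{I}_{k}>2(k+1)$ on the power of $(g_{0}+h)^{-1}$ that can appear.

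The hard part will be purely combinatorial: since $\Omega^{(k)}$ is not written out explicitly, I must argue that the schematic structure on the right-hand side of \eqref{eqrem10} is preserved under the operations (covariant differentiation and contraction with the metric) used to build $\Omega^{(k)}$ from Riemann. This is essentially automatic from the Leibniz rule once one observes that $\nabla_{g_{0}+h}$ differs from $\nabla_{g_{0}}=\partial$ by a tensor of type $(g_{0}+h)^{-1}*\partial h$, and that every contraction uses a factor of $(g_{0}+h)^{-1}$; the constant $\mathcal{I}_{k}$ is then just the maximum power of $(g_{0}+h)^{-1}$ produced after the full expansion.
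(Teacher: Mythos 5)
Your overall route coincides with the paper's: you build everything from the covariant derivative expansion $\nabla_{g_{0}+h}T = \nabla_{g_{0}}T + (g_{0}+h)^{-1}*\nabla_{g_{0}}h*T$, then use the peeling identity $(g_{0}+h)^{-1} - g_{0}^{-1} = -g_{0}^{-1}*h*(g_{0}+h)^{-1}$ to separate the linear piece from the quadratic-and-up remainder at the last step, exactly as the paper does. For \eqref{eqrem11_1} your sketch is correct and is the same calculation.

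For \eqref{eqrem10} there is, however, a genuine gap in your description of the lower order terms of $\Omega^{(k)}$. You describe them as ``polynomial contractions of curvatures and their covariant derivatives of strictly smaller total derivative order'' and then assert that, applied to $g_{0}+h$, the lower order pieces yield $\partial^{\alpha}h$ with $\sum\alpha$ strictly less than $2(k+1)$. That is false. By scaling invariance, \emph{every} term in $\Omega^{(k)}$ carries exactly $2(k+1)$ total derivatives of $h$ once it is schematically expanded around a flat background: the l.o.t.\ have the form $\nabla^{\alpha_{1}}Rm*\cdots*\nabla^{\alpha_{j}}Rm$ with $j\ge 2$ and $\alpha_{1}+\cdots+\alpha_{j}=2(k+1)-2j$, and since each $Rm$ contributes two derivatives of $h$, the total is $2j+\sum\alpha_{i}=2(k+1)$. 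What distinguishes the lower order terms is not a smaller total derivative count but the fact that they are at least quadratic in curvature, which is precisely what makes them at least quadratic in $h$ and prevents them from producing a single factor $\nabla^{2(k+1)}h$. This distinction matters: the second sum in \eqref{eqrem10} has the exact constraint $\sum\alpha_{i}=2(k+1)$, and the decay estimates later in the paper rely on that equality. If the remainder actually contained terms with $\sum\alpha_{i}<2(k+1)$ those estimates would fail. The paper closes this gap with a scaling argument (citing the proof of Theorem~2.1 in \cite{g2}), which pins down the precise form displayed above; you need that step rather than the ``strictly fewer derivatives'' heuristic.
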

\begin{proof} 
For any tensor $T$, we have
\begin{align}
\nabla_{g_{0}+h}T=\nabla_{g_{0}}T+(g_{0}+h)^{-1}*\nabla_{g_{0}} h*T.\label{eqrem0}
\end{align}
From this it follows that for the $(1,3)$ curvature tensor
\begin{align}
Rm(g_{0}+h)=(g_{0}+h)^{-1}*\nabla^{2}_{g_{0}}h
+(g_{0}+h)^{-2}*\nabla_{g_{0}} h*\nabla_{g_{0}} h.\label{eqrem3}
\end{align}
It follows that the $Ric(g_{0}+h)$ has an expansion similar to \eqref{eqrem3}. 
For the scalar curvature, on the other hand,  we have
\begin{align}
\begin{split}\label{eqrem3'}
R(g_{0}+h)&=\left(g_{0}+h\right)^{-1}*\left(Ric(g_{0}+h)\right)\\
&=\left(g_{0}+h\right)^{-1}*\left((g_{0}+h)^{-1}*\nabla^{2}_{g_{0}}h
+(g_{0}+h)^{-2}*\nabla_{g_{0}} h*\nabla_{g_{0}} h\right).
\end{split}
\end{align}
Using the identity
\begin{align}
(g_{0}+h)^{-1}-g_{0}^{-1}=-g_{0}^{-1}*h*(g_{0}+h)^{-1},\label{eqrem11}
\end{align}
we obtain  (we will omit writing $g_0^{-1}$ from now on)
\begin{align*}
R(g_{0}+h)&=(g_{0}+h)^{-1}*\nabla^{2}_{g_{0}}h+\left(g_{0}+h\right)^{-2}*\left(h*\nabla^{2}_{g_{0}}h+\nabla_{g_{0}}h*\nabla_{g_{0}}h\right)\\
&+(g_{0}+h)^{-3}*h*\nabla_{g_{0}}h*\nabla_{g_{0}}h,
\end{align*}
and another application of \eqref{eqrem11} to the leading term yields
\begin{align}
\begin{split}\label{sgh}
R(g_{0}+h)&= \nabla^2_{g_0} h  + (g_{0}+h)^{-1}* h *\nabla^{2}_{g_{0}}h
+\left(g_{0}+h\right)^{-2}*\left(h*\nabla^{2}_{g_{0}}h+\nabla_{g_{0}}h*\nabla_{g_{0}}h\right)\\
&+(g_{0}+h)^{-3}*h*\nabla_{g_{0}}h*\nabla_{g_{0}}h.\\
\end{split}
\end{align}
Since the only term in \eqref{sgh} that contributes to the linearization of $R$ at $g_{0}$ is $\nabla^{2}_{g_{0}}h$, equation \eqref{eqrem11_1} follows. In order to find a 
similar expansion for the Bach tensor $B(g_{0}+h)$ we note that for any metric $g$, 
$B(g)$ can be written schematically as
\begin{align}\label{bachg}
B(g)=\Delta_{g} Ric_g+\nabla^{2}_{g} R_g+\left(\Delta_{g}R_g\right)g+Rm_g*Rm_g,
\end{align}
We first consider the term 
$\Delta_{g}Ric_g$ at $g=g_{0}+h$ in~\eqref{bachg}.
Taking a covariant derivative, again assuming that $g_0$ is flat, we have 
\begin{align}
\begin{split}\label{eqrem6}
\nabla_{g_{0}+h}Ric(g_{0}+h)&=(g_{0}+h)^{-1}*\left(\nabla^{3}_{g_{0}}h\right)
+(g_{0}+h)^{-2}*\nabla_{g_{0}}^{2}h*\nabla_{g_{0}}h\\
&+(g_{0}+h)^{-3}*\nabla_{g_{0}}h*\nabla_{g_{0}}h*\nabla_{g_{0}}h.
\end{split}
\end{align}
Taking another covariant derivative and one contraction with 
respect to $g_{0}+h$, we obtain
\begin{align}
\begin{split}\label{eqrem7}
&\Delta_{g_{0}+h}Ric(g_{0}+h)
=(g_{0}+h)^{-2}*\left(\nabla^{4}_{g_{0}}h\right)\\
&+(g_{0}+h)^{-3}*\left(\nabla_{g_{0}}^{2}h*\nabla_{g_{0}}^{2}h
+\nabla^{3}_{g_{0}}h*\nabla_{g_{0}}h\right)\\
&+(g_{0}+h)^{-4}*\nabla^{2}_{g_{0}}h*\nabla_{g_{0}}h*\nabla_{g_{0}}h\\
&+(g_{0}+h)^{-5}
\left(\nabla_{g_{0}}h*\nabla_{g_{0}}h*\nabla_{g_{0}}h*\nabla_{g_{0}}h\right).
\end{split}
\end{align}
Using \eqref{eqrem11} twice (as we did above for $R$),  
we conclude that
\begin{align}
\begin{split}
\Delta_{g_{0}+h}Ric(g_{0}+h)&= \Delta_{g_{0}}Ric^{\prime}_{g_{0}}(h) 
+  (g_0 + h)^{-1} * h * \nabla^4_{g_0} h\\
&+\sum_{j=2}^{5}\sum_{\alpha_{1}+\ldots+\alpha_{j}=4}(g_{0}+h)^{-j}*\nabla_{g_{0}}^{\alpha_{1}}h*\ldots*\nabla_{g_{0}}^{\alpha_{j}}h.
\end{split}
\end{align}
From similar computations for the other terms in \eqref{bachg} we conclude that
\begin{align}
\begin{split}
B(g_{0}+h)&=B^{\prime}_{g_{0}}(h)+ (g_0 + h)^{-1} * h * \nabla^4_{g_0} h \\
&+ \sum_{j=2}^{6}\sum_{\alpha_{1}+\ldots+\alpha_{j}=4}(g_{0}+h)^{-j}*\nabla_{g_{0}}^{\alpha_{1}}h*\ldots*\nabla_{g_{0}}^{\alpha_{j}}h,
\end{split}
\end{align} 
For any $k\ge 1$, a similar argument shows that 
\begin{align}
\begin{split} \label{eqrem9}
\Delta^{k-1}_{g_{0}+h}B(g_0+h)&=\Delta_{g_0}^{k-1}B^{\prime}_{g_{0}}(h) 
+ (g_0 +h)^{-1} *h * \nabla^{2(k+1)}_{g_0}h \\
&+\sum_{j=2}^{\mathcal{I}_{k}}(g_{0}+h)^{-j}*\left(\sum_{\alpha_{1}+\ldots+\alpha_{j}=2(k+1)}
\nabla_{g_{0}}^{\alpha_{1}}h*\ldots*\nabla_{g_{0}}^{\alpha_{j}}h\right).
\end{split}
\end{align}
where $\mathcal{I}_{k}$ is some integer with $\mathcal{I}_{k} > 2(k+1)$.

Next, using scaling arguments it is clear that the terms 
$l.o.t.$ in (\ref{eqext2_1}) and (\ref{eqext2}) have the form
\begin{align}
\label{steqn}
\sum_{j=2}^{k+1}\left(\sum_{\alpha_{1}+\ldots+\alpha_{j}=2(k+1)-2j}
\nabla^{\alpha_{1}}Rm*\ldots*\nabla^{\alpha_{j}}Rm\right),
\end{align}
see for example the proof of Theorem 2.1 in \cite{g2}.
Equation (\ref{eqrem10}) follows 
from combining  \eqref{eqext2_1} or (\ref{eqext2}) in Subsection \ref{ext} 
with (\ref{eqrem9}), since the terms in \eqref{steqn} 
admit a similar expansion as the nonlinear terms in \eqref{eqrem9} (and
they do not affect the linearization). 
\end{proof}
Next, defining 
\begin{align}
c_{n,k}=\left\{
\begin{array}{ll}
\frac{1}{(4-n)(6-n)\ldots(2k-n)}&\mbox{if}~1\le k\le\frac{n}{2}-2\\
1& \mbox{if}~ k=\frac{n}{2}-1,
\end{array}
\right.
\end{align}
from Proposition \ref{proprem1}, we may write 
\begin{align}
\Omega^{(k)}(g_{0}+h) = c_{n,k} \Delta^{k-1} 
B_{g_{0}}^{\prime}(h)+F^{(k)}(h,g_{0}),\label{eqrem10_2}
\end{align}
where $F^{(k)}(h,g_{0})$ is the remainder in (\ref{eqrem10}). 
For the scalar curvature we will write 
\begin{align}
R(g_{0}+h) =R^{\prime}_{g_{0}}(h)+F^{\prime}(h,g_{0}),\label{eqrem10_4}
\end{align}
where $F^{\prime}(h,g_{0})$ is the error term in (\ref{eqrem11_1}). From now on, 
we will use $\nabla$ to denote $\nabla_{g_{0}}$, therefore all operators 
$\Delta$, $\delta$, $\tr$ are taken with respect to $g_{0}$. 
\subsection{Scalar-flat condition and the modified equation}\label{sfm}
In order to address the difficulty of not initially being able to prescribe $h$ to be 
divergence-free, 
we follow \cite{ct} and introduce a \emph{modified divergence operator} given 
by 
\begin{align}\label{deltat}
\delta_{t}h=\delta h-ti_{r^{-1}\frac{\partial}{\partial r}}h,
\end{align}
and we will 
show in Section \ref{df} that we can find a gauge where $\delta_{t}h=0$. 
A difference with the approach in \cite{ct} is that the obstruction-flat 
systems are \emph{not} elliptic even if we are able to prescribe $\delta_{t}h=0$ 
because $\Delta^{k-1}B^{\prime}_{g_{0}}(h)$ is traceless 
regardless of the gauge condition. Note that if $\delta_{t}h=0$ we obtain
\begin{align}
\begin{split}\label{eqsfm1}
\Delta^{k-1}B^{\prime}_{g_{0}}(h)=\Delta^{k-1}&\left(-\frac{1}{2(n-2)}\Delta^{2}h
-\frac{1}{2(n-1)(n-2)}\nabla^{2}\Delta\tr(h)\right.\\
&\left.-\frac{t}{2(n-1)}\nabla^{2}\delta i_{r^{-1}\frac{\partial}{\partial r}} h
-\frac{t}{(n-2)}\Delta\delta^{*}i_{r^{-1}\frac{\partial}{\partial r}}h\right.\\
&\left.+\frac{1}{2(n-2)(n-1)}\left(\Delta^{2}\tr(h)
-t\Delta\delta\left(i_{r^{-1}\frac{\partial}{\partial r}}h\right)\right)g_{0}
\right).
\end{split}
\end{align}
\noindent
At this point we use the scalar-flat condition on $g_{0}+h$. Assuming again 
that $\delta_{t}h=0$, the linearization of the scalar curvature at $g_{0}$ becomes
\begin{align}
\label{eqsfm2}
R^{\prime}_{g_{0}}(h)=  -\Delta\tr(h) + \delta \delta h = -\Delta\tr(h)
+t\delta i_{r^{-1}\frac{\partial}{\partial r}}h,
\end{align}
so from the scalar-flat equation we have
\begin{align}
\label{eqsfm3}
\Delta\tr(h)=t\delta i_{r^{-1}\frac{\partial}{\partial r}}h+F^{\prime}(h,g_{0}),
\end{align}
where $F^{\prime}(h,g_{0})$ is the remainder in (\ref{eqrem10_4}). 
Inserting (\ref{eqsfm3}) into (\ref{eqsfm1}) we obtain
\begin{align}
\begin{split} \label{eqsfm4}
\Delta^{k-1}B^{\prime}_{g_{0}}(h)=\Delta^{k-1}&\left(-\frac{1}{2(n-2)}\Delta^{2}h
-\frac{t}{2(n-2)}\nabla^{2}\delta i_{r^{-1}\frac{\partial}{\partial r}}h\right.\\
&\left.-\frac{t}{(n-2)}\Delta\delta^{*} i_{r^{-1}\frac{\partial}{\partial r}}h\right)
+\mathcal{E}^{(k)}(h,g_{0}),
\end{split}
\end{align}
where $\mathcal{E}^{(k)}(h,g_{0})$ is given by
\begin{align}
\label{eqsfm5}
\mathcal{E}^{(k)}(h,g_{0})
=\frac{1}{2(n-1)(n-2)} \Big( -\Delta^{k-1}\nabla^{2}F^{\prime}(h,g_{0})
+ \Delta^{k}F^{\prime}(h,g_{0})g_{0} \Big).
\end{align}
We now define a linear operator $\mathcal{P}^{(k)}_{t}$ by 
\begin{align}
\label{eqsfm6}
\mathcal{P}^{(k)}_{t}(h)=\frac{c_{n,k}}{n-2}
\Delta^{k-1}\left(-\frac{1}{2}\Delta^{2}h
-\frac{t}{2}\nabla^{2}\delta i_{r^{-1}\frac{\partial}{\partial r}}h
- t \Delta\delta^{*}i_{r^{-1}\frac{\partial}{\partial r}}h\right).
\end{align}
Clearly, the operator $\mathcal{P}^{(k)}_{t}$ is strictly elliptic.
From (\ref{eqrem10})
and (\ref{eqsfm4}), if $\delta_{t}h=0$ and  $g_{0}+h$ 
is scalar-flat, the (extended) obstruction-flat 
system may be written as
\begin{align}
0=\mathcal{P}^{(k)}_{t}h
+\mathcal{R}^{(k)}(h,g_{0})~\mbox{for}~1\le k\le\frac{n}{2}-1,\label{eqsfm7}
\end{align} 
where  
\begin{align*}
\mathcal{R}^{(k)}(h,g_{0})=c_{n,k}\mathcal{E}^{(k)}(h,g_{0})+F^{(k)}(h,g_{0}).
\end{align*}
Writing $\mathcal{E}^{(k)}(h,g_{0})$ schematically as 
$\nabla^{2k}F^{\prime}(h,g_{0})$, we easily see using 
the proof of Proposition \ref{proprem1} that $\mathcal{R}^{(k)}(h,g_{0})$ has 
the same form as the remainder in \eqref{eqrem10}
\begin{align}
\begin{split}\label{eqsfm7_1}
& \mathcal{R}^{(k)}(h,g_{0}) =(g_{0}+h)^{-1}*h*\nabla_{g_{0}}^{2(k+1)}h\\
&+\sum_{j=2}^{\mathcal{I}_k}(g_{0}+h)^{-j}*\left(\sum_{\alpha_{1}+\ldots+\alpha_{j}=2(k+1)}
\nabla^{\alpha_{1}}_{g_{0}}h*\ldots *\nabla^{\alpha_{j}}_{g_{0}}h\right),
\end{split}
\end{align}
so that none of the terms in $\mathcal{R}^{(k)}(h,g_{0})$ contribute to the 
linearization. This shows that 
(\ref{eqsfm7}) defines a family of elliptic equations on 
$\R^{n}\backslash\{0\}$. This same argument applies equally to the system 
\eqref{eqsrt11in}. We have proved
\begin{corollary}\label{corsfm1}
Let $1\le k\le \frac{n}{2}-1$. If $g=g_{0}+h$ is scalar-flat and $\Omega^{(k)}$-flat 
with $\delta_{t}h=0$, then $h$ satisfies
\begin{align}
\mathcal{P}^{(k)}_{t}h+\mathcal{R}^{(k)}(h,g_{0})
=0,
\label{eqsfm10}
\end{align}
where $\mathcal{P}_{t}^{(k)}$ is the linear operator given by (\ref{eqsfm6}) and $\mathcal{R}^{(k)}(h,g_{0})$ is the error term given by \eqref{eqsfm7_1}. 
The operator $\mathcal{P}^{(k)}_{t}$  is strictly elliptic.

 Let $k = 1$ if $n = 3$, or 
$1\le k \le\frac{n}{2}-1$ if $n \geq 4$.
If $g=g_{0}+h$ is scalar-flat and solves \eqref{eqsrt11in} with $\delta_{t}h=0$, 
then $h$ also satisfies \eqref{eqsfm10} with a remainder term of the same form.   
\end{corollary}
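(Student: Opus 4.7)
The plan is to assemble the computations of Section \ref{sfm} into a single elliptic system. There are three steps: rewrite the nonlinear equation using Proposition \ref{proprem1}, use the gauge and scalar-flat condition to convert the (degenerate) linearization into a strictly elliptic operator plus a higher-order nonlinearity, and finally verify ellipticity from the principal symbol.

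For the first statement, I would start with \eqref{eqrem10_2}, which rewrites $\Omega^{(k)}(g_0+h)=0$ as
\[
c_{n,k}\Delta^{k-1}B'_{g_0}(h) + F^{(k)}(h,g_0) = 0.
\]
I then expand $\Delta^{k-1}B'_{g_0}(h)$ using \eqref{eqlin1} and apply the gauge condition $\delta h = t\, i_{r^{-1}\partial_r}h$ to arrive at \eqref{eqsfm1}. The key trick, mirroring \cite{ct}, is to use the scalar-flat identity \eqref{eqsfm3} to substitute for $\Delta \tr(h)$ wherever it appears. What remains is \eqref{eqsfm4}, and collecting all nonlinear pieces into $\mathcal{R}^{(k)}(h,g_0) := c_{n,k}\mathcal{E}^{(k)}(h,g_0) + F^{(k)}(h,g_0)$ yields the desired equation \eqref{eqsfm10}. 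To check that $\mathcal{R}^{(k)}$ fits the schematic template \eqref{eqsfm7_1}, one observes that $\mathcal{E}^{(k)}$ is essentially $\nabla^{2k}$ applied to $F'(h,g_0)$, and by the proof of Proposition \ref{proprem1} both $F^{(k)}$ and $\nabla^{2k}F'$ are sums of $(g_0+h)^{-j}$ multiplied by products $\nabla^{\alpha_1}h * \cdots * \nabla^{\alpha_j}h$ whose derivative orders sum to $2(k+1)$.

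Ellipticity of $\mathcal{P}^{(k)}_t$ follows directly from its definition: the only top-order term in \eqref{eqsfm6} is $-\tfrac{c_{n,k}}{2(n-2)}\Delta^{k+1}h$, since the gauge-correction terms $\nabla^2\delta i_{r^{-1}\partial_r}h$ and $\Delta\delta^* i_{r^{-1}\partial_r}h$ involve at most $2k+1$ derivatives of $h$. Hence the principal symbol at $\xi\neq 0$ is the scalar
\[
\sigma(\mathcal{P}^{(k)}_t)(\xi) = -\frac{c_{n,k}}{2(n-2)}|\xi|^{2(k+1)}\,\mathrm{Id}_{S^2(T^*\R^n)},
\]
which is invertible since $c_{n,k}\neq 0$ throughout the allowed range of $k$.

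The second statement, for system \eqref{eqsrt11in}, follows by an identical argument: the left-hand side linearizes to $\Delta^k_{g_0} Ric'_{g_0}(h)$, whose leading part is $-\tfrac{1}{2}\Delta^{k+1}h$ plus lower-order terms involving $\Delta\tr(h)$ and $\delta h$ that are absorbed exactly as before. The right-hand side of \eqref{eqsrt11in} is a sum of nonlinear curvature products with total derivative count $2(k+1)$, so iterating \eqref{eqrem3} as in Proposition \ref{proprem1} shows it fits the template \eqref{eqsfm7_1}. Since the statement is essentially a summary of the preceding subsection, the only real obstacle is bookkeeping: verifying that the gauge substitutions and the scalar-flat substitution indeed strictly lower the order of every offending piece, so that the principal symbol of the operator is undisturbed by the commutator and remainder terms. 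This is handled uniformly by the derivative-counting/scaling argument already used in Proposition \ref{proprem1}.
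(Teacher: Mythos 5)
Your proposal follows the paper's argument essentially step for step: rewrite via Proposition \ref{proprem1} and \eqref{eqrem10_2}, impose the gauge $\delta_t h=0$ to get \eqref{eqsfm1}, substitute the scalar-flat identity \eqref{eqsfm3} to eliminate $\Delta\tr(h)$, collect nonlinearities into $\mathcal{R}^{(k)} = c_{n,k}\mathcal{E}^{(k)}+F^{(k)}$, and check the schematic form \eqref{eqsfm7_1} by counting derivative orders. The only addition you make beyond the paper's exposition is writing out the principal symbol explicitly to justify "strictly elliptic," which is sound (the gauge-correction terms are of order at most $2k+1<2(k+1)$, and $c_{n,k}\neq 0$), though you might note the operator lives on $\R^n\setminus\{0\}$ owing to the singular coefficient $r^{-1}\partial_r$.
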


\section{Weighted H\"{o}lder and Sobolev Spaces}
In this section we introduce weighted spaces that will be useful
in the analysis needed to construct divergence-free gauges.
We start by 
reviewing some of the notation in \cite{ct}. 
If we write $\R^{n}$ as $\R^{n}=C(S^{n-1})$, we define maps 
$\psi_{a}:C(S^{n-1})\rightarrow C(S^{n-1})$ for $a>0$ given by 
$\psi_{a}(r,x)=(ar,x)$.
The weighted H\"{o}lder norms are defined as follows: 
if $u>0$, let $A_{u}$ be the natural scaling on tensors of type $(p,q)$, i.e.
\begin{align}
A_{u}=\underbrace{(\psi_{u^{-1}})_{*}\otimes\cdots\otimes(\psi_{u^{-1}})_{*}}_{p}\otimes\underbrace{\psi_{u}^{*}\otimes\cdots\otimes\psi_{u}^{*}}_{q}.\label{eqwh1}
\end{align}
Given a tensor $T$ of type $(p,q)$ we have
\begin{align}
|T_{(u,x)}|_{m,\alpha;0}=|u^{p-q}A_{u}T_{(1,x)}|_{m,\alpha;0},\label{eqwh2}
\end{align}
where $|\cdot|_{m,\alpha;0}$ is the $C^{m,\alpha}$-norm with 
respect to the flat metric $g$ at the point $(1,x)$. We can now define H\"{o}lder norms by 
\begin{align}
|T|_{m,\alpha;0}=\sup_{(u,x)}|T_{(u,x)}|_{m,\alpha;0},\label{eqwh3}
\end{align}
and also weighted H\"{o}lder norms given by
\begin{align}
|T|_{m,\alpha;l}=|r^{-l}T|_{m,\alpha;0},\label{eqwh4}
\end{align}
for any $l \in \mathbb{R}$. 
We say that a tensor $T$ of type $(p,q)$ is in $\ten^{p,q}_{m,\alpha;l}$ 
if $|T|_{m,\alpha;l}<\infty$. 
We use $A_{c,d}(0)$ to denote the annulus 
$A_{c,d}(0)=\{(r,x)|c<r<d\}$. 
For a tensor $h$ of type $(u,v)$ and 
$\delta\in\R$ we define a weighted $L^{p}$ norm by
\begin{align}
\|h\|_{{L^{\prime}}^{p,u,v}_{\delta}}
=\left(\int_{\R^{n}}|h|^{p}|x|^{-\delta p-n}dx\right)^{\frac{1}{p}},\label{eqws1}
\end{align}
where $|\cdot|$ is the usual pointwise norm on tensors of type $(u,v)$. 
For any nonnegative integer $m$, we also define weighted Sobolev norms by
\begin{align}
\|h\|_{{W^{\prime}}^{m,p,u,v}_{\delta}}
=\sum_{j=0}^{m}\|\nabla^{j}h\|_{{L^{\prime}}^{p,u,v}_{\delta-j}},\label{eqws2}
\end{align}
and then ${W^{\prime}}^{m,p,u,v}_{\delta}$ is the space 
\begin{align}
{W^{\prime}}^{m,p,u,v}_{\delta}
=\left\{h:\|h\|_{{W^{\prime}}^{m,p,u,v}_{\delta}}<\infty\right\}.\label{eqws2_1}
\end{align}
For further properties of these weighted Sobolev spaces see~\cite{bar}. 

For notational convenience, if $h$ is a symmetric $(0,2)$-tensor 
we will use $|||h|||_{a,b}$ to denote 
the norm $\|h\|_{L^{2,0,2}_{0}(A_{a,b}(0))}$, i.e., the $L^{2}$ norm of $h$ with 
weight $0$ on the annulus $A_{a,b}(0)$. Another way to construct the norm 
$|||\cdot|||$ is as follows: consider the weighted inner product on the 
slices $(r,S^{n-1})$ given by
\begin{align}
\langle\langle h_{1},h_{2}\rangle\rangle=r^{-(n-1)}\int_{(r,S^{n-1})}
\langle h_{1},h_{2}\rangle dV_{g_{S^{n-1}(r)}},\label{eqdble}
\end{align}
where $\langle\cdot,\cdot,\rangle$ is the usual pointwise inner product. It follows that
\begin{align}
|||h|||^{2}_{a,b}=\int_{a}^{b}r^{-1}\|h\|^{2}dr,
\end{align}
where $\|\cdot\|$ is the norm defined by the inner product 
$\langle\langle\cdot,\cdot\rangle\rangle$ in (\ref{eqdble}). 
The norms $|||\cdot|||$ are scale invariant in the sense that 
if $h$ is a $(0,2)$ tensor and if we let $q=a^{-2}\psi_{a}^{*}h$ then 
\begin{align}
|||h|||_{a,aL}=|||q|||_{1,L}.
\end{align}
Finally, we say that 
a $(p,q)$-tensor $T$ is {\em{radially parallel}} if 
\begin{align}
\nabla_{\frac{\partial}{\partial r}} T = 0. 
\end{align}
\subsection{Divergence and the Lie derivative operator}\label{divlie}
We now consider the operator $\Box:\Lambda^{1}(\R^{n})\rightarrow\Lambda^{1}(\R^{n})$ 
defined by $\Box \xi=\delta L_{\xi}g_{0}$, where $L_\xi$ is the 
Lie derivative operator. This operator is 
formally self-adjoint and elliptic. From now on we use 
 $\tilde{\Delta}_{H}$, $\tilde{\tr}$,$\tilde{\delta}$ and $\tilde{d}$ to denote the Hodge laplacian, trace, divergence and exterior differentiation  in the cross section metric $g_{S^{n-1}}$ respectively. Following  \cite[Section 2]{ct}, if we write $\xi$ in polar coordinates as
\begin{align}\label{1formpolar}
\xi=fdr+\omega,
\end{align}
we have
\begin{align}\label{liederivative}
L_{\xi}g_{0}=2f^{\prime}dr\otimes dr+\left(\omega^{\prime}-2r^{-1}\omega
+\tilde{d}f\right)\boxtimes dr+L_{\omega}g_{S^{n-1}}+2rfg_{S^{n-1}},
\end{align}
here we use primes to denote differentiation respect to $r$. Also, given a 1-form $\alpha$ 
we denote by $\alpha\boxtimes dr$ the symmetric product $\alpha\otimes dr+dr\otimes\alpha$. 
If now $h$ is a symmetric $(0,2)$-tensor written in polar coordinates  as
\begin{align}
h=h_{00}dr\otimes dr+\alpha\boxtimes dr+B,
\end{align}
where $B$ is a symmetric $(0,2)$ tensor whose radial components are zero, the divergence of $h$ is given by
\begin{align}
\begin{split}
\label{divergence}
\delta h &= \left( h^{\prime}_{00} +(n-1)r^{-1}h_{00}+r^{-2}\tdel\alpha
-r^{-3}\tilde{\tr}(B) \right) dr\\
& \ \ \ \ \ \ + \alpha^{\prime}+ (n-1)r^{-1}\alpha+r^{-2}\tdel B.
\end{split}
\end{align}
Combining (\ref{liederivative}) and (\ref{divergence}), the operator $\Box$ takes the form
\begin{align}
\begin{split}
\label{boxpolar}
\Box \xi&=\left(2f^{\prime\prime}+2(n-1)r^{-1}f^{\prime}+r^{-2}\left(-\tlaph f -2(n-1)f\right)+r^{-2}\tdel\omega^{\prime}-4r^{-3}\tdel\omega\right)dr\\
& \ \ \ \ \ \ + \omega^{\prime\prime}+(n-3)r^{-1}\omega^{\prime}
+r^{-2}\left(-\tlaph\omega+\td\tdel\omega\right)+\td f^{\prime}+r^{-1}(n+1)\td f.
\end{split}
\end{align}
As pointed out in \cite[Section 2]{ct}, 
any 1-form defined on $\R^{n}\backslash\{0\}$ can be written as 
an infinite sum of forms of two types
\begin{enumerate}
\item Type I: $p(r)\psi$ where $\psi\in\Lambda^{1}(T^*S^{n-1})$, 
$\tilde{\delta}\psi=0$ and $\tilde{\Delta}_{H}\psi=\mu\psi$,
\item Type II: $r^{-1}l(r)\phi dr+u(r)r\tilde{d}\phi$ where 
$\phi\in\Lambda^{0}(T^*S^{n-1})$ and $\tilde{\Delta}_{H}\phi=\nu\phi$. 
\end{enumerate}
Moreover, the operator $\Box$ preserves these two types of forms. If $\xi$ is a 1-form of type I or II, the equation $\Box\xi=0$ reduces to solving a second order linear system of ordinary differential equations of at most two equations. In order to see what these systems look like we consider the change of variable $r=e^{s}$ and use $p(s)$ to denote $p(e^{s})$ for forms of type I and we use $l(s)$, $u(s)$  to denote $l(e^{s})$ and $u(e^{s})$ respectively for forms of type II. With this notation we have for example
\begin{align}
p^{\prime}=e^{-s}\partial_{s}p,~p^{\prime\prime}=e^{-2s}\left(\partial^{2}_{s}p-\partial_{s}p\right).
\end{align} 
On forms of type I, $\Box\xi$ is given by
\begin{align}\label{boxtype1}
\Box\xi=e^{-2s}\left(\partial^{2}_{s}p+(n-4)\partial_{s}p-\mu p\right)\psi,
\end{align}   
while on forms of type II, $\Box\xi$ takes the form
\begin{align}
\begin{split}
\label{boxtype2}
\Box\xi&=e^{-2s}\left(2\partial_{s}^{2}l+2(n-4)\partial_{s}l-4(n-2+\frac{\nu}{4})l-\nu\partial_{s}u+4\nu u\right)\phi  ds\\
& \ \ \ \ \ \ \ + e^{-2s}\left(\partial^{2}_{s}u+(n-4)\partial_{s}u-2\nu u+\partial_{s}l+nl\right)\td\phi.
\end{split}
\end{align}
The solutions of (\ref{boxtype1}) and (\ref{boxtype2}) are given by the following  
\begin{align}
\label{eqws5}
\alpha = \frac{4-n}{2}, \
\theta=\sqrt{\alpha^{2}+\mu}, \ 
a^{\pm}=\alpha\pm\theta.
\end{align}
All solutions of $\Box \xi=0$ of type I are given by
\begin{align}
\xi=r^{a^{\pm}}\psi.\label{eqws6}
\end{align}
In this case, since $r\psi$ is radially parallel, we see that 
the order of growth of $\xi$ is $a^{\pm}-1$. 
For solutions $\xi$ of type II, we set
\begin{align}\label{eqws9}
\beta=\frac{2-n}{2}, \ 
\omega=\sqrt{\beta^{2}+\nu}, \
b^{\pm}=\beta\pm\omega,
\end{align}
and then $\xi$ is either of the form
\begin{align}
r^{b^{\pm}}\tilde{d}\phi+b^{\pm}r^{b^{\pm}-1}\phi dr,\label{eqws10}
\end{align} 
or of the form
\begin{align}
2r^{b^{\pm}+2}\tilde{d}\phi+b^{\mp}r^{b^{\pm}+1}\phi dr.\label{eqws11}
\end{align}
See \cite[Section 2]{ct} for more details. The above computations 
motivate the following definition
\begin{definition}\label{defws1}{\em
The set $E$ of all numbers $a^{\pm}-1$, $b^{\pm}-1$ or $b^{\pm}+1$ with $a^{\pm}$, 
$b^{\pm}$ defined by \eqref{eqws5}, \eqref{eqws9}, is called the set of 
\emph{exceptional values for} $\Box$. If $\gamma\in\R\backslash E$ then $\gamma$ 
is said to be \emph{nonexceptional}.}
\end{definition}

\begin{remark}{\em
All elements in $E$ are integers, in fact, computing the eigenvalues 
of $\tilde{\Delta}_{H}$ on 1-forms in $S^{n-1}$ as in 
\cite[Theorem C]{foll}, one can prove that all numbers in (\ref{eqws6}) have the form
\begin{align}\label{exceptional1}
a^{\pm}_{j}=-\frac{(n-4)}{2}\pm\frac{1}{2}\left(n-2+2j\right)~\mbox{for}~j=1,2,\ldots,
\end{align}
and all numbers in \eqref{eqws9} have the form
\begin{align}\label{exceptional2}
b^{\pm}_{j}=-\frac{(n-2)}{2}\pm\frac{1}{2}\left(n-2+2j\right)~\mbox{for}~j=0,1,2,\ldots.
\end{align}
}
\end{remark}
An important property of $\Box$ is

\begin{proposition}\label{propws1} On $\mathbb{R}^n \setminus \{0\}$, 
if $\gamma$ is nonexceptional then 
$\Box:{W^{\prime}}^{2,p,0,1}_{\gamma}\rightarrow {W^{\prime}}^{0,p,0,1}_{\gamma-2}$ 
is an isomorphism with bounded inverse.
\end{proposition}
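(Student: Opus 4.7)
The plan is to reduce the PDE for $\Box\xi=\eta$ on $\mathbb{R}^n\setminus\{0\}$ to a family of ODEs by using the type I/II decomposition recalled from \cite[Section 2]{ct}. Any $1$-form $\xi$ on $\mathbb{R}^n\setminus\{0\}$ admits an expansion on each sphere $\{r\}\times S^{n-1}$ in terms of coclosed eigenforms $\psi$ of $\tlaph$ (type I) and eigenfunctions $\phi$ of $\tlaph$ together with their exterior derivatives (type II). Since $\Box$ preserves this decomposition, the problem decouples into a countable family of problems: a scalar equation for each type I eigenspace governed by $\partial_s^{2}p+(n-4)\partial_s p-\mu p$ with $s=\log r$, and a $2\times 2$ constant-coefficient system for each type II eigenspace coming from \eqref{boxtype2}. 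Switching to $s=\log r$ turns both into linear constant-coefficient ODEs, which is the source of the scale-invariant nature of the problem.

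For each of these ODEs I would write down the characteristic exponents: $a^{\pm}=\alpha\pm\theta$ for type I and $b^{\pm}=\beta\pm\omega$ for type II, and observe that the resulting homogeneous solutions for $\xi$ grow at rates $a^{\pm}-1$, $b^{\pm}-1$, and $b^{\pm}+1$. These are exactly the numbers collected in the set $E$ of exceptional values of Definition~\ref{defws1}. Injectivity of $\Box$ on ${W'}^{2,p,0,1}_{\gamma}$ for nonexceptional $\gamma$ is then immediate: any $\xi$ with $\Box\xi=0$ is, eigencomponent by eigencomponent, a linear combination of homogeneous solutions; each nonzero such component has growth in $E$, and since $\gamma\notin E$ none of these components can belong to ${W'}^{2,p,0,1}_{\gamma}$, forcing $\xi=0$.

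For surjectivity, given $\eta\in{W'}^{0,p,0,1}_{\gamma-2}$ I would build a right inverse eigencomponent by eigencomponent by variation of parameters (equivalently, by the Mellin transform, whose symbol is the indicial polynomial associated to $\Box$). The weight condition $\gamma\notin E$ means the Mellin symbol is invertible on the line $\mathrm{Re}(\zeta)=-\gamma-n/p$, and one selects the particular solution whose growth sits on the correct side of each indicial root. The resulting $\xi$ lies in the correct weighted space at each eigenlevel, and the main technical step is to sum these pieces and control the ${W'}^{2,p,0,1}_{\gamma}$ norm of $\xi$ by the ${W'}^{0,p,0,1}_{\gamma-2}$ norm of $\eta$.

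That last uniform estimate is where I expect the real work to lie. One has to argue that, as the spherical eigenvalues $\mu,\nu\to\infty$, the characteristic exponents separate at rate $\sqrt{\mu},\sqrt{\nu}$ and the Green's functions of the ODEs obey bounds that, when combined with interior elliptic regularity for the scale-invariant operator $\Box$, yield the full weighted $W^{2,p}$-estimate $\|\xi\|_{{W'}^{2,p,0,1}_{\gamma}}\leq C\|\eta\|_{{W'}^{0,p,0,1}_{\gamma-2}}$. In practice this is exactly the Lockhart--McOwen/Bartnik framework for asymptotically conical elliptic operators, and I would invoke it (using the weighted space formalism of \cite{bar} already recalled above) rather than reproduce the estimates from scratch; given injectivity, surjectivity, and the a priori estimate, the open mapping theorem then upgrades $\Box$ to an isomorphism with bounded inverse.
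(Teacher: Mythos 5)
Your outline is essentially the same approach as the paper, which disposes of the proposition in one line by citing \cite[Theorem~1.7]{bar}; what you have written is a (correct, if terse) unpacking of the separation-of-variables/Mellin content behind that citation, and you yourself fall back on the Bartnik/Lockhart--McOwen framework for the uniform estimate at the end. Two small remarks: on the full punctured space $\mathbb{R}^n\setminus\{0\}$ a homogeneous solution is never in ${W'}^{2,p,0,1}_{\gamma}$ for \emph{any} $\gamma$ (integrability fails at $0$ or at $\infty$), so injectivity is automatic and the nonexceptionality of $\gamma$ is really needed for the Fredholm/surjectivity part, not the kernel; and in the paper's (Bartnik) weight convention, where $|x|^{-\delta p - n}$ already absorbs the $n/p$, the invertibility condition is simply $\gamma\notin E$ rather than a line shifted by $n/p$.
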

\begin{proof} Compare \cite[Theorem 1.7]{bar}.
\end{proof}
Finally, note that from (\ref{exceptional1}) and (\ref{exceptional2}) it follows that $1$ is an exceptional value for $\Box$, which means that there are elements in the kernel of $\Box$ with linear growth, i.e., forms $\xi$ with $\Box\xi=0$ satisfying $\xi=r\eta$ where $\eta$ is radially parallel.

All 1-forms of type I in the kernel of $\Box$ which have linear growth have the form 
\begin{align}\label{killing}
\xi=r^{2}\psi,
\end{align} 
where $\psi$ is dual to a Killing field in $S^{n-1}$. For forms of type II, all solutions of $\Box\xi=0$ that have linear growth correspond to the eigenvalues $\nu=0,2n$ of $\tilde{\Delta}_{H}$ on functions, moreover, in that case the solution corresponding to $\nu=0$ is 
\begin{align}\label{confkilling}
\xi=rdr,
\end{align}
and the solution corresponding to $\nu=2n$ is
\begin{align}\label{sph2}
\xi=2r\phi\tilde dr+r^{2}\tilde{d}\phi.
\end{align}
Note that the forms in (\ref{killing}) and (\ref{sph2}) have linear growth because $r\psi$ and $r\tilde{d}\phi$ are radially parallel.

\subsection{A modified $\Box$ operator}\label{modbox}

Given $t\ne 0$, let $\Box_{t}$ be the modified operator 
\begin{align}
\Box_{t}\xi \equiv \delta_t L_{\xi}g_{0} 
=\Box \xi -ti_{r^{-1}\frac{\partial}{\partial r}}L_{\xi}g_{0}. 
\end{align}
In order to compute $\Box_{t}$ for $\xi$ as in (\ref{1formpolar}) we start 
by noting that 
\begin{align*}
i_{r^{-1}\frac{\partial}{\partial r}}L_{\xi}g_{0}=2r^{-1}f^{\prime}dr +r^{-1}\left(\omega^{\prime}-2r^{-1}\omega+\td f\right).
\end{align*}
The modified operator $\Box_{t}$ is then computed to be
\begin{align*}
\Box_{t}\xi&=
\left(2f^{\prime\prime}+2(n-1-t)r^{-1}f^{\prime}+r^{-2}\left(-\tlaph f-2(n-1)f\right)\right)dr\\
& \ \ \ \ + r^{-2}\left(\tdel\omega^{\prime}-4r^{-3}\tdel\omega\right)dr+
\omega^{\prime\prime}+(n-3-t)r^{-1}\omega^{\prime}\\
& \ \ \ \ +r^{-2}\left(-\tlaph\omega
+2t\omega+\td\tdel\omega\right)+
\td f^{\prime}+r^{-1}(n+1-t)\td f.
\end{align*}
Using again the change of variable $r=e^{s}$ and the notation in Section \ref{divlie}, we see that $\Box_{t}\xi$ for $\xi$ a 1-form of type I is given by
\begin{align}\label{boxttype1}
\Box_{t}\xi=e^{-2s}\left(\partial^{2}_{s}p+(n-4-t)\partial_{s}p-(\mu-2t)p\right)\psi.
\end{align}   
On forms of type II, $\Box_{t}\xi$ is given by
\begin{align}
\begin{split}
\label{boxttype2}
\Box_{t}\xi&=e^{-2s}\left(2\partial_{s}^{2}l+2(n-4-t)\partial_{s}l-4(n-2-\frac{t}{2}+\frac{\nu}{4})l-\nu\partial_{s}u+4\nu u\right)\phi ds\\
&\ \ \ \ \ +e^{-2s}\left(\partial^{2}_{s}u+(n-4-t)\partial_{s}u-2(\nu-t)u+\partial_{s}l+(n-t)l\right)\td\phi.
\end{split}
\end{align}
We conclude that in these cases, the system 
\begin{align}\label{kernelboxt}
\Box_{t}\xi=0,
\end{align}
reduces again to a constant coefficient system of ordinary differential equations. 
As in the discussion at the end of Subsection \ref{divlie}, 
we are interested in solutions of (\ref{kernelboxt}) which are \emph{essentially linear}, 
i.e., solutions $\xi$ that satisfy for every $0<\gamma<1$,
\begin{align}\label{lineargrowth}
\xi=O(r^{1+\gamma})~\text{as}~r\rightarrow\infty,~\xi=O(r^{1-\gamma})~\text{as}~r\rightarrow 0.
\end{align}
For these solutions we have
\begin{proposition}\label{nolineargrowth}
There exists $t_{0}>0$ such that for $0<|t|<t_{0}$ there is  a number $\gamma_{0}(t)$ with $0<\gamma_{0}(t)<1$ such that if $\xi$ is a 1-form in the kernel of $\Box_{t}$ satisfying \eqref{lineargrowth} for some $0<\gamma<\gamma_{0}(t)$ then $\xi$  is dual to a Killing vector field in $\R^{n}$.  
\end{proposition}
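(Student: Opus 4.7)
The plan is a perturbation analysis of the indicial roots of $\Box_t$ based on the type I / type II decomposition developed in Section \ref{divlie}. First, the explicit formulas (\ref{boxttype1})--(\ref{boxttype2}) show that $\Box_t$ still preserves this decomposition, so for each eigenvalue $\mu$ of $\tlaph$ on coclosed 1-forms of $S^{n-1}$ (type I) or $\nu$ of $\tlaph$ on functions (type II), the equation $\Box_t \xi = 0$ reduces to a constant-coefficient linear ODE in $s = \log r$ whose indicial polynomial depends smoothly on $t$. The key general observation is that any Killing 1-form on $\R^n$ satisfies $L_\xi g_0 = 0$, hence $\Box_t \xi = \delta_t L_\xi g_0 = 0$ identically in $t$, giving a persistent family of essentially linear kernel elements. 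The task is to show that these are the only essentially linear solutions for small $t \neq 0$.

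At $t = 0$, the discussion at the end of Section \ref{divlie} identifies the essentially linear kernel of $\Box$ as spanned by (a) type I modes $\xi = r^2 \psi$ at the $\tlaph$-eigenvalue $\mu = 2n - 4$ (duals of Killing fields on $S^{n-1}$, extending to rotational Killing fields on $\R^n$), (b) the type II mode $\xi = r\,dr$ at $\nu = 0$ (the dilation field), and (c) the type II modes $\xi = 2r\phi\,dr + r^2 \tilde d \phi$ at $\nu = 2n$ (gradients of harmonic quadratic polynomials on $\R^n$). The modes in (a) are Killing; those in (b) and (c) are conformal but not Killing, with $L_\xi g_0$ a nonzero multiple of $g_0$.

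For type I with $\mu = 2n - 4$, the indicial polynomial from (\ref{boxttype1}) factors identically in $t$ as $\lambda^2 + (n-4-t)\lambda - (2n-4-2t) = (\lambda - 2)(\lambda + n - 2 - t)$, so $\lambda = 2$ persists as a root for all $t$, giving the rotational Killing branch $\xi = r^2 \psi$. For any other type I eigenvalue $\mu$, the $t = 0$ indicial roots are bounded away from $2$, and by smooth dependence on $t$ they remain outside a small interval $[2 - \gamma_0, 2 + \gamma_0]$ when $|t|$ and $\gamma_0$ are sufficiently small. For the non-Killing modes in (b) and (c), the perturbation $-t\, i_{r^{-1}\partial_r} L_\xi g_0$ is nonzero, producing a first-order shift of the indicial root. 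Concretely for (b), the scalar indicial equation $\lambda^2 + (n-4-t)\lambda - (2n-4-t) = 0$ at $\nu = 0$ gives $\dot\lambda(0) = 1/n$ by implicit differentiation at $\lambda = 2, t = 0$, so $\lambda(t) = 2 + t/n + O(t^2)$, which leaves $[2-\gamma, 2+\gamma]$ whenever $\gamma < |t|/(2n)$; a parallel calculation handles (c) at $\nu = 2n$ via the coupled $2 \times 2$ indicial matrix.

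The main obstacle is case (c): the $\nu = 2n$ coupled indicial system admits mixed-exponent modes and a four-dimensional solution space, so one must carefully identify the branch that at $t = 0$ corresponds to the non-Killing conformal solution $2r\phi\,dr + r^2\tilde d \phi$, and then verify that the relevant perturbation determinant has nonzero $t$-derivative at that branch. The transversality is geometrically forced by the fact that these solutions are truly conformal and not Killing, so the perturbation term $-t\, i_{r^{-1}\partial_r} L_\xi g_0$ is a nontrivial push on the ODE system. Combining all of the above with the finiteness of $\tlaph$-eigenvalues in any bounded interval yields a $t_0 > 0$ and a $\gamma_0(t) > 0$ of order $|t|$ such that for $0 < |t| < t_0$ and $0 < \gamma < \gamma_0(t)$, the only essentially linear solutions of $\Box_t \xi = 0$ lie in the span of the Killing modes $r^2 \psi$, which are precisely the 1-forms dual to Killing vector fields of $\R^n$.
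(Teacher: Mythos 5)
Your proposal follows the same strategy as the paper: observe that Killing forms solve $\Box_t\xi=0$ for all $t$, reduce to the type I / type II decomposition so that $\Box_t\xi=0$ becomes a constant-coefficient ODE in $s=\log r$, identify the essentially linear kernel at $t=0$ (type I at the sphere-Killing eigenvalue and type II at $\nu=0,2n$), and then show that the non-Killing indicial roots move off the linear rate for small $t\neq 0$ while all other roots stay bounded away from it. Your explicit factorization of the type I indicial polynomial at $\mu=2n-4$, showing the Killing root $\lambda=2$ is exact in $t$, and the computation $\dot\lambda(0)=1/n$ for the type II $\nu=0$ mode are cleaner than what the paper records (the paper just quotes $c_1^+(t)=2$ and says the other type II cases are ``similar and omitted''). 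The one place where you substitute a heuristic for a calculation is the $\nu=2n$ coupled system: asserting that ``transversality is geometrically forced'' because the perturbation $-t\,i_{r^{-1}\partial_r}L_\xi g_0$ is nonzero does not by itself show the $2\times 2$ indicial determinant has nonvanishing $t$-derivative at the linear root (a nonzero forcing can fail to shift an eigenvalue); this must be checked by differentiating the indicial determinant, exactly as you did for $\nu=0$. The paper omits this verification too, so the gap is not a deviation from the source, but it is the one step that is asserted rather than proved.
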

\begin{proof}
Since $\Box_{t}\xi=\delta_{t}L_{\xi}g_{0}$, 
it follows that a 1-form which is dual to a Killing field in $\R^{n}$ is 
in the kernel of $\Box_{t}$ \emph{regardless} of the gauge condition. Next, 
since the general solution may be written as an infinite sum of
$1$-forms of Type I and Type II, 
it suffices to prove the proposition for $1$-forms of either type. 
When $t=0$ and as 
pointed out in Section \ref{divlie}, all solutions of (\ref{kernelboxt}) in 
separated variables which satisfy (\ref{lineargrowth}) correspond to the 
eigenvalues $\mu=2(n-2)$ on forms of type I and $\nu=0,2n$ on forms of type~II. 
For $t \neq 0$ small, 
the growth rates 
are small perturbations 
of the rates $a^{\pm}_{j}-1$ and $b^{\pm}_{j}-1$ with $a^{\pm}_{j}$, 
$b^{\pm}_{j}$ given by (\ref{exceptional1}) and (\ref{exceptional2}), 
moreover the rates corresponding to eigenvalues
$\mu > 2n$ or $\nu > 2n$ 
have real parts bounded away from $1$ and hence, for these 
solutions the proposition follows. It only remains to consider the kernel 
corresponding to the eigenvalues $\mu=2(n-2)$ and $\nu=0,2n$. 
From \cite[Theorem~C]{foll}, all eigenvalues $\mu$ corresponding to 1-forms of type I are
\begin{align}
\mu=(j+1)(j+n-3)~\text{for}~j=1,2,\ldots,
\end{align}
and then from (\ref{boxttype1}) all solutions of $\Box_{t}\xi=0$ with $\xi$ 
a 1-form of type I can be written as
\begin{align}
\xi=r^{c^{\pm}_{j}(t)-1}\left(r\psi_{j}\right),
\end{align}
where 
\begin{align}\label{ratetype1}
c^{\pm}_{j}(t)=\frac{-(n-4-t)\pm\sqrt{(n-2+2j)^{2}-2nt+t^{2}}}{2},
\end{align}
and $\psi_{j}$ is a 1-form with eigenvalue $(j+1)(j+n-3)$. It follows that if 
$t\ne 0$ is sufficiently small, all solutions given by (\ref{ratetype1}) are such that $Re\left(c_{j}^{\pm}(t))-1\right)$ is bounded away from 1 except for $c_{1}^{+}(t)-1$ 
which equals $1$ for \emph{any} $t$. In this case 
$r^2 \psi_{1}$ is dual to a Killing field in 
$\R^{n}$. For those 1-forms of type II corresponding to the eigenvalues $\nu=0,2n,$
the growth rates are strictly bounded away from $1$ for $t \neq 0$ sufficiently
small; the proof is similar and is omitted. 
\end{proof}

\section{Existence of divergence-free gauges}
\label{df}
In order to construct divergence-free gauges, we use the approach in \cite{ct} 
which consists in using the ALE of order 0 condition to initially prove that 
we can fix a gauge such that the modified divergence-free condition 
$\delta_{t}h=0$ for $t\ne 0$ is satisfied.
One is then interested in the $\delta_{t}$-free kernel of the modified 
operator $\mathcal{P}_{t}^{(k)}$. 
This kernel can be classified into three types:
\begin{enumerate}
  \item\label{class1} Growth solutions, i.e., solutions that are $O(r^{\beta^{\prime}})$ on $\R^{n}\backslash\{0\}$ for some $\beta^{\prime}>0$, 
  \item\label{class2} Decay solutions, i.e., solutions that are $O(r^{-\beta^{\prime}})$ on $\R^{n}\backslash\{0\}$ for some $\beta^{\prime}>0$, 
  \item\label{class3} ``Degenerate" solutions, i.e., solutions that are $O(r^{\gamma})$ as $r\rightarrow\infty$ and  $O(r^{-\gamma})$ as $r\rightarrow 0$ for some $\gamma > 0$
sufficiently small (depending upon $t$).  
  \end{enumerate}  The main step is to prove that solutions with the 
behavior described in (\ref{class3}) do \emph{not} occur 
(see Proposition \ref{propmod1}).   
We can then prove a growth estimate for solutions of the 
linear elliptic equation $\mathcal{P}_{t}^{(k)}h=0$ 
(see Lemma \ref{lemdf2}) which we call the \emph{Three Annulus Lemma}. 
The next step is to use scaling properties 
of the nonlinear system (\ref{eqsfm10}), and elliptic estimates to prove a 
nonlinear version of the Three Annulus Lemma (Lemma \ref{lemdf3}), so that the
behavior of solutions of (\ref{eqsfm10}) can be modeled after the behavior of 
solutions of the linearized equation. 
Consequently,  
we can use the nonlinear Three Annulus Lemma and the ALE of order zero 
condition to rule out 
the behavior described in (\ref{class1}) and \eqref{class3}
for solutions of the nonlinear equation. 
It follows that the only 
possible behavior at infinity for solutions of the nonlinear equation in the 
$\delta_{t}$-gauge is that of decay solutions in (\ref{class2}), 
which yields a gauge where the metric $g$ is ALE of positive order 
(see Corollary \ref{corgdf1}). With this improvement, one we can 
easily construct a global divergence-free gauge, see Proposition \ref{dftau}.   

The Three Annulus Lemma
was introduced in  \cite{ls1} and used in \cite{ct} for the 
Ricci-flat case. Even though 
our statement of the Three Annulus Lemma is very similar to that of \cite{ct}, 
we base our proof on a result called Turan's Lemma that we discuss in 
Appendix \ref{turan_lemma}. Our
case is complicated by the fact that higher
powers of $\log$ may enter into the asymptotic expansions since 
the system is of higher order. 

\subsection{The linearized equation in separated variables}
\label{lesv}
We consider solutions on $\R^{n}\backslash\{0\}$ of the system
\begin{align}\label{eqdf3_5}
\mathcal{P}_{t}^{(k)}h=0,
\end{align}
for $1\le k\le \frac{n}{2}-1$ if $n \geq 4$, or for $k = 1$ if $n=3$,
with $\mathcal{P}^{(k)}_{t}$ given by (\ref{eqsfm6}). 
With notation as in \cite{ct}, we write the general solution of $\mathcal{P}^{(k)}_{t}h=0$ 
as an infinite sum of the form
\begin{align}
h=\sum_{j=0}^{\infty}\left(l_{j}\phi_{j}dr\otimes dr+k_{j}\tau_{j}\boxtimes dr+f_{j}B_{j}+p_{j}\phi_{j}r^{2}\gS\right),\label{L2expansion}
\end{align}
where 
\begin{enumerate}
\item The functions $l_{j}$,$k_{j}$, $f_{j}$ and $p_{j}$ are radial, and 
the $(0,2)$ tensors $\phi_{j}dr\otimes dr$, $\tau_{j}\boxtimes dr$, $B_{j}$, 
and $\phi_{j}r^{2}\gS$ are radially parallel.
\item The components $\phi_{j}\in\Lambda^{0}(T^{*}S^{n-1}) $, $\tau_{j}\in\Lambda^{1}(T^{*}S^{n-1})$, $B_{j}\in S_{0}^{2}(T^{*}S^{n-1})$ are eigenfunctions of the rough laplacian. Here $S_{0}^{2}(T^{*}S^{n-1})$ is the traceless component of $S^{2}(T^{*}S^{n-1})$.
\item The set
\begin{align*}
\{l_{j}\phi_{j}dr\otimes dr+k_{j}\tau_{j}\boxtimes dr+f_{j}B_{j}+p_{j}\phi_{j}r^{2}\gS\}_{j},
\end{align*}
is orthogonal in the inner product $\langle\langle \cdot , \cdot \rangle\rangle$ 
in (\ref{eqdble}). 
\end{enumerate}
It is clear that $\mathcal{P}^{(k)}_{t}$ preserves the 
expansion \eqref{L2expansion} in the sense that
\begin{align*}
&\mathcal{P}^{(k)}_{t}\left(l_{j}\phi_{j}dr\otimes dr+k_{j}\tau_{j}\boxtimes dr+f_{j}B_{j}+p_{j}\phi_{j}\gS\right)\\
&=F_{j}^{(1)}\phi_{j}dr\otimes dr+F_{j}^{(2)}\tau_{j}\boxtimes dr+F^{(3)}_{j}B_{j}+F^{(4)}_{j}\phi_{j}r^{2}\gS,
\end{align*}
where $F^{(c)}_{j}=F^{(c)}_{j}(l_{j},k_{j},f_{j},p_{j})$ for $c=1,2,3,4$. It follows that if $\mathcal{P}_{t}^{(k)}h=0$ then for each $j$ we must have
\begin{align}
\mathcal{P}^{(k)}_{t}\left(l_{j}\phi_{j}dr\otimes dr+k_{j}\tau_{j}\boxtimes dr+f_{j}B_{j}+p_{j}\phi_{j}r^{2}\gS\right)=0,\label{ptode}
\end{align}
and 
\begin{align}\label{ptsystem}
F^{(c)}_{j}(l_{j},k_{j},f_{j},p_{j})\equiv 0,~\text{for}~c=1,2,3,4.
\end{align} 
The system (\ref{ptsystem}) is a linear system of ordinary differential equations which 
is homogeneous of order $2(k+1)$. Using the change of variable $r=e^{s}$, (\ref{ptsystem}) can be written as a constant coefficient linear system of ordinary differential equations, in particular, for every $j=0,1,\ldots,$  the system (\ref{ptsystem}) reduces to a first order constant coefficient linear system of the form
\begin{align}\label{firstorder}
\dot{X}=M_{j}X,
\end{align}
where $M_{j}$ is a matrix of order $8(k+1)\times 8(k+1)$. Let $\Phi_{j}$ is the characteristic polynomial of the matrix $A_{j}$ and suppose that we factor $\Phi_{j}$ as
\begin{align}\label{charpol}
\Phi_{j}(z)=\prod_{a=1}^{m_{j}}(z-\zeta_{j,a})^{n_{j,a}},
\end{align}
with 
\begin{align}
\sum_{a=1}^{m_{j}}n_{j,a}=8(k+1).
\end{align}
Note that in general, the roots $\zeta_{j,a}$ depend on $t$ and $k$, however, for simplicity we omit this dependence in the notation above.
Each of the functions $l_{j}$, $k_{j}$, $f_{j}$, $p_{j}$ may be expressed 
as a linear combination of functions of the form
\begin{align}\label{gensol}
(\log(r))^{b}r^{\zeta_{j,a}}~\text{with}~b=0,\ldots, n_{j,a}-1.
\end{align}  
Fix $j$ and let $T^{(c)}_{j}$ with $c=1,2,3,4,$ be the $(0,2)$ tensors 
\begin{align}
\label{Ttensors}
T_{j}^{(1)}=\phi_{j}dr\otimes dr,~T_{j}^{(2)}=\tau_{j}\boxtimes dr,~T_{j}^{(3)}=B_{j},~T_{j}^{(4)}=\phi_{j}r^{2}\gS,
\end{align}
so that we can write
\begin{align}
l_{j}\phi_{j}dr\otimes dr+k_{j}\tau_{j}\boxtimes dr+f_{j}B_{j}+p_{j}\phi_{j}r^{2}\gS=\sum_{c=1}^{4}q_{j,c}T^{(c)}_{j},
\end{align}
where 
\begin{align}
q_{j,1}=l_{j},~ q_{j,2}=k_{j},~ q_{j,3}=f_{j}~ \text{and}~ q_{j,4}=p_{j}.
\end{align}
It follows that if $h$ satisfies $\mathcal{P}^{(k)}_{t}h=0$ on $\R^{n}\backslash\{0\}$ then $h$ can be expanded as an infinite sum of the form
\begin{align}\label{parseval}
h=\sum_{j=0}^{\infty}\sum_{c=1}^{4}q_{j,c}T_{j}^{(c)},
\end{align}
where
\begin{enumerate}
  \item\label{sepvar1} The  $(0,2)$-tensors $T_{j}^{(c)}$ are radially parallel.
  \item\label{sepvar2} The set $\{T^{(c)}_{j}\}_{j,c}$ is orthogonal with respect to the norm $\langle\langle \cdot , \cdot \rangle\rangle$.
  \item\label{sepvar3} The radial functions $q_{j,c}$ are linear combinations of functions of the form (\ref{gensol}).
  \end{enumerate}
Let us write the radial function $q_{j,c}(r)$ as
\begin{align}\label{lincomb}
q_{j,c}(r)=\sum_{a=1}^{m_{j}}\sum_{b=0}^{n_{j,a}-1}d_{a,b,c}(\log(r))^{b}r^{\zeta_{j,a}},
\end{align}
where $d_{a,b,c}$ are complex numbers. From (\ref{lincomb}) we introduce the following sets
\begin{align}
A^{+}_{j}&=\{1\le a\le m_{j}:Re(\zeta_{j,a})>0\},\label{Aplus}\\
A^{-}_{j}&=\{1\le a\le m_{j}:Re(\zeta_{j,a})<0\},\label{Aminus}\\
A^{0}_{j}&=\{1\le a\le m_{j}:Re(\zeta_{j,a})=0\}.\label{Azero}
\end{align}
We will use $q_{j,c}^{\pm}$ to denote
\begin{align}\label{growthdecaypart}
q_{j,c}^{\pm}(r)=\sum_{a\in A^{\pm}_{j}}\sum_{b=0}^{n_{j,a}-1}d_{a,b,c}(\log(r))^{b}r^{\zeta_{j,a}},
\end{align}
and $q_{j,c}^{0}$ will be used to denote
\begin{align}\label{degeneratepart}
q_{j,c}^{0}(r)=\sum_{a\in A^{0}_{j}}\sum_{b=0}^{n_{a}-1}d_{a,b,c}(\log(r))^{b}r^{\zeta_{j,a}}.
\end{align}
With this we have a decomposition of the form
\begin{align}
h=h^{+}+h^{-}+h^{0},\label{pos_neg_log}
\end{align}
where
\begin{align} \label{pos}
h^{\pm}=\sum_{j=0}^{\infty} \sum_{c=1}^4q_{j,c}^{\pm}T^{(c)}_{j},
\mbox{ and } h^{0}=\sum_{j=0}^{\infty} \sum_{c=1}^4 q_{j,c}^{0}T^{(c)}_{j}.
\end{align}
\begin{definition}
\label{degdef}
{\em
A solution $h$ of \eqref{gensol} is a {\em{degenerate solution of}} \eqref{eqdf3_5} 
if $h = h^0$.} 
\end{definition}  
It will also be important for us to consider for any nonnegative integer $j$ the number
\begin{align}\label{growthrate}
\beta_{j}=\min\{ |Re(\zeta_{j,a})|:a\in A^{\pm}_{j}\}.
\end{align}
\subsection{Estimates for the linearized equation}\label{ee}
Consider a solution of (\ref{eqdf3_5}) on an annulus, i.e., a solution of the problem
\begin{align}
\mathcal{P}^{(k)}_{t}h=0~\text{on}~A_{a,b}(0),\label{lin_eq_annulus}  
\end{align}
where $0<a<b$. Note that since $A_{a,b}(0)$ and $\R^{n}\backslash\{0\}$ have the same cross section, if $h$ is a solution of (\ref{lin_eq_annulus}) then we can repeat the analysis in 
Subsection \ref{lesv} to decompose $h$ as $h=h^{+}+h^{-}+h^{0}$. 
For solutions of (\ref{lin_eq_annulus}), however, the infinite sum (\ref{parseval}) may \emph{not} be defined outside of $A_{a,b}(0)$. 
By definition of the norm $|||\cdot|||$, if we expand a solution of $h$ of (\ref{lin_eq_annulus}) 
satisfying $|||h|||_{a,b}<\infty$ as in (\ref{parseval}) we see that
\begin{align}\label{L2norm}
|||h|||_{a,b}^{2}=\sum_{j=0}^{\infty}\sum_{c=1}^{4}\lambda_{j}^{(c)}\int_{a}^{b}|q_{j,c}(r)|^{2}r^{-1}dr,
\end{align}  
where 
\begin{align}
\lambda^{(c)}_{j}=\langle\langle T^{(c)}_{j},T^{(c)}_{j}\rangle\rangle.
\end{align}
We consider again the numbers $\beta_{j}$ in (\ref{growthrate}) and we define
\begin{align}
\beta=\inf_{j=0,1,\ldots}\{\beta_{j}\}.
\end{align}
The number $\beta$ is well-defined and positive for $t$ sufficiently 
small, since the equation \eqref{eqdf3_5} is a perturbation of 
$\Delta^{k +1}h = 0$, which has indicial roots contained in 
$\mathbb{Z} \subset \mathbb{C}$ (compare \eqref{hpex}-\eqref{eqlin6_2}).

We have the following property for solutions of (\ref{lin_eq_annulus}):
\begin{lemma}\label{posneg} For $t$ sufficiently small, 
let $0<\beta^{\prime}<\frac{1}{2}\beta$. Let $h$ be a solution of 
\eqref{lin_eq_annulus} on an annulus of the form $A_{a,L^{2}a}(0)$ where $a>0$ and $L>1$, 
and consider the decomposition 
\begin{align}
h=h^{+}+h^{-}+h_{0}~\text{on}~A_{a,L^{2}a}(0).\label{pos_neg_log_annulus}
\end{align}
Then there exists $L_{0}=L_{0}(\beta, \beta^{\prime})>1$ such that if $L>L_{0}$, then 
\begin{align}
|||h^{+}|||_{La,L^{2}a}\ge L^{\beta^{\prime}} |||h^{+}|||_{a,La},\label{annulusgrowth}
\end{align} 
and
\begin{align}
|||h^{-}|||_{La,L^{2}a}\le L^{-\beta^{\prime}} |||h^{-}|||_{a,La}.\label{annulusdecay}
\end{align} 
\end{lemma}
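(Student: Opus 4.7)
The plan is to use the separation-of-variables expansion from Section~\ref{lesv} to reduce the lemma to a statement about exponential polynomials in one real variable, and then invoke a Turan-type inequality proved in the Appendix.

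First I would exploit the decomposition \eqref{parseval} together with the orthogonality \eqref{L2norm} to split
\[
|||h^{\pm}|||_{a,b}^{2} \;=\; \sum_{j,c} \lambda_{j}^{(c)} \int_{a}^{b} r^{-1} |q_{j,c}^{\pm}(r)|^{2}\, dr.
\]
Because $h^{+}$, $h^{-}$ and $h^{0}$ share the same radially-parallel angular tensors $T_{j}^{(c)}$ and differ only in their radial coefficients, the splitting is compatible with the orthogonal expansion. Thus it suffices to prove \eqref{annulusgrowth}--\eqref{annulusdecay} for each single radial coefficient $q_{j,c}^{\pm}$, provided the threshold $L_{0}$ can be chosen uniformly in $(j,c)$.

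Second, I would make the change of variable $s = \log r$: each radial coefficient $q_{j,c}^{\pm}(e^{s})$ becomes, by \eqref{growthdecaypart}, an exponential polynomial
\[
p(s) \;=\; \sum_{a\in A_{j}^{\pm}} \sum_{b=0}^{n_{j,a}-1} d_{a,b,c}\, s^{b}\, e^{\zeta_{j,a} s},
\]
in which all exponents satisfy $\operatorname{Re}(\zeta_{j,a}) \geq \beta$ in the $+$ case and $\operatorname{Re}(\zeta_{j,a}) \leq -\beta$ in the $-$ case. The total number of summands and the polynomial degrees are both bounded by $8(k+1)$, the order of the first-order system \eqref{firstorder}, uniformly in $j$. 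The inequalities \eqref{annulusgrowth}--\eqref{annulusdecay} then translate into comparison estimates between $\int_{s_{0}}^{s_{0}+\log L} |p|^{2}\, ds$ and $\int_{s_{0}+\log L}^{s_{0}+2\log L} |p|^{2}\, ds$, where $s_{0} = \log a$. The key step is to invoke the Turan-type lemma from the Appendix, which gives exactly such a comparison, with a threshold $L_{0}$ depending only on $\beta$, $\beta'$, and the cardinality $8(k+1)$ of the exponent set (and in particular not on the imaginary parts of the $\zeta_{j,a}$). Applying this to each $(j,c)$ and summing through the orthogonal expansion yields the two desired inequalities.

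The main obstacle is precisely the uniformity across modes in the Turan estimate: the imaginary parts of $\zeta_{j,a}$ grow without bound as $j\to\infty$, and the polynomial factors $s^{b}$ arising from the multiplicities $n_{j,a}$ in \eqref{charpol} make it impossible to apply the standard single-exponential comparison term-by-term. Overcoming this is the purpose of the Turan-type result in the Appendix. Once the Turan-type lemma is available with the stated uniformity, the remainder of the argument is a bookkeeping exercise in the orthogonal expansion, and the constraint $\beta' < \tfrac{1}{2}\beta$ emerges as a convenient margin absorbing losses from multiplicities and from switching between $L^{\infty}$- and $L^{2}$-type estimates in the Turan argument.
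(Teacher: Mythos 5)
Your proposal follows exactly the paper's argument: reduce to $a=1$ by scale invariance, use the orthogonal mode decomposition \eqref{parseval}--\eqref{L2norm} and the substitution $r=e^{s}$ to turn each radial coefficient into an exponential polynomial, and then invoke the integral Turan-type inequality proved in Appendix~\ref{turan_lemma} (Lemma~\ref{3annulimult}), whose constant depends only on the number of exponents counted with multiplicity, bounded uniformly in $j$ by $8(k+1)$, and not on their imaginary parts. The margin $\beta'<\tfrac{1}{2}\beta$ is used exactly as you indicate, to absorb that constant by taking $L_0$ large, which is also how the paper obtains the threshold $L_0(\beta,\beta')$.
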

\begin{proof}
By the scale invariance of the norms $|||\cdot|||$, it suffices to prove the lemma for $a=1$. The proof is completed in Appendix \ref{turan_lemma} using Turan's Lemma. 
\end{proof}
We note that we are only able to prove (\ref{annulusgrowth}) and (\ref{annulusdecay}) for $0<\beta^{\prime}<\frac{1}{2}\beta$ and not for $0<\beta^{\prime}<\beta$ as in \cite{ct}. 
However, the estimates (\ref{annulusgrowth}) and (\ref{annulusdecay}) are
sufficient for our purpose. Next, we use this to prove
\begin{lemma}[Three Annulus Lemma]\label{lemdf2}
Let $a>0$, $L>1$ and suppose that $h$ is a solution of \eqref{eqdf3_5} 
in $A_{La,L^{3}a}(0)$ for $t$ sufficiently small. Suppose in addition 
that in the decomposition \eqref{pos_neg_log_annulus}, $h_{0}\equiv 0$. 
For any $0<\beta^{\prime}<\frac{1}{2}\beta$, there exists $L_{0}=L_{0}\left(\beta,\beta^{\prime}\right)$ with $L_{0}>1$ such that for $L>L_{0}$, if
\begin{align}
|||h|||_{aL,aL^{2}}\ge L^{\beta^{\prime}}|||h|||_{a,aL},\label{eqdf3_12}
\end{align}
then
\begin{align}
|||h|||_{aL^{2},aL^{3}}\ge L^{\beta^{\prime}}|||h|||_{aL,aL^{2}},\label{eqdf3_13}
\end{align}
and if
\begin{align}
|||h|||_{aL^{2},aL^{3}}\le L^{-\beta^{\prime}}|||h|||_{aL,aL^{2}},\label{eqdf3_14}
\end{align}
then
\begin{align}
|||h|||_{aL,aL^{2}}\le L^{-\beta^{\prime}}|||h|||_{a,La}.\label{eqdf3_15}
\end{align}
Moreover, at least one of \eqref{eqdf3_13}, 
\eqref{eqdf3_15} holds (whether or not at least one of \eqref{eqdf3_12}, 
\eqref{eqdf3_15} holds). 
\end{lemma}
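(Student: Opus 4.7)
The plan is to reduce the lemma to the unconditional two-sided bounds on $h^{+}$ and $h^{-}$ provided by Lemma \ref{posneg}, combined with triangle-inequality bookkeeping and a dichotomy based on which of $h^{\pm}$ dominates on the middle annulus. By scale invariance of $|||\cdot|||$ I may set $a=1$; since $h_{0}\equiv 0$, write $h=h^{+}+h^{-}$ on $A_{1,L^{3}}(0)$. Fix once and for all an auxiliary exponent $\gamma$ with $\beta'<\gamma<\tfrac{1}{2}\beta$ and apply Lemma \ref{posneg} with exponent $\gamma$ on each of the two overlapping annuli $A_{1,L^{2}}(0)$ and $A_{L,L^{3}}(0)$. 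Writing $P_{k}=|||h^{+}|||_{L^{k-1},L^{k}}$ and $N_{k}=|||h^{-}|||_{L^{k-1},L^{k}}$ for $k=1,2,3$, this yields unconditionally $P_{k+1}\ge L^{\gamma}P_{k}$ and $N_{k}\ge L^{\gamma}N_{k+1}$ for $k=1,2$.

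Next I would split into two cases based on which of $P_{2}$, $N_{2}$ is larger. If $P_{2}\ge N_{2}$, the triangle inequality gives $|||h|||_{L,L^{2}}\le P_{2}+N_{2}\le 2P_{2}$, while the reverse triangle inequality together with the bounds above gives $|||h|||_{L^{2},L^{3}}\ge P_{3}-N_{3}\ge L^{\gamma}P_{2}-L^{-\gamma}N_{2}\ge(L^{\gamma}-L^{-\gamma})P_{2}$; hence $|||h|||_{L^{2},L^{3}}/|||h|||_{L,L^{2}}\ge\tfrac{1}{2}(L^{\gamma}-L^{-\gamma})$. If instead $P_{2}<N_{2}$, the symmetric argument on the inner annulus gives $|||h|||_{L,L^{2}}\le 2N_{2}$ and $|||h|||_{1,L}\ge N_{1}-P_{1}\ge(L^{\gamma}-L^{-\gamma})N_{2}$, so $|||h|||_{L,L^{2}}/|||h|||_{1,L}\le 2/(L^{\gamma}-L^{-\gamma})$. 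For $L$ large enough in terms of $\gamma-\beta'>0$, the first ratio exceeds $L^{\beta'}$ and the second is dominated by $L^{-\beta'}$, so in either case one of \eqref{eqdf3_13} or \eqref{eqdf3_15} holds. This proves the ``moreover'' clause directly.

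The two implications then follow by using the hypothesis to rule out the wrong case. The hypothesis \eqref{eqdf3_12} is incompatible with the second case (which forces $|||h|||_{L,L^{2}}\le L^{-\beta'}|||h|||_{1,L}$), so the first case must hold and \eqref{eqdf3_13} follows; likewise, \eqref{eqdf3_14} is incompatible with the first case (which forces $|||h|||_{L^{2},L^{3}}\ge L^{\beta'}|||h|||_{L,L^{2}}$), so the second case must hold and \eqref{eqdf3_15} follows.

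The main technical obstacle is not at the level of the present argument but rather in Lemma \ref{posneg}, which supplies the unconditional growth and decay bounds used above. Its proof must control the potentially large powers of $\log r$ appearing in the expansion \eqref{lincomb}, since the underlying system is of order $2(k+1)$, and this is precisely where Turan's Lemma enters; the same phenomenon forces us to settle for the threshold $\beta'<\tfrac{1}{2}\beta$ rather than $\beta'<\beta$ as in \cite{ct}. Once those exponential bounds are in hand, the Three Annulus Lemma is pure triangle-inequality juggling, with the only mild subtlety being the use of a strictly larger exponent $\gamma>\beta'$ to absorb the factor of $2$ lost to the triangle inequality.
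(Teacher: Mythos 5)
Your proof is correct and takes a genuinely cleaner route than the paper. The paper works directly with the squared $L^2$-norms, expands $|||h|||^2_{1,L}$ with the cross term $\langle\langle\langle h^+,h^-\rangle\rangle\rangle_{1,L}$, estimates that cross term via Cauchy--Schwarz and Lemma~\ref{posneg}, and then carries out a fairly long sequence of $\epsilon$-manipulations (introducing $c(\epsilon)$, $q(\epsilon)$, $v$, $w$, etc.) to first establish \eqref{eqdf3_13} assuming \eqref{eqdf3_12}, with the ``moreover'' clause handled separately at the end by a dichotomy on whether $|||h^+|||_{L,L^2}$ or $|||h^-|||_{L,L^2}$ exceeds $\tfrac12|||h|||_{L,L^2}$. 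You instead run a single dichotomy (on which of $P_2$, $N_2$ dominates) and use only the triangle and reverse triangle inequality, absorbing the lost factor of $2$ by running Lemma~\ref{posneg} at a slightly larger auxiliary exponent $\gamma\in(\beta',\tfrac12\beta)$. That establishes the ``moreover'' clause first, and the two implications then fall out by contradiction (if \eqref{eqdf3_12} and the second alternative both held, one would get $|||h|||_{1,L}=0$, collapsing to the degenerate case). This is a real simplification: you avoid all the inner-product bookkeeping and the $\epsilon$-constants, at no cost. Both proofs ultimately rest entirely on Lemma~\ref{posneg}, so the essential content is the same; the gain is expository. One small point worth making explicit when you write this up is the degenerate case $P_2=N_2=0$, where $h\equiv 0$ on $A_{1,L^3}$ and all inequalities hold trivially, so the strict dichotomy is harmless.
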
  

\begin{proof} By scaling properties of the norms $|||\cdot|||$, it suffices to prove the lemma for the case $a=1$. Suppose that (\ref{eqdf3_12}) holds. From the decomposition $h=h^{+}+h^{-}$ and the Cauchy-Schwarz inequality we clearly have 
\begin{align}
|||h|||^{2}_{L,L^{2}}\le 2\left(|||h^{+}|||^{2}_{L,L^{2}}
+|||h^{-}|||^{2}_{L,L^{2}}\right),
\end{align}
and then
\begin{align}\label{l2crossterms}
2\left(|||h^{+}|||^{2}_{L,L^{2}}+|||h^{-}|||^{2}_{L,L^{2}}\right)\ge
L^{2\beta^{\prime}}\left(|||h^{+}|||^{2}_{1,L}+|||h^{-}|||^{2}_{1,L}+2\langle\langle\langle h^{+},h^{-}\rangle\rangle\rangle_{1,L}\right).
\end{align}
Here $\langle\langle\langle\cdot,\cdot\rangle\rangle\rangle$ is the inner product associated to the norm $|||\cdot|||$.
From Lemma \ref{posneg}, for $L$ large enough (depending on $\beta$ and $\beta^{\prime}$) we can estimate $\langle\langle\langle h^{+},h^{-}\rangle\rangle\rangle_{1,L}$ in (\ref{l2crossterms}) as
\begin{align}\label{crossterms}
\langle\langle\langle h^{+},h^{-}\rangle\rangle\rangle_{1,L}\ge-|||h^{+}|||_{1,L}|||h^{-}|||_{1,L}\ge -L^{-\beta^{\prime}}|||h^{+}|||_{L,L^{2}}|||h^{-}|||_{1,L},
\end{align} 
and then for a fixed $0<\epsilon<\frac{1}{2}$, there exists a positive constant $c(\epsilon)$ such that (\ref{l2crossterms}) and (\ref{crossterms}) imply
 \begin{align}
L^{2\beta^{\prime}}&\left(|||h^{+}|||^{2}_{1,L}+|||h^{-}|||^{2}_{1,L}-2c(\epsilon)L^{-2\beta^{\prime}}|||h^{+}|||^{2}_{L,L^{2}}-2\epsilon|||h^{-}|||^{2}_{1,L}\right)\nonumber\\
&\le 2\left(|||h^{+}|||^{2}_{L,L^{2}}+|||h^{-}|||^{2}_{L,L^{2}}\right),
\end{align}
and hence
\begin{align}
2(1+c(\epsilon))\left(|||h^{+}|||^{2}_{L,L^{2}}+|||h^{-}|||^{2}_{L,L^{2}}\right)\ge (1-2\epsilon)L^{2\beta^{\prime}}\left(|||h^{+}|||^{2}_{1,L}+|||h^{-}|||^{2}_{1,L}\right).
\end{align}
We have shown that for fixed $0<\epsilon<\frac{1}{2}$ there exists a positive constant $q(\epsilon)$ such that
\begin{align}\label{l2nocrossterms}
\left(|||h^{+}|||^{2}_{L,L^{2}}+|||h^{-}|||^{2}_{L,L^{2}}\right)\ge q(\epsilon)L^{2\beta^{\prime}}\left(|||h^{+}|||^{2}_{1,L}+|||h^{-}|||^{2}_{1,L}\right).
\end{align} 
Combining Lemma \ref{posneg} with (\ref{l2nocrossterms}) we obtain
\begin{align}
\left(|||h^{+}|||^{2}_{L,L^{2}}+|||h^{-}|||^{2}_{L,L^{2}}\right)\ge q(\epsilon)L^{2\beta^{\prime}}|||h^{-}|||^{2}_{1,L}\ge q(\epsilon)L^{4\beta^{\prime}}|||h^{-}|||^{2}_{L,L^{2}},
\end{align}
and therefore
\begin{align}\label{l2plus>minus}
|||h^{+}|||^{2}_{L,L^{2}}\ge \left(q(\epsilon)L^{4\beta^{\prime}}-1\right)|||h^{-}|||^{2}_{L,L^{2}}.
\end{align}
On the other hand, for fixed $0<\epsilon<\frac{1}{2}$ we choose $c(\epsilon)$ as before so that 
\begin{align}
|||h|||^{2}_{L^{2},L^{3}}\ge(1-2\epsilon)|||h^{+}|||^{2}_{L^{2},L^{3}}-2c(\epsilon)|||h^{-}|||^{2}_{L^{2},L^{3}},
\end{align}
and by virtue of Lemma \ref{posneg}, for any $\beta^{\prime\prime}$ with $\beta^{\prime}<\beta^{\prime\prime}<\frac{1}{2}\beta$, there exists $L_{0}=L_{0}(\beta,\beta^{\prime\prime})>0$ such that if $L>L_{0}$ then
\begin{align}\label{l2_2R_3R}
|||h|||^{2}_{L^{2},L^{3}}
&\ge(1-2\epsilon)L^{2\beta^{\prime\prime}}|||h^{+}|||^{2}_{L,L^{2}}-2c(\epsilon)L^{-2\beta^{\prime\prime}}|||h^{-}|||_{L,L^{2}}.
\end{align}
If $L$ is large enough so that $L^{-2\beta^{\prime\prime}}<\frac{1}{2}$ we have from (\ref{l2plus>minus})
\begin{align}\label{2R3Rlowbound}
L^{2\beta^{\prime\prime}}|||h^{+}|||^{2}_{L,L^{2}}&\ge L^{2\beta^{\prime\prime}}\left(\frac{1}{2}+L^{-2\beta^{\prime\prime}}\right)|||h^{+}|||^{2}_{L,L^{2}}\ge \frac{1}{2}L^{2\beta^{\prime\prime}}|||h^{+}|||^{2}_{L,L^{2}}+|||h^{+}|||^{2}_{L,L^{2}}\nonumber \\ 
&\ge\frac{1}{2}L^{2\beta^{\prime\prime}}|||h^{+}|||^{2}_{L,L^{2}}+\left(q(\epsilon)L^{4\beta^{\prime}}-1\right)|||h^{-}|||^{2}_{L,L^{2}}.
\end{align}
Finally, from (\ref{l2_2R_3R}) and (\ref{2R3Rlowbound}) it follows that
\begin{align}
|||h|||^{2}_{L^{2},L^{3}}\ge v(\epsilon,L,\beta^{\prime},\beta^{\prime\prime})L^{2\beta^{\prime}}|||h^{+}|||^{2}_{L,L^{2}}+w(\epsilon,L,\beta^{\prime},\beta^{\prime\prime})L^{2\beta^{\prime}}|||h^{-}|||^{2}_{L,L^{2}},
\end{align}
where 
\begin{align}
v(\epsilon,L,\beta^{\prime},\beta^{\prime\prime})=\frac{1}{2}(1-2\epsilon)L^{2(\beta^{\prime\prime}-\beta^{\prime})},
\end{align}
and 
\begin{align}
w(\epsilon,L,\beta^{\prime},\beta^{\prime\prime})=(1-2\epsilon)(q(\epsilon)L^{2\beta^{\prime}}-L^{-2\beta^{\prime}})-2c(\epsilon)L^{-2(\beta^{\prime}+\beta^{\prime\prime})}.
\end{align}
It is clear that we can choose $L$ large enough so that $v(\epsilon,L,\beta^{\prime},\beta^{\prime\prime}), w(\epsilon,L,\beta^{\prime},\beta^{\prime\prime})\ge 2$ and then
\begin{align}
|||h|||^2_{L^{2},L^{3}}\ge 2L^{2\beta^{\prime}}\left(|||h^{+}|||^{2}_{L,L^{2}}+|||h^{-}|||^{2}_{L,L^{2}}\right)\ge
L^{2\beta^{\prime}}|||h|||^{2}_{L,L^{2}},
\end{align} 
as needed. The proof for the case (\ref{eqdf3_14}) is analogous. For the rest of the proposition, note that by the Cauchy-Schwarz inequality we must have either $|||h^{+}|||_{L,L^{2}}\ge \frac{1}{2}|||h|||_{L,L^{2}}$ or $|||h^{-}|||_{L,L^{2}}\ge \frac{1}{2}|||h|||_{L,L^{2}}$. If $|||h^{+}|||_{L,L^{2}}\ge \frac{1}{2}|||h|||_{L,L^{2}}$, then for fixed $0<\epsilon<1$ there exists $c_{0}(\epsilon)>1$ such that
\begin{align}
(1-c_{0}(\epsilon))|||h^{+}|||^{2}_{L,L^{2}}+(1-\epsilon)|||h^{-}|||^{2}_{L,L^{2}}\le |||h|||^{2}_{L,L^{2}},
\end{align}
and since $|||h|||^{2}_{L,L^{2}}\le 4|||h^{+}|||^{2}_{L,L^{2}}$ we conclude that for some positive constant $c_{1}(\epsilon)$ we have
\begin{align}\label{plus>minusR2R}
|||h^{+}|||^{2}_{L,L^{2}}\ge c_{1}(\epsilon)|||h^{-}|||^2_{L,L^{2}}.
\end{align}
On the other hand, if we fix $0<\epsilon<1$ there exists a constant $c_{2}(\epsilon)>0$ such that
\begin{align}
|||h|||^{2}_{L^{2},L^{3}}\ge (1-\epsilon)|||h^{+}|||^{2}_{L^{2},L^{3}}-c_{2}(\epsilon)|||h^{-}|||^{2}_{L^{2},L^{3}},
\end{align}
and from Lemma \ref{posneg}, for any $\beta^{\prime}<\beta^{\prime\prime}<\frac{1}{2}\beta$ there exists $L_{0}=L_{0}(\beta,\beta^{\prime\prime})$ such that if $L>L_{0}$ then
\begin{align}
|||h|||^2_{L^{2},L^{3}}\ge (1-\epsilon)L^{2\beta^{\prime\prime}}|||h^{+}|||^2_{L,L^{2}}-c_{2}(\epsilon)L^{-2\beta^{\prime\prime}}|||h^{-}|||^{2}_{L,L^{2}},
\end{align}
and from (\ref{plus>minusR2R}) we have
\begin{align}
|||h|||^2_{L^{2},L^{3}}\ge C(\epsilon,L,\beta^{\prime},\beta^{\prime\prime})L^{2\beta^{\prime}}|||h^+|||^{2}_{L,L^{2}},
\end{align}
where 
\begin{align}
C(\epsilon,L,\beta^{\prime},\beta^{\prime\prime})=(1-\epsilon)L^{2(\beta^{\prime\prime}-\beta^{\prime})}
-\frac{c_{2}(\epsilon)}{c_{1}(\epsilon)}L^{-2(\beta^{\prime}+\beta^{\prime\prime})}.
\end{align}
If we choose $L$ large enough so that $C(\epsilon,L,\beta^{\prime},\beta^{\prime\prime})\ge 4$ we obtain
\begin{align}
|||h|||^{2}_{L^{2},L^{3}}\ge 4L^{2\beta^{\prime}}|||h^{+}|||^{2}_{L,L^{2}}\ge L^{2\beta^{\prime}}|||h|||^{2}_{L,L^{2}},
\end{align}
as needed. In a similar way we can show that if $|||h^{-}|||^2_{L,L^{2}}\ge \frac{1}{2}|||h|||^2_{L,L^{2}}$, then we have the inequality $|||h|||^2_{L,L^{2}}\le L^{-2\beta^{\prime}}|||h|||^2_{1,L}$, which completes the proof.
\end{proof}

\subsection{Degenerate solutions of the linearized equations}\label{ws}
We now turn our attention to degenerate solutions of (\ref{eqdf3_5}).
If $t = 0$, then constants are non-trivial degenerate 
solutions, which are also divergence-free. 
However, these are not $\delta_t$-free for $t \neq 0$. 
The main result of this section is that 
there are in fact {\em{no}} degenerate solutions of (\ref{eqdf3_5}) 
for all $t$ nonzero and sufficiently small:
\begin{proposition}\label{propmod1}
There exists $t_{0}>0$ such that if $0<|t|<t_{0}$ there are no degenerate 
solutions of \eqref{eqdf3_5} subject to $\delta_{t}h=0$ on any 
annulus $A_{c,d}(0)$.  In particular, for $t\ne 0$ sufficiently 
small, Lemma {\em{\ref{lemdf2}}} holds.
\end{proposition}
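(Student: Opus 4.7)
The plan is to run a perturbation argument around $t = 0$, exploiting the fact that at $t = 0$ the operator $\mathcal{P}_0^{(k)}$ reduces (up to a multiplicative constant and the gauge terms) to $\Delta^{k+1}$ acting componentwise. In the separation-of-variables ansatz of Subsection~\ref{lesv}, each spherical mode $j$ produces a constant-coefficient linear ODE system \eqref{firstorder} in $s = \log r$ whose characteristic polynomial $\Phi_j(z) = \Phi_j^t(z)$ depends analytically on $t$. Because the indicial roots of $\Delta^{k+1}$ on any spherical harmonic mode are integers (compare \eqref{hpex}--\eqref{eqlin6_2}), the only roots of $\Phi_j^0$ lying on the imaginary axis lie at $z = 0$. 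By continuity of roots, for $t$ sufficiently small every other root of $\Phi_j^t$ stays a definite distance away from the imaginary axis. Thus any degenerate solution for small $t \neq 0$ must be assembled from roots $\zeta_{j,a}(t)$ which were $0$ at $t=0$ and which remain purely imaginary as $t$ turns on.

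First I would identify, for each spherical mode $j$ and $t=0$, the finite-dimensional space $V_j$ of degenerate (i.e.\ polynomial-in-$\log r$) solutions of $\mathcal{P}_0^{(k)} h = 0$, and inside $V_j$ the subspace $W_j$ cut out by $\delta_0 h = \delta h = 0$. Since radially parallel $\delta$-free symmetric $2$-tensors on $\R^n \setminus \{0\}$ are highly constrained, $W_j$ is nonzero only for finitely many low-frequency modes and can be enumerated using the decomposition of symmetric $2$-tensors into the types $T^{(c)}_j$ from \eqref{Ttensors}.

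Next, I would quantify how the $\delta_t$-free condition perturbs the $\delta_0$-free condition on $W_j$. Writing $\delta_t h = \delta h - t\, i_{r^{-1}\partial_r} h$ and plugging a candidate $h \in W_j$ into both the gauge and the indicial equation, a first-order perturbation argument (implicit function theorem applied to $\Phi_j^t$ on $W_j$) shows that each root $\zeta_{j,a}(t)$ that was at $0$ for $t = 0$ acquires a first-order correction which is \emph{real} and nonzero in $t$, pushing the root off the imaginary axis into $\R$, except possibly for those directions in $W_j$ that correspond (after antidifferentiation in $r$) to $1$-forms $\xi$ dual to Killing fields on $\R^n$. To treat this residual case, one observes that such a would-be degenerate $h$ with $\delta_t h = 0$ can be integrated to an essentially linear $1$-form $\xi$ (satisfying \eqref{lineargrowth}) in the kernel of $\Box_t$, and then Proposition~\ref{nolineargrowth} forces $\xi$ to be dual to a Killing field of $\R^n$, so that $h = -\tfrac{1}{2}\mathcal{L}_\xi g_0 = 0$.

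The main obstacle is carrying out the bookkeeping of the first-order perturbation: one must verify, mode by mode on every spherical frequency that contributes a zero indicial root at $t = 0$, that the linearization in $t$ of $\Phi_j^t$ restricted to $W_j$ has no purely imaginary eigenvalues, other than those accounted for by Killing symmetries. The structural reason this is tractable is that $-t\, i_{r^{-1}\partial_r}$ acts as a real first-order deformation on each reduced ODE, so the sign of the correction can be read off the indicial data of $\Delta^{k+1}$ explicitly. Once every root that was at $0$ either leaves the imaginary axis or is absorbed by the Killing-field case, the definition of degenerate solution forces $h \equiv 0$, and Lemma~\ref{lemdf2} then applies without any hypothesis ruling out $h^0$.
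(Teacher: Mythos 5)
Your plan amounts to a mode-by-mode perturbation analysis of the indicial roots of $\Phi_j^t$ as $t$ moves off $0$. The paper does something structurally different: it never tracks roots. Given a degenerate $h$ with $\delta_t h = 0$, the paper applies the isomorphism theorem for $\Box$ (Proposition~\ref{propws1}) to solve $\Box X = \delta h$ in weighted Sobolev spaces, so that $h_0 := h - L_X g_0$ is globally divergence-free with controlled growth; diffeomorphism invariance of the linearized obstruction-flat equation forces $\Delta^{k+1} h_0 = 0$; Proposition~\ref{proplin1} together with the growth bound then yields $h_0 = $ constant, hence itself a Lie derivative; and so $h = L_Y g_0$ with $\Box_t Y = 0$ and $Y$ essentially linear, at which point Proposition~\ref{nolineargrowth} forces $Y$ to be Killing and $h \equiv 0$. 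This collapses the higher-order tensor problem to the already-understood second-order $1$-form operator $\Box_t$, with no eigenvalue perturbation calculation at all.

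There are two genuine gaps in your proposal. First, the step you yourself flag as the ``main obstacle'' --- verifying mode by mode that the first-order $t$-correction to each zero indicial root is real (not merely nonzero, but off the imaginary axis) except in Killing directions --- is not a bookkeeping detail but the entire substance of the argument. Zero roots of $\Phi_j^0$ can occur with multiplicity, and a multiple root can split along the imaginary axis as well as off it; without computing the $O(t)$ terms of the reduced ODE system one cannot rule this out. No part of your proposal supplies the needed computation or a structural reason it comes out correctly for every spherical frequency and every tensor type $T^{(c)}_j$. Second, and more fundamentally, the phrase ``can be integrated to an essentially linear $1$-form'' asserts precisely the hardest fact in the paper's proof: that a $\delta_t$-free degenerate symmetric $2$-tensor is a Lie derivative. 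A degenerate $h$ is not a priori of the form $L_\xi g_0$; producing $\xi$ is exactly what the $\Box$-isomorphism argument accomplishes (one solves $\Box X = \delta h$ in weighted spaces, then uses Proposition~\ref{proplin1} to conclude the residual divergence-free piece $h_0$ is itself a Lie derivative). Treating this as an afterthought for the ``residual Killing case'' only is circular: every degenerate $h$ must be shown to be a Lie derivative before Proposition~\ref{nolineargrowth} can be invoked. (As a minor point, if $\xi$ is a Killing field on $\R^n$ then $L_\xi g_0 = 0$ outright, so the conclusion $h = -\tfrac{1}{2}L_\xi g_0 = 0$ is stated with a spurious factor; the logic is not affected but it suggests a conflation of $\delta^* = -\tfrac12 \mathcal{L}$ with $\mathcal{L}$ itself.)
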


\begin{proof}
We only need to consider the
case that $h$ is a finite sum in \eqref{parseval}. 
In this case, $h$ extends to a solution of $\mathcal{P}^{(k)}_{t}h=0$ on 
$\R^{n}\backslash\{0\}$ subject 
to $\delta_{t}h=0$. Let $\rho>0$, let $t_{0},t,\gamma_{0}(t)$ be as in Proposition
\ref{nolineargrowth}
 and let $\varphi$ be a $C^{\infty}$ 
function such that $\varphi|_{B_{\rho}(0)}\equiv 0$ and 
$\varphi|_{\R^{n}\backslash B_{2\rho}(0)}\equiv 1$.  Choose $p>n$
and a number $0<\gamma<\gamma_{0}(t)<1$, then 
$\varphi\delta h\in {W^{\prime}}^{0,p,0,1}_{\gamma-1}$ and 
$(1-\varphi)\delta h\in W^{0,p,0,1}_{-\gamma-1}$. Since $\gamma$ is 
nonexceptional it follows from Proposition \ref{propws1} that there 
exists $X_{1}\in {W^{\prime}}^{2,p,0,1}_{\gamma+1}$ such that 
$\Box X_{1}=\varphi\delta h$ and 
$X_{2}\in{W^{\prime}}^{2,p,0,1}_{-\gamma+1} $ such that 
$\Box X_{2}=(1-\varphi)\delta h$, therefore 
\begin{align}
\Box\left(X_{1}+X_{2}\right)=\delta h,
\end{align}
moreover, there exists $h_{0}$ such that $\delta h_{0}\equiv 0$, 
$\varphi h_{0}\in {W^{\prime}}^{1,p,0,2}_{\gamma}$,
$(1-\varphi) h_{0}\in {W^{\prime}}^{1,p,0,2}_{-\gamma}$,  and
\begin{align}
h=L_{(X_{1}+X_{2})}g_{0}+h_{0}~\mbox{on}~\R^{n}\backslash\{0\}.
\end{align}
Note that from the weighted Sobolev inequality (see \cite[Theorem 1.2]{bar}), 
we must have 
\begin{align}
h_{0}=O(r^{-\gamma}) ~\mbox{as}~ r\rightarrow 0, \ \mbox{ and }
h_{0}=O(r^{\gamma}) ~\mbox{as}~ r\rightarrow\infty\label{eqbdd2}.
\end{align}
Let $X=X_{1}+X_{2}$, from $\mathcal{P}_{t}^{(k)}h=0$ and 
$\delta_{t}h=0$, $h$ satisfies
\begin{align}\label{obtf}
\Delta^{k-1}\left(\frac{1}{2}\Delta^{2}h+\frac{1}{2}\nabla^{2} \delta \delta h +\Delta\delta^{*}\delta h\right)=0
\end{align}
on $\R^{n}\backslash\{0\}$. From diffeomorphism invariance, 
any Lie derivative $L_{X}g_{0}$ is in the kernel of the linearized operator,
and similarly, $R^{\prime}_{g_{0}}(L_{X}g_{0})=0$.
It follows that $L_{X}g_{0}$ satisfies 
\eqref{obtf} and so does $h_{0}$ on $\R^{n}\backslash\{0\}$. 
From $\delta h_{0}=0$, $h_{0}$ satisfies
\begin{align}
\Delta^{k+1}h_{0}&=0,\label{eqlapn1}
\end{align}
so we can expand $h_{0}$ in terms of homogeneous solutions 
of (\ref{eqlapn1}) on $\R^{n}\backslash\{0\}$ and by (\ref{eqbdd2}) 
and the proof of Proposition \ref{proplin1} we  
conclude that $h_{0}=\log(r) \cdot C+C^{\prime}$ where 
$C,C^{\prime}$ are matrices whose components 
are constant, but since 
$\delta h_{0}\equiv 0$ it follows from Proposition \ref{proplin1} that 
$C=0$ and $h_{0}$ is  constant 
in $\R^{n}$, in particular $h_{0}$ is a Lie derivative. We can now write  
$h=L_{Y}g_{0}$ where  $Y$ is a solution of 
$\Box_{t}Y=\delta_{t}L_{Y}g_{0}\equiv 0$ and clearly 
$Y$ is essentially linear in the sense of \eqref{lineargrowth} for some $\gamma$ 
with $0<\gamma<\gamma_{0}(t)$. But from Proposition \ref{nolineargrowth}, 
we know that for $t\ne 0$ sufficiently small, if any such solution 
$Y$ is non-zero then it must 
be dual to a Killing field which shows that $h\equiv 0$ as needed. 
In the case $n=3$, all solutions $\Delta^{2}h_{0}=0$ 
satisfying (\ref{eqbdd2}) are of the form $h_{0}=C+h_{1}$ where the components
of $C$ are constant and the components of $h_{1}$ are spherical harmonics of order 1,
however, if $\delta h_{0}\equiv 0$ then $h_{1}\equiv 0$ as seen in the proof
of Proposition \ref{proplin1}.
\end{proof}
\begin{remark} 
{\em
The argument in \cite[Corollary 3.7]{s} is 
incomplete since only radially parallel solutions are ruled 
out there. One must moreover rule out degenerate solutions 
(those with oscillatory behavior and possibly times a 
power of $\log$) which are {\em{not}} radially parallel.
}
\end{remark}
\subsection{Scaling and the nonlinear equation}\label{ene}
In this subsection we prove 
the nonlinear version of the Three Annulus Lemma. 
We assume that $(M^{n},g)$ is ALE of order~0, 
as in Definition \ref{ALEdef}.
In the following we use the ALE coordinate system 
to transfer the problem to $\left(\R^{n}\backslash B_{\rho}(0)\right)/\Gamma$. 
We have the following elliptic Schauder estimate for solutions
of~(\ref{eqsfm10}):
\begin{lemma}\label{lemdf1}
Let $0<a<d$ and let $h\in\ten^{0,2}_{m,\alpha;0}(A_{a,d})$ be a 
solution of  \eqref{eqsfm10}. 
There exists $\chi>0$ such that if $\|h\|_{\ten^{0,2}_{m,\alpha;0}(A_{a,d}(0))}<\chi$, 
then for every $b, c$ with $a<b<c<d$ one has
\begin{align}\label{scha}
\|h\|_{\ten^{0,2}_{m,\alpha;0}(A_{b,c}(0))}\le C|||h|||_{a,d},
\end{align}
with $C=C(\lambda_{1},\lambda_{2},n,m,\alpha,b-a,d-c,t)$, 
where $0 <\lambda_{1}\le\lambda_{2}$ are 
ellipticity constants of \eqref{eqsfm10}. 
\end{lemma}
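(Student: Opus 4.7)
The strategy is to treat \eqref{eqsfm10} as a quasilinear perturbation of the linear strictly elliptic operator $\mathcal{P}^{(k)}_{t}$ of order $2(k+1)$, and then invoke standard interior estimates for higher-order elliptic systems (Agmon--Douglis--Nirenberg). Inspecting the structure of $\mathcal{R}^{(k)}(h,g_{0})$ in \eqref{eqsfm7_1}, the only term containing $2(k+1)$ derivatives of $h$ is $(g_{0}+h)^{-1}*h*\nabla^{2(k+1)}h$, and this factor is of size $O(\|h\|_{C^{0}})$ at the principal-symbol level. Therefore, if $\chi$ is chosen sufficiently small, the modified principal operator
\begin{align*}
L[h] := \mathcal{P}^{(k)}_{t} + (g_{0}+h)^{-1}*h*\nabla^{2(k+1)}
\end{align*}
remains strictly elliptic, with ellipticity constants comparable to those of $\mathcal{P}^{(k)}_{t}$, namely the $\lambda_{1}, \lambda_{2}$ appearing in the statement.

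Moving the highest-order nonlinear term to the left, the equation \eqref{eqsfm10} becomes $L[h]h = Q(h)$, where
\begin{align*}
Q(h) := -\sum_{j=2}^{\mathcal{I}_{k}}(g_{0}+h)^{-j}*\sum_{\alpha_{1}+\cdots+\alpha_{j}=2(k+1)}\nabla^{\alpha_{1}}h*\cdots*\nabla^{\alpha_{j}}h.
\end{align*}
Since $j \ge 2$ and the $\alpha_{i}$ sum to $2(k+1)$, no single factor carries more than $2(k+1)-2$ derivatives of $h$, so $Q(h)$ is quadratic (and higher) in $h$ and its norm in $\ten^{0,2}_{m-2(k+1),\alpha;0}$ is bounded by a constant multiple of $\|h\|_{\ten^{0,2}_{m,\alpha;0}}^{2}$. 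With $h \in \ten^{0,2}_{m,\alpha;0}(A_{a,d}(0))$, the coefficients of $L[h]$ are in $C^{m-2(k+1),\alpha}$, so $L[h]$ is an honest linear elliptic operator to which Agmon--Douglis--Nirenberg applies. I would then use interior Schauder estimates on a chain of nested annuli $A_{b,c}(0) \subset A_{b',c'}(0) \subset A_{a,d}(0)$ to obtain
\begin{align*}
\|h\|_{\ten^{0,2}_{m,\alpha;0}(A_{b,c}(0))} \le C\bigl(\|Q(h)\|_{\ten^{0,2}_{m-2(k+1),\alpha;0}(A_{b',c'}(0))} + \|h\|_{L^{2}(A_{b',c'}(0))}\bigr),
\end{align*}
with $C$ depending on the ellipticity constants, $n$, $m$, $\alpha$, the separations $b-b'$ and $c'-c$, and on $t$ (which enters through the principal symbol of $\mathcal{P}^{(k)}_{t}$). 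Since $\|Q(h)\|_{\ten^{0,2}_{m-2(k+1),\alpha;0}} \le C\chi\,\|h\|_{\ten^{0,2}_{m,\alpha;0}}$, shrinking $\chi$ and iterating on a finite sequence of nested annuli filling the gap between $A_{b,c}(0)$ and $A_{a,d}(0)$ allows the quadratic contribution to be absorbed into the left-hand side.

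What remains is to replace the standard $L^{2}$ norm appearing on the right with the scale-invariant norm $|||h|||_{a,d}$. Since $A_{a,d}(0)$ has bounded radial ratio and $|||\cdot|||$ is essentially $\int r^{-1}\|h\|^{2}\,dr$, the two norms agree up to a constant depending only on $a$ and $d$, yielding \eqref{scha}. The principal obstacle is the bookkeeping for the absorption argument: one must simultaneously track how many intermediate annuli are needed, how the Schauder constants deteriorate at each step, and how the dependence on $t$ (through the ellipticity of $\mathcal{P}^{(k)}_{t}$) and on $\chi$ stays uniform. If $m < 2(k+1)$ one also needs a preliminary bootstrap, starting from the $L^{p}$-Calder\'on--Zygmund estimate to upgrade $h$ into a Sobolev space strong enough to feed into Schauder, but this is standard once ellipticity of $L[h]$ has been secured.
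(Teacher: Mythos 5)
Your proposal is essentially correct and takes the same approach as the paper, which simply invokes standard interior Schauder estimates for higher-order elliptic systems (the paper cites Eidel'man, Chapter~II) after observing that the nonlinear remainder $\mathcal{R}^{(k)}$ is at least quadratic in $h$ and so can be absorbed for $\|h\|_{m,\alpha;0}$ small; your bookkeeping concerns are handled by the standard absorption lemma for nested annuli once $C\chi<1$. The one point you omit, which the paper flags explicitly, is that the lower-order coefficients of $\mathcal{P}^{(k)}_t$ are negative powers of $r$ (coming from $i_{r^{-1}\partial/\partial r}$), so the Schauder estimate must be carried out in the scale-invariant weighted H\"older norms $|\cdot|_{m,\alpha;l}$ rather than the unweighted $C^{m,\alpha}$ — this is what makes the constant in \eqref{scha} depend only on the annular gaps $b-a$, $d-c$ and not on $a$, $d$ separately, a uniformity that is used later when the lemma is applied at scales $a\to\infty$ in Lemma~\ref{lemdf1_1}.
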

\begin{proof} The result follows from standard interior elliptic 
regularity estimates, see for example \cite[Chapter II]{eid}.
Note the leading order term is a power of the Laplacian, but 
lower order coefficients are negative powers of $r$. However, 
the Schauder estimate depends only on an appropriate weighted norm of the 
coefficients, which in this case is bounded, as one can easily verify.
\end{proof}
The following scaling lemma will be used to reduce the nonlinear
problem to the linear case. 
\begin{lemma}\label{lemdf1_1} Let $\{h_{i}\}$ be a sequence of solutions 
of \eqref{eqsfm10} satisfying
\begin{align}
\|h_{i}\|_{\ten^{0,2}_{m,\alpha;0}(A_{a,L^{3}a}(0))}<\chi_{i},
\end{align}
where $\{\chi_{i}\}$ is a sequence of positive numbers such that 
$\chi_{i}\rightarrow 0$. Suppose in addition that for some 
positive constant $C$ we have 
\begin{align}
|||h_{i}|||_{a,La}+|||h_{i}|||_{L^{2}a,L^{3}a}\le C|||h_{i}|||_{La,L^{2}a}.
\end{align}
Let $q_{i}=|||h_{i}|||_{La,L^{2}a}^{-1}h_{i}$, then on any annulus 
$A_{c,d}(0)$ with $a<c<d<L^{3}a$, there exists a subsequence $q_{i_{j}}$ 
that converges in $\ten^{0,2}_{m,\alpha^{\prime};0}(A_{c,d}(0))$ with 
$\alpha^{\prime}<\alpha$ to $\tilde{q}$ satisfying \eqref{eqdf3_5}.
\end{lemma}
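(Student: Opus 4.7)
The approach is a standard blow-up/compactness argument. First, I would normalize the sequence by setting $q_i := |||h_i|||_{La, L^2 a}^{-1} h_i$, so that by construction $|||q_i|||_{La, L^2 a} = 1$, and by the hypothesis $|||q_i|||_{a, La} + |||q_i|||_{L^2 a, L^3 a} \le C$. Since $\chi_i \to 0$, for $i$ sufficiently large the $C^{m,\alpha}$ norm of $h_i$ on $A_{a, L^3 a}(0)$ falls below the threshold $\chi$ of Lemma \ref{lemdf1}. Applying that interior Schauder-type estimate on a slightly larger annulus $A_{c',d'}(0)$ with $a < c' < c < d < d' < L^3 a$ yields
\begin{equation*}
\|h_i\|_{\ten^{0,2}_{m,\alpha;0}(A_{c',d'}(0))} \le C_1 |||h_i|||_{a, L^3 a}.
\end{equation*}
Dividing through by $|||h_i|||_{La, L^2 a}$ and using the hypothesized comparability of the $|||\cdot|||$-norms on the three annuli produces the uniform bound $\|q_i\|_{\ten^{0,2}_{m,\alpha;0}(A_{c',d'}(0))} \le C_2$.

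With this uniform $C^{m,\alpha}$ bound on the (closure of the) smaller annulus $A_{c,d}(0)$, the compact embedding $C^{m,\alpha} \hookrightarrow C^{m,\alpha'}$ for $\alpha' < \alpha$, i.e., the Arzel\`a--Ascoli theorem, furnishes a subsequence $q_{i_j}$ converging in $\ten^{0,2}_{m,\alpha';0}(A_{c,d}(0))$ to some limit $\tilde q$. To identify the limit equation, recall from Corollary \ref{corsfm1} that each $h_i$ satisfies $\mathcal{P}^{(k)}_t h_i + \RR^{(k)}(h_i, g_0) = 0$, hence after normalization
\begin{equation*}
\mathcal{P}^{(k)}_t q_i = -\frac{1}{|||h_i|||_{La, L^2 a}} \RR^{(k)}(h_i, g_0).
\end{equation*}
The explicit form \eqref{eqsfm7_1} shows that every term in $\RR^{(k)}(h_i, g_0)$ consists of a product of at least two derivative-factors of $h_i$ contracted with factors $(g_0 + h_i)^{-j}$, the latter being uniformly bounded since $h_i \to 0$ in $C^0$. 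Expanding one factor $\nabla^{\beta} h_i = |||h_i|||_{La, L^2 a} \nabla^{\beta} q_i$ and leaving the remaining factor(s) as derivatives of $h_i$, each with $C^{m,\alpha}$ norm of size $O(\chi_i)$, one obtains that $|||h_i|||_{La, L^2 a}^{-1} \RR^{(k)}(h_i, g_0) \to 0$ in $C^0$ (and in stronger norms). Passing to the limit in the equation along the subsequence yields $\mathcal{P}^{(k)}_t \tilde q = 0$, which is precisely \eqref{eqdf3_5}.

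The only mild obstacle is ensuring $m$ is large enough that Lemma \ref{lemdf1} controls all derivatives entering the remainder $\RR^{(k)}$, which involves terms up to $\nabla^{2(k+1)} h_i$; this is handled by choosing $m \ge 2(k+1)$ at the outset, as this is consistent with the hypotheses of the overarching theorems. The argument is otherwise a textbook blow-up/compactness extraction, and the key observation making it work is the at-least-quadratic structure of $\RR^{(k)}$ recorded in \eqref{eqsfm7_1}, which causes the normalization factor to beat the nonlinear error.
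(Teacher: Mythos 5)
Your proof is correct and follows essentially the same route as the paper: normalize by the middle-annulus norm, invoke Lemma \ref{lemdf1} (on $h_i$, then divide by $c_i = |||h_i|||_{La,L^2a}$) together with the three-annulus comparability to get a uniform $C^{m,\alpha}$ bound on $q_i$, extract a convergent subsequence by Arzel\`a--Ascoli, and then show the remainder $\RR^{(k)}$ is beaten by the normalization because its structure \eqref{eqsfm7_1} is at least quadratic in $h_i$ and its derivatives. The only cosmetic difference is in the bookkeeping for the remainder: you bound one factor by $c_i$ (via $q_i$) and the rest by $O(\chi_i)$, while the paper bounds $\|h_i\|_{m,\alpha;0}\le C' c_i$ via the Schauder estimate and concludes $\RR^{(k)}(c_i q_i, g_0) = O(c_i^2)$; both correctly use the quadratic-or-higher structure and the loss of $2(k+1)$ derivatives, which you handle by requiring $m\ge 2(k+1)$, exactly as the paper does by working in $\ten^{0,2}_{m-2(k+1),\alpha';0}$.
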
   
\begin{proof} The sequence $\{q_{i}\}$ satisfies
\begin{align}
|||q_{i}|||_{a,La}+|||q_{i}|||_{La^{2},L^{3}a}\le C,
\end{align}
\noindent
in particular, if $c,d$ as in the statement, we have from 
Lemma \ref{lemdf1} the inequality
\begin{align}
\|q_{i}\|_{\ten^{0,2}_{m,\alpha;0}(A_{c,d}(0))}\le C^{\prime},\label{eqdf3_8}
\end{align}
for some positive constant $C^{\prime}$. By the Arzela-Ascoli theorem 
there exists a subsequence $\{q_{i_{j}}\}$ that converges in 
$\ten^{0,2}_{m,\alpha^{\prime};0}(A_{c,d}(0))$ with $\alpha^{\prime}<\alpha$ 
to $\tilde{q}$. It only remains to prove that $\tilde{q}$ solves 
(\ref{eqdf3_5}). For that purpose we write (\ref{eqsfm10}) as 
\begin{align}
\mathcal{P}_{t}^{(k)}(c_{i}q_{i})+\mathcal{R}^{(k)}(c_{i}q_{i},g_{0})=0,\label{eqdf3_9}
\end{align}
where $c_{i}=|||h_{i}|||_{La,L^{2}a}$. Note that $c_{i}\rightarrow 0$ 
as $i\rightarrow\infty$. From the estimate (\ref{eqdf3_8}) and 
equation (\ref{eqsfm7_1}) it follows that
\begin{align}
\mathcal{R}^{(k)}(c_{i}q_{i},g_{0})
=O(c_{i}^{2})~\mbox{as}~c_{i}\rightarrow 0,\label{eqee1}
\end{align}
where the bound in the right-hand side of (\ref{eqee1}) is with 
respect to the norm in $\ten^{0,2}_{m-2(k+1),\alpha^{\prime};0}(A_{c,d}(0))$,  
so (\ref{eqdf3_9}) takes the form
\begin{align}
c_{i}\mathcal{P}^{(k)}_{t}(q_{i})
=O(c^{2}_{i})~\mbox{as}~c_{i}\rightarrow 0.\label{eqee2}
\end{align}
Passing to the subsequence $\{q_{i_{j}}\}$ we conclude that the 
limit $\tilde{q}$ satisfies $\mathcal{P}^{(k)}_{t}\tilde{q}=0$. 
\end{proof}
Using Lemma \ref{lemdf2} and Proposition \ref{propmod1} we have the following nonlinear version of the Three Annulus Lemma:
\begin{lemma}\label{lemdf3}
Let $\rho,t>0$ and let $h$ be a solution of \eqref{eqsfm10} on $A_{\rho,\infty}(0)$ with $\delta_{t}h=0$. 
Let $\beta^{\prime}>0$ and $L_{0}>1$ be as in Lemma \ref{lemdf2} 
and let $L,a>0$ be such that $L_{0}a>\rho$ and $L>L_{0}$ .  There exist 
$\chi=\chi(n,\lambda,\Lambda)>0$ so that if $|h|_{\ten^{0,2}_{m,\alpha;0}(A_{\rho,\infty}(0))}<\chi$, then if
\begin{align}
|||h|||_{La,L^{2}a}\ge L^{\beta^{\prime}}|||h|||_{a,La},\label{eqdf1}
\end{align}
then
\begin{align}
|||h|||_{L^{2}a,L^{3}a}\ge L^{\beta^{\prime}}|||h|||_{La,L^{2}a},\label{eqdf2}
\end{align}
and if
\begin{align}
|||h|||_{L^{2}a,L^{3}a}\le L^{-\beta^{\prime}}|||h|||_{La,L^{2}a},\label{eqdf3}
\end{align}
then
\begin{align}
|||h|||_{La,L^{2}a}\le L^{-\beta^{\prime}}|||h|||_{a,La}.\label{eqdf4}
\end{align}
Moreover, there exists $t_{0}>0$ such that if $0<|t|<t_{0}$ then at least one of 
\eqref{eqdf2},\eqref{eqdf4} must hold.
\end{lemma}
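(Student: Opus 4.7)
The plan is to argue by contradiction, using a compactness/blowdown argument that reduces the nonlinear three-annulus statement to its linear counterpart in Lemma \ref{lemdf2}. Suppose the first implication fails for every $\chi>0$. Then there exists a sequence $\{h_i\}$ of solutions of \eqref{eqsfm10} on $A_{\rho,\infty}(0)$ with $\delta_t h_i = 0$ such that $\|h_i\|_{\ten^{0,2}_{m,\alpha;0}(A_{\rho,\infty}(0))}\to 0$, and such that $|||h_i|||_{La,L^2a}\ge L^{\beta'}|||h_i|||_{a,La}$ yet $|||h_i|||_{L^2a,L^3a} < L^{\beta'}|||h_i|||_{La,L^2a}$. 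These two inequalities imply the hypothesis of Lemma \ref{lemdf1_1}, so after setting $q_i=|||h_i|||_{La,L^2a}^{-1}h_i$, a subsequence converges in $\ten^{0,2}_{m,\alpha';0}$ on any closed subannulus of $A_{a,L^3a}(0)$ to a limit $\tilde q$ with $\mathcal{P}^{(k)}_t\tilde q = 0$.

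Next I would verify that $\tilde q$ inherits the relevant structure. Since $\delta_t$ is linear and $\delta_t h_i \equiv 0$, also $\delta_t q_i\equiv 0$, and the convergence is strong enough (it includes at least one derivative) to pass this to the limit, so $\delta_t \tilde q=0$. Moreover, $C^{m,\alpha'}$ convergence on closed subannuli implies convergence of the cross-sectional $L^2$ norms, hence of the triple-bar norms: $|||q_i|||_{a,La}\to|||\tilde q|||_{a,La}$, and likewise on the other two annuli. By construction $|||\tilde q|||_{La,L^2a}=1$, $|||\tilde q|||_{a,La}\le L^{-\beta'}$, and $|||\tilde q|||_{L^2a,L^3a}\le L^{\beta'}$. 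In particular $\tilde q$ is nontrivial. By Proposition \ref{propmod1}, for $t\ne 0$ sufficiently small, $\tilde q$ has no degenerate component: $\tilde q^0\equiv 0$. Therefore $\tilde q$ satisfies the hypotheses of Lemma \ref{lemdf2}, and the inequality $|||\tilde q|||_{La,L^2a}\ge L^{\beta'}|||\tilde q|||_{a,La}$ forces $|||\tilde q|||_{L^2a,L^3a}\ge L^{\beta'}|||\tilde q|||_{La,L^2a}$, contradicting the strict reverse inequality produced above. The decay implication \eqref{eqdf3}$\Rightarrow$\eqref{eqdf4} is handled by the symmetric argument.

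For the trichotomy statement, assume again for contradiction that neither \eqref{eqdf2} nor \eqref{eqdf4} holds along a sequence with $\chi_i\to 0$; then both $|||h_i|||_{a,La}$ and $|||h_i|||_{L^2a,L^3a}$ are bounded by $L^{\beta'}|||h_i|||_{La,L^2a}$, so the hypothesis of Lemma \ref{lemdf1_1} is again satisfied. The same rescaling and compactness yields a nontrivial limit $\tilde q$ with $\mathcal{P}^{(k)}_t\tilde q=0$ and $\delta_t\tilde q=0$, again with $\tilde q^0\equiv 0$ by Proposition \ref{propmod1}. But Lemma \ref{lemdf2} asserts that for every such solution at least one of the two inequalities on three consecutive annuli must hold, which upon passing to the limit contradicts our standing assumption. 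The constant $t_0>0$ is the one furnished by Proposition \ref{propmod1}.

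The main technical obstacle I anticipate is ensuring that the normalized sequence $q_i$ really does converge strongly enough for the triple-bar norms on the three annuli to pass to the limit, and that the gauge condition $\delta_t q_i=0$ survives the limit. Both issues reduce to interior Schauder control as provided by Lemma \ref{lemdf1}: because $\mathcal{P}^{(k)}_t$ is strictly elliptic and the coefficients of lower order are controlled (as noted after Lemma \ref{lemdf1}), the $C^{m,\alpha}$ bound on each $q_i$ on slightly enlarged subannuli gives, via Arzel\`a--Ascoli, $C^{m,\alpha'}$ convergence on closed subannuli for $\alpha'<\alpha$, which is more than enough for both the $L^2$ cross-sectional convergence and for passing $\delta_t$ to the limit. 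The remaining subtlety is the careful bookkeeping of the nonlinear error $\mathcal{R}^{(k)}(c_i q_i, g_0) = O(c_i^2)$ from \eqref{eqsfm7_1}, which is precisely what is handled by Lemma \ref{lemdf1_1}; as $c_i=|||h_i|||_{La,L^2 a}\to 0$, dividing \eqref{eqsfm10} by $c_i$ produces a remainder of order $c_i$, vanishing in the limit and leaving exactly $\mathcal{P}^{(k)}_t\tilde q=0$.
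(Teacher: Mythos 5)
Your proposal follows the same route as the paper: you argue by contradiction, invoke the rescaling/compactness construction of Lemma~\ref{lemdf1_1} to extract a limit $\tilde q$ solving the linear equation $\mathcal{P}^{(k)}_t\tilde q=0$ with $\delta_t\tilde q=0$, rule out the degenerate component via Proposition~\ref{propmod1}, and then contradict the linear Three Annulus Lemma~\ref{lemdf2}. The paper's proof is a one-line pointer to exactly this argument, so you have correctly reconstructed and fleshed out its intended content. One point you should tighten: the sequence is chosen so that $|||q_i|||_{L^2a,L^3a}<L^{\beta'}$, but upon passing to the limit this degrades to the non-strict $|||\tilde q|||_{L^2a,L^3a}\le L^{\beta'}$, while Lemma~\ref{lemdf2} only delivers $|||\tilde q|||_{L^2a,L^3a}\ge L^{\beta'}$. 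Your phrase ``contradicting the strict reverse inequality'' is therefore not yet a contradiction, since equality is not excluded. This is a standard wrinkle in blow-down arguments and is resolvable here using the slack visible in the proof of Lemma~\ref{lemdf2}: that proof actually establishes the stronger estimate $|||h|||^2_{L^2,L^3}\ge 2L^{2\beta'}\bigl(|||h^+|||^2_{L,L^2}+|||h^-|||^2_{L,L^2}\bigr)$, or equivalently the conclusion with any $\beta''$ satisfying $\beta'<\beta''<\tfrac12\beta$. Applying the lemma with such a $\beta''$ yields $|||\tilde q|||_{L^2a,L^3a}\ge L^{\beta''}>L^{\beta'}$, which genuinely contradicts the limit bound. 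With that amendment the argument closes; the same remark applies to your treatment of the decay implication and of the trichotomy, where the analogous non-strict passage occurs.
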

\begin{proof} If any of the implications in the statement of the lemma fails we can use the rescaling construction in Lemma \ref{lemdf1_1} to  
produce a solution of the linearized equation that contradicts Lemma \ref{lemdf2}. 
\end{proof}
\subsection{Global divergence-free gauges}\label{gdf}
From (\ref{eqgdfale1in}) and (\ref{eqgdfale2in}) it follows that $\Psi_{*}g-g_{0}$ 
satisfies the estimate
\begin{align}
|\left(\Psi_{*}g-g_{0}\right)_{(r,x)}|_{m,\alpha;0}
=o(1)~\mbox{as}~r\rightarrow\infty.\label{eqgdf1}
\end{align}
for all $m\ge 0$. 
\begin{remark}\label{remale1} {\em
We do not need to assume decay on derivatives of arbitrary order in
\eqref{eqgdfale2in}. Our proof only requires 
that the estimate (\ref{eqgdfale2in}) be satisfied only for all 
multi-indices $l$ such that $1\le |l|\le 2(k+1)+1$. 
Then (\ref{eqgdf1}) holds for all $m\le 2(k+1)$. }
\end{remark}
\noindent
With the decay in (\ref{eqgdf1}) we can prove the existence of 
$\delta_{t}$-free gauges on certain annuli by means of the implicit 
function theorem. The following result is contained in \cite[Theorem 3.1]{ct} 
and does \emph{not} depend on the metric $g$ being $\Omega^{(k)}$-flat:

\begin{proposition}\label{propdf0} Let $t_{0}$ be as in Proposition 
{\em{\ref{nolineargrowth}}} 
and let $t$ be such that $0<|t|<t_{0}$. There exists $\chi=\chi(t,m)$ such that if 
$(C(N^{n-1}),g_{0})$ is a Ricci-flat cone and $\tilde{g}$ is a 
metric on $A_{c,\infty}(0)\subset C(N^{n-1})$  such that
\begin{align}
\|\tilde{g}-g_{0}\|_{\ten^{0,2}_{m,\alpha;0}(A_{c,\infty}(0))}<\chi,\label{eqdf0_1}
\end{align}
then there exists a diffeomorphism $\phi=\phi(\tilde{g})$, $\phi:A_{c,\infty}(0)\rightarrow A_{c,\infty}(0)$ such that
\begin{align}
\phi^{*}\tilde{g}\in\ten^{0,2}_{m,\alpha;0}(A_{c,\infty}(0)),\label{eqdf0_2}
\end{align}
and 
\begin{align}
\delta_{t}(\phi^{*}\tilde{g}-g_{0})=0.\label{eqdf0_3}
\end{align}
Moreover, if 
\begin{align}
\|\delta_{t}(\tilde{g})\|_{\ten^{0,1}_{m-1,\alpha;-1}(A_{c,\infty}(p))}<\epsilon,
\label{eqdf0_4}
\end{align}
then
\begin{align}
\|\phi^{*}\tilde{g}-\tilde{g}\|_{\ten^{0,2}_{m,\alpha;0}(A_{c,\infty}(p))}
<\delta(\epsilon),\label{eqdf0_5}
\end{align}
where $\delta(\epsilon)\searrow 0$ as $\epsilon\rightarrow 0$. 
\end{proposition}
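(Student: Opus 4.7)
The plan is to apply the implicit function theorem in a weighted Hölder setting, parametrizing near-identity diffeomorphisms of $A_{c,\infty}(0)$ by small vector fields. For $X$ sufficiently small in a suitable weighted norm, the time-one flow $\phi_X$ is a well-defined diffeomorphism of the end (possibly after a harmless cutoff near $r=c$). Define the smooth map
\begin{align*}
F: \ten^{0,2}_{m,\alpha;0} \times \ten^{1,0}_{m+1,\alpha;1} \longrightarrow \ten^{0,1}_{m-1,\alpha;-1}, \qquad F(\tilde g, X) := \delta_t\bigl(\phi_X^*\tilde g - g_0\bigr).
\end{align*}
Then $F(g_0, 0)=0$, and using the expansion $\phi_{sY}^* g_0 = g_0 + s\, L_Y g_0 + O(s^2)$ one computes
\begin{align*}
D_X F\big|_{(g_0, 0)}(Y) = \delta_t L_Y g_0 = \Box_t Y.
\end{align*}

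The crucial step is to show that $\Box_t$ is an isomorphism of these source and target spaces for $0<|t|<t_0$. Since $\Box_t$ is strictly elliptic, the weighted elliptic theory on conical ends (the Hölder analog of Proposition \ref{propws1}) provides a Fredholm statement at any weight not in the indicial set of $\Box_t$. The weight $1$ is \emph{a priori} exceptional for $\Box_0 = \Box$ because Killing fields of $\R^n$ of linear growth lie in its kernel, and this is precisely where Proposition \ref{nolineargrowth} intervenes: for $t\ne 0$ small, all essentially linear elements of $\ker\Box_t$ are duals of Killing fields of $\R^n$. On the Ricci-flat cone $C(N^{n-1})$ such Killing fields generically fail to descend, and when they do descend they form a finite-dimensional subspace that can be quotiented out by a standard finite-codimension modification. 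This yields invertibility of $\Box_t$ between the stated weighted Hölder spaces.

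Once invertibility is in hand, the implicit function theorem produces, for $\tilde g$ in a $\chi$-neighborhood of $g_0$, a unique small $X = X(\tilde g)$ with $F(\tilde g, X(\tilde g))=0$; setting $\phi = \phi_{X(\tilde g)}$ yields (\ref{eqdf0_2})--(\ref{eqdf0_3}). For the moreover part, one uses that the solution map $\tilde g \mapsto X(\tilde g)$ is Lipschitz in a neighborhood of $g_0$; combined with the identity $F(\tilde g, 0) = \delta_t(\tilde g - g_0)$, this gives $\|X(\tilde g)\| \le C\|\delta_t \tilde g\|$, and hence $\|\phi^*\tilde g - \tilde g\|_{\ten^{0,2}_{m,\alpha;0}} \le \delta(\epsilon)$ with $\delta(\epsilon)\searrow 0$ as $\epsilon \to 0$.

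The central obstacle is the invertibility of $\Box_t$ at the critical weight $1$, where the flat operator $\Box$ has a kernel coming from Killing fields. Proposition \ref{nolineargrowth} was proved precisely to resolve this by showing that for $t \ne 0$ small, this kernel does not thicken beyond the Killing locus; everything else reduces to standard Banach-space calculus together with weighted elliptic theory.
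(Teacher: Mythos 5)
The paper does not actually prove this proposition; it simply defers to Cheeger--Tian \cite[Theorem~3.1]{ct}, noting that the result does not depend on the field equation. Your overall strategy---parametrizing near-identity diffeomorphisms by vector fields, reducing the gauge condition to an implicit function theorem with linearization $\Box_t$, and using Proposition~\ref{nolineargrowth} to control the critical weight~$1$---is indeed the Cheeger--Tian route, so the structure of your argument is the right one.

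However, there is a genuine gap in the central claim. You assert that $\Box_t$ is an isomorphism between $\ten^{1,0}_{m+1,\alpha;1}$ and $\ten^{0,1}_{m-1,\alpha;-1}$ for $0<|t|<t_0$, but this is false: if $\xi$ is any Killing field of the cone, then $L_\xi g_0 = 0$ identically and hence $\Box_t\xi = \delta_t L_\xi g_0 = 0$ for \emph{every} $t$. Proposition~\ref{nolineargrowth} does not make the kernel trivial for $t\neq 0$; it only says the kernel of essentially linear growth consists \emph{exactly} of duals of Killing fields (the rate $c^+_1(t)-1$ in \eqref{ratetype1} is $1$ for all $t$). What the implicit function theorem actually requires is a bounded \emph{right} inverse, i.e.\ surjectivity of $\Box_t$ at weight~$1$. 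That is controlled by the cokernel, not the kernel---one must show the kernel of the formal adjoint $\Box_t^*$ at the dual weight is trivial, and $\Box_t$ is not formally self-adjoint, so this does not follow from Proposition~\ref{nolineargrowth} alone. Your phrase ``generically fail to descend'' and the appeal to ``a standard finite-codimension modification'' paper over exactly this point; the finite-dimensional kernel of Killing fields is not the obstruction (it is harmless since a Killing flow is an isometry and preserves the gauge condition), whereas a nontrivial cokernel would be fatal. A second, smaller issue: Proposition~\ref{nolineargrowth} as proved in the paper is for the Euclidean cone $\R^n=C(S^{n-1})$; the proposition you are proving is stated for a general Ricci-flat cone $C(N^{n-1})$, so its use requires an extension to that setting (for the flat cones $\R^n/\Gamma$ arising from ALE ends this is immediate, but the statement is broader).
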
 
\noindent
In our application of this Lemma,  we will simply let $\tilde{g} = \Psi_* g$, and
$g_0$ the flat metric on the Euclidean cone (recall Definition \ref{ALEdef}).  
We will then write $h = \phi^* \tilde{g} - g_0$. 
From Proposition \ref{propdf0} and using that $g$ is ALE of order 0 we have 
\begin{lemma}\label{cladf1} 
Let $h$ and $A_{c,\infty}(0)$ be as above, 
and let $L_{0}>0$ be as in Lemma \ref{lemdf2}. For any $L>L_{0}$ and any $a>c$ we have $\lim_{i\rightarrow\infty}|||h|||_{L^{i}a,L^{i+1}a}=0$, 
moreover, we have the inequality
\begin{align}
|||h|||_{L^{i}a,L^{i+1}a}\le L^{-\beta^{\prime}(i-i_{0})}|||h|||_{L^{i_{0}}a,L^{i_{0}+1}a}~\mbox{for all}~i\ge i_{0}.\label{eqgdf2}
\end{align}
\end{lemma}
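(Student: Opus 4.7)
The plan is to apply the nonlinear Three Annulus Lemma (Lemma \ref{lemdf3}) iteratively, using the ALE of order $0$ condition both to verify the smallness hypothesis and to rule out the possibility of growth.

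First, set $N_i := |||h|||_{L^i a, L^{i+1} a}$ for brevity. The limit $N_i \to 0$ is essentially immediate from (\ref{eqgdf1}): by the ALE of order $0$ condition, $\sup_{A_{L^i a, L^{i+1} a}} |h| \to 0$ as $i \to \infty$, and by the definition of $|||\cdot|||_{a,b}^{2} = \int_a^b r^{-1} \|h\|^2 \, dr$ together with the scale invariance of this norm, one has $N_i \leq C(L) \sup_{A_{L^i a, L^{i+1} a}} |h| \to 0$.

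Next, I would verify the smallness hypothesis of Lemma \ref{lemdf3}. Since $g$ is ALE of order $0$, the pulled-back tensor $h = \phi^* \tilde g - g_0$ produced via Proposition \ref{propdf0} satisfies $\|h\|_{\mathcal{T}^{0,2}_{m,\alpha;0}(A_{L^{i_0} a, \infty}(0))} < \chi$ once $i_0$ is sufficiently large, since (\ref{eqdf0_5}) shows the gauge-adjustment does not spoil smallness. The ``moreover'' clause of Lemma \ref{lemdf3} then asserts that at each step $j \geq i_0$, at least one of the following holds: the \emph{growth} alternative $N_{j+2} \geq L^{\beta'} N_{j+1}$, or the \emph{decay} alternative $N_{j+1} \leq L^{-\beta'} N_j$.

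The crux of the proof is ruling out the growth alternative, by a contradiction argument driven by $N_i \to 0$. Suppose decay fails at some $j \geq i_0$, so $N_{j+1} > L^{-\beta'} N_j$. Then growth must hold at step $j$: $N_{j+2} \geq L^{\beta'} N_{j+1}$. The first implication of Lemma \ref{lemdf3} now applies at step $j+1$ (its hypothesis is exactly what we just derived), giving $N_{j+3} \geq L^{\beta'} N_{j+2}$, and iterating yields
\begin{align*}
N_{j+k+2} \geq L^{k\beta'} N_{j+2} \quad \text{for all } k \geq 0.
\end{align*}
Since $N_i \to 0$, this forces $N_{j+2} = 0$. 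Then $N_{j+2} \geq L^{\beta'} N_{j+1}$ combined with nonnegativity gives $N_{j+1} = 0$, which contradicts $N_{j+1} > L^{-\beta'} N_j \geq 0$. Hence decay holds at every step $j \geq i_0$, and iterating $N_{j+1} \leq L^{-\beta'} N_j$ produces the advertised bound $N_i \leq L^{-(i - i_0) \beta'} N_{i_0}$ for all $i \geq i_0$.

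The only point requiring care is the smallness input to Lemma \ref{lemdf3}, which is where the ALE of order $0$ assumption enters crucially; once that is in hand, the iteration is a clean dichotomy argument. The argument also makes transparent why one cannot weaken the growth-propagation from Lemma \ref{lemdf3}: it is precisely this forward propagation that allows the tail condition $N_i \to 0$ to be converted into a pointwise decay statement.
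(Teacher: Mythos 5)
Your proof is correct and takes essentially the same approach as the paper: iterate the dichotomy from Lemma \ref{lemdf3} and use the ALE of order $0$ condition to rule out the growth branch. The only cosmetic difference is that you establish $N_i \to 0$ directly from \eqref{eqgdf1} and Proposition \ref{propdf0} and then cascade it backward to a contradiction, whereas the paper only extracts the weaker bound $N_{i+s} \le c_n \log L$ for large $s$ (which already contradicts exponential growth) and recovers $N_i \to 0$ as a consequence of \eqref{eqgdf2}.
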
   

\begin{proof} By Lemma \ref{lemdf3}, for any $i> i_{0}$ we have either
\begin{align}
|||h|||_{L^{i+1},L^{i+2}}\ge L^{\beta^{\prime}}|||h|||_{L^{i},L^{i+1}}\label{anngrowth},
\end{align} 
or
\begin{align}
|||h|||_{L^{i},L^{i+1}}\le L^{-\beta^{\prime}}|||h|||_{L^{i-1},L^{i}}\label{anndecay},
\end{align}
Suppose that we have (\ref{anngrowth}), then we conclude again from Lemma \ref{lemdf3} that we must have for any integer $s\ge 1$ the inequality
\begin{align}
|||h|||_{L^{i+s},L^{i+s+1}}\ge L^{\beta^{\prime}s}|||h|||_{L^{i},L^{i+1}}\label{powergrowth}.
\end{align}
Since $g$ is ALE of order 0, we can use (\ref{eqdf0_4}) in Proposition \ref{propdf0} 
to conclude that for $\epsilon>0$ sufficiently small and $s>1$ large we must have 
\begin{align}
\|\Psi_{*}g-g_{0}\|_{\ten^{0,2}_{m,\alpha;0}(A_{L^{i+s},L^{i+s+1}}(0))}&<\epsilon,\\
\|\phi^{*}\Psi_{*}g-\Psi_{*}g\|_{\ten^{0,2}_{m,\alpha;0}(A_{L^{i+s},L^{i+s+1}(0)})}&<\frac{1}{2},
\end{align}  
and therefore 
\begin{align}
\|h\|_{\ten^{0,2}_{m,\alpha;0}(A_{L^{i+s},L^{i+s+1}}(0))}<1.
\end{align}
In particular, we must have
\begin{align}
|||h|||_{L^{i+s},L^{i+s+1}}\le c_{n}\log(L),\label{logbound}
\end{align}
where $c_{n}$ is a dimensional constant. It is clear that (\ref{powergrowth}) contradicts (\ref{logbound}) for $s$ large. It follows that \eqref{anndecay} 
holds and then, by Lemma \ref{lemdf3} we must have (\ref{eqgdf2}).  
\end{proof}

\noindent
From Lemma \ref{cladf1} we have the following improvement in the ALE order of $g$:

\begin{corollary}\label{corgdf1}
If $g$ is 
ALE of order 0, scalar flat and (extended) obstruction-flat or satisfies 
\eqref{eqsrt11in},
then there exists an annulus of the form $A_{c^{\prime},\infty}(0)$ and a 
diffeomorphism 
$\phi:A_{c^{\prime},\infty}(0)\rightarrow A_{c^{\prime},\infty}(0)$ 
such that $|\phi^{*}\Psi_{*}g-g_{0}|_{m,\alpha;0}=O(r^{-\beta^{\prime}})$ as $r\rightarrow\infty$ and therefore, $g$ is ALE of order $\beta^{\prime}$.
\end{corollary}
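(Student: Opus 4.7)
The plan is to combine the gauge-fixing Proposition \ref{propdf0}, the geometric $L^2$-decay of Lemma \ref{cladf1}, and the interior Schauder estimate of Lemma \ref{lemdf1}, together with a standard rescaling argument to convert the averaged decay to pointwise Hölder decay.

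First I would use the ALE-of-order-zero hypothesis and \eqref{eqgdf1} to choose $c' > c$ large enough that $\|\Psi_* g - g_0\|_{\ten^{0,2}_{m,\alpha;0}(A_{c',\infty}(0))} < \chi$, with $\chi$ as in Proposition \ref{propdf0}; the decay of the derivatives in \eqref{eqgdfale2in} also makes $\|\delta_t(\Psi_*g)\|_{\ten^{0,1}_{m-1,\alpha;-1}(A_{c',\infty}(0))}$ arbitrarily small. Proposition \ref{propdf0} then produces a diffeomorphism $\phi$ of $A_{c',\infty}(0)$ with $h := \phi^*\Psi_* g - g_0$ satisfying $\delta_t h = 0$, and by \eqref{eqdf0_5} the tensor $h$ has small $\ten^{0,2}_{m,\alpha;0}$-norm on the annulus. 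Because $g$ is scalar-flat and (extended) obstruction-flat (or satisfies \eqref{eqsrt11in}), Corollary \ref{corsfm1} implies $h$ solves the nonlinear elliptic equation \eqref{eqsfm10}, so the smallness hypothesis of Lemma \ref{lemdf3}, and hence of Lemma \ref{cladf1}, is met on $A_{c',\infty}(0)$.

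Next, Lemma \ref{cladf1} applies directly: for any $L > L_0$ and $a > c'$ there is $i_0$ so that for every $i \ge i_0$,
\begin{align*}
|||h|||_{L^i a, L^{i+1}a} \le L^{-\beta'(i-i_0)}\, |||h|||_{L^{i_0}a, L^{i_0+1}a},
\end{align*}
so in particular $|||h|||_{L^i a, L^{i+1}a} \le C L^{-i\beta'}$ with $C$ independent of $i$. To upgrade this to pointwise Hölder decay, for any large $r$ choose $i$ with $L^i a \le r < L^{i+1}a$ and set $q_i := L^{-2i}(\psi_{L^i})^* h$, which is defined on the \emph{fixed} annulus $A_{L^{-1}, L^2}(0)$. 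The scale invariance of $|||\cdot|||$ gives $|||q_i|||_{L^{-1}, L^2} = |||h|||_{L^{i-1}a, L^{i+2}a} \le C L^{-i\beta'}$. The tensor $q_i$ satisfies the rescaled version of \eqref{eqsfm10}, whose linear part $\mathcal{P}^{(k)}_t$ is scale covariant and whose nonlinear remainder preserves the structure \eqref{eqsfm7_1}, with ellipticity constants and coefficients uniformly controlled in $i$. Applying Lemma \ref{lemdf1} to $q_i$ on the inner annulus $A_{1,L}(0)$ yields $\|q_i\|_{\ten^{0,2}_{m,\alpha;0}(A_{1,L}(0))} \le C' L^{-i\beta'}$, which unwinds to the pointwise bound $|h_{(r,x)}|_{m,\alpha;0} = O(r^{-\beta'})$, i.e., $g$ is ALE of order $\beta'$.

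The main technical obstacle is the last step: one must ensure that the Schauder constants in Lemma \ref{lemdf1} are uniform across all scales. This is precisely where the structure of the remainder \eqref{eqsfm7_1} is essential --- the rescaled remainder $\mathcal{R}^{(k)}(q_i, g_0)$ is bounded quadratically by $q_i$ and its derivatives, so as $i \to \infty$ the equation becomes a uniformly small perturbation of the linear equation $\mathcal{P}^{(k)}_t q_i = 0$ on the fixed annulus $A_{L^{-1}, L^2}(0)$, with perturbation size going to zero. Everything else is essentially bookkeeping in the weighted norms.
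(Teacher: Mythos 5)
Your proposal is correct and follows essentially the same route as the paper, which likewise cites Lemma \ref{lemdf1}, Proposition \ref{propdf0}, and Lemma \ref{cladf1} and concludes immediately. You have simply made explicit the rescaling bookkeeping that the paper leaves implicit, and your remark about the scale-uniformity of the Schauder constants is the correct reason the argument closes.
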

\begin{proof}
From  Lemma \ref{lemdf1}, Proposition \ref{propdf0},
and Lemma \ref{cladf1}, 
there exists a constant $C>0$ such 
that for any $(r,x)\in A_{c^{\prime},\infty}(0)$ with $r$ sufficiently large we have
\begin{align*}
|(\phi^{*}\Psi_{*}g-g_{0})_{(r,x)}|_{m,\alpha;0}\le C r^{-\beta^{\prime}},
\end{align*}
so the claim follows.  
\end{proof}

From \cite[Sections 2 and 3]{ct} we have 

\begin{proposition}\label{dftau}
Suppose $g$ is a metric defined on $\R^{n}\backslash B_{\rho}(0)$ and satisfies
\begin{align}
g-g_{0}=O(r^{-\tau}) ~\text{as}~r\rightarrow\infty,
\end{align}
then there exists a diffeomorphism $\phi:\R^{n}\backslash B_{\rho}(0)\rightarrow\R^{n}\backslash B_{\rho}(0)$ such that $h=\phi^{*}g-g_{0}$ satisfies $h\in\ten^{0,2}_{m,\alpha;-\tau}(\R^{n}\backslash B_{\rho}(0))$ and $\delta_{g_0} h=0$.
\end{proposition}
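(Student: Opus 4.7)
The plan is to construct $\phi$ as a small perturbation of the identity, $\phi(x) = x + X^{\sharp}(x)$ (which is a diffeomorphism of $\R^{n}\setminus B_{\rho}(0)$ for $X$ sufficiently small), where the $1$-form $X$ is determined by an implicit function theorem argument whose linearization is the operator $\Box$ of Section~\ref{divlie}. This is the standard Cheeger--Tian harmonic-slice construction, and the essential ingredient is the isomorphism property of $\Box$ on weighted Sobolev spaces recorded in Proposition~\ref{propws1}.

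We first reduce to the case that $-\tau$ is nonexceptional: by \eqref{exceptional1}--\eqref{exceptional2} the set $E$ is discrete, so if $-\tau \in E$ we replace $\tau$ by a slightly smaller nonexceptional value $\tau' \in (\tau - \varepsilon, \tau)$ and run the argument; the original decay rate is then recovered at the end by elliptic regularity applied to the resulting divergence-free equation. Enlarging $\rho$ if necessary, we also arrange that $\|g - g_0\|_{\ten^{0,2}_{m,\alpha;-\tau}}$ on $\R^{n} \setminus B_{\rho}(0)$ is small enough for the implicit function theorem.

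Fix $p > n$ and define
\begin{align*}
\mathcal{F}(X) := \delta_{g_0}\bigl((\mathrm{id} + X^{\sharp})^{*} g - g_0\bigr),
\end{align*}
as a map from a neighborhood of $0$ in ${W^{\prime}}^{2,p,0,1}_{-\tau+1}$ to ${W^{\prime}}^{0,p,0,1}_{-\tau-1}$. The decay hypothesis on $g - g_0$ and its derivatives gives $\mathcal{F}(0) = \delta_{g_0}(g - g_0)$ in the target space, and the linearization $D\mathcal{F}(0)[X] = \delta_{g_0} L_{X} g_0 = \Box X$ is an isomorphism by Proposition~\ref{propws1}. The inverse function theorem then produces $X$ with $\mathcal{F}(X) = 0$; setting $\phi = \mathrm{id} + X^{\sharp}$, the weighted Sobolev embedding (using $p > n$) gives $h = \phi^{*} g - g_0 \in \ten^{0,2}_{m,\alpha;-\tau}$ with $\delta_{g_0} h = 0$.

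The main technical step is verifying that the nonlinear remainder $\mathcal{F}(X) - \mathcal{F}(0) - \Box X$ lands in ${W^{\prime}}^{0,p,0,1}_{-\tau-1}$ and is quadratically small in $(X, g - g_0)$. Schematically this remainder takes the form $X * \nabla^{2}(g-g_0) + \nabla X * \nabla(g-g_0) + \nabla^{2} X * (g-g_0) + O(|X|^{2})$, plus further contributions from the expansion of $\phi^{*} g^{-1}$ via the identity $(g_0 + h)^{-1} - g_0^{-1} = -g_0^{-1} * h * (g_0 + h)^{-1}$ from \eqref{eqrem11}. The multiplicative properties of the weighted spaces, combined with smallness of $g - g_0$ rather than any improved product decay (since $\tau$ may be small relative to $1$), yield the required estimates on $\mathcal{F}$ and $D\mathcal{F}$; this is the routine but tedious step where the parameters $m$, $p$, $\alpha$ must be chosen compatibly to make the multiplication and trace-class estimates close up.
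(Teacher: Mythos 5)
Your overall strategy---construct a diffeomorphism $\phi = \mathrm{id} + X^{\sharp}$ via the implicit function theorem for the gauge-fixing map, inverting $\Box$ on weighted Sobolev spaces---is exactly the Cheeger--Tian construction, and since the paper's justification of Proposition~\ref{dftau} is simply a citation to \cite[Sections 2 and 3]{ct}, your route matches the intended one. However, three points need repair before the argument closes.

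First, and most significantly, Proposition~\ref{propws1} gives the isomorphism on $\R^{n}\setminus\{0\}$, but the metric is defined only on $\R^{n}\setminus B_{\rho}(0)$; on an exterior domain $\Box$ may have a nontrivial kernel or cokernel unless one imposes boundary conditions, so you cannot invoke the isomorphism directly. The standard fix is to multiply $g-g_{0}$ by a radial cut-off $\varphi$ (vanishing near $\partial B_{\rho}$, identically $1$ outside $B_{2\rho}$), solve $\Box X = \delta_{g_0}\bigl(\varphi(g-g_0)\bigr)$ on all of $\R^{n}\setminus\{0\}$, then restrict to $r>2\rho$ and enlarge $\rho$; this is the same extension device the paper uses in Lemma~\ref{lemev2}. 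You should state this step. Second, the linearization at $X=0$ is $D\mathcal{F}(0)[X]=\delta_{g_0}\bigl(L_{X}g\bigr)$, not $\Box X=\delta_{g_0}\bigl(L_{X}g_{0}\bigr)$; since $g$ is close to $g_{0}$ in the appropriate weighted norm this is a small perturbation of $\Box$ and still an isomorphism, but you should say so rather than identify the two. Third, your step recovering the decay rate $-\tau$ after working at a slightly smaller nonexceptional $\tau'$ is misstated: $\delta_{g_0} h=0$ by itself is a first-order underdetermined constraint, not an elliptic equation, so ``elliptic regularity applied to the resulting divergence-free equation'' does not apply. What one actually does is apply weighted elliptic regularity to $\Box X=\delta_{g_0}(g-g_{0})$ itself, which improves $X$ (hence $\phi^{*}g-g$) only up to the next exceptional value; if $-\tau+1 \in E$ the sharp conclusion $h\in\ten^{0,2}_{m,\alpha;-\tau}$ can fail and one should restrict to nonexceptional weights. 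In the paper's application $\tau=\beta^{\prime}$ is a small generic parameter, so this is harmless in context, but the reduction as you phrased it is not a proof.
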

We summarize the results of this section in the following corollary
\begin{corollary}\label{remgdf2} 
If $g$ is 
ALE of order 0, scalar flat and (extended) obstruction-flat or satisfies 
\eqref{eqsrt11in}, then there 
exists a diffeomorphism 
$\Phi:M\backslash K\rightarrow (\R^{n}\backslash B_{\rho}(0))\slash \Gamma$ 
for some $\rho>0$ and a compact set $K\subset M$ such that 
$h=\Phi_{*}g-g_{0}$ satisfies $h\in\ten^{0,2}_{m,\alpha;-\beta^{\prime}}$
and $\delta_{g_{0}}h=0$.
\end{corollary}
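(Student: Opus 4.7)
The plan is to simply combine the two main results of the section in sequence. First I would apply Corollary \ref{corgdf1} to the given metric $g$: using the hypothesis that $g$ is ALE of order $0$, scalar-flat, and either (extended) obstruction-flat or solving \eqref{eqsrt11in}, we obtain an ALE coordinate map $\Psi$ on $M \setminus K$ together with a diffeomorphism $\phi$ of an end $A_{c',\infty}(0)$ such that $\phi^{*}\Psi_{*}g - g_0 = O(r^{-\beta'})$ in the $\ten^{0,2}_{m,\alpha;0}$ norm. At this stage the gauge produced is the modified one $\delta_t(\phi^{*}\Psi_{*}g - g_0) = 0$, not the standard divergence-free gauge, but the key gain is the quantitative decay rate $\beta' > 0$, which upgrades $g$ from ALE of order $0$ to ALE of order $\beta'$.

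Next, with this improved decay in hand, I would invoke Proposition \ref{dftau}, which requires precisely the pointwise bound $g - g_0 = O(r^{-\tau})$ on $\R^n \setminus B_\rho(0)$ for some $\tau > 0$. Taking $\tau = \beta'$ and applying the proposition to $\phi^{*}\Psi_{*}g$ (after possibly enlarging $\rho$ so that the end sits inside $\R^n \setminus B_\rho(0)$), we obtain a further diffeomorphism $\phi_1$ of $\R^n \setminus B_\rho(0)$ such that $\phi_1^{*}\phi^{*}\Psi_{*}g - g_0 \in \ten^{0,2}_{m,\alpha;-\beta'}$ and is divergence-free with respect to the flat metric $g_0$. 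Composing everything, we set $\Phi = \phi_1^{*}\phi^{*} \circ \Psi$ (viewed as a map $M \setminus K \to (\R^n \setminus B_\rho(0))/\Gamma$ after descending by $\Gamma$), and then $h = \Phi_{*}g - g_0$ satisfies both required conclusions.

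The only point requiring care is the bookkeeping of domains and of the quotient by $\Gamma$: the two diffeomorphisms $\phi$ and $\phi_1$ are constructed on $\Gamma$-invariant annular ends of $\R^n$, and since both come from the implicit function theorem applied to $\Gamma$-invariant data (the pullback metric $\Psi_{*}g$ descends from the orbifold end), they are $\Gamma$-equivariant and hence descend to diffeomorphisms of the quotient. After possibly enlarging the compact set $K$ to absorb the region where $\phi$ and $\phi_1$ are not defined, the composition $\Phi$ is the desired diffeomorphism. There is no genuine obstacle here: the real work was done in Corollary \ref{corgdf1} (which invokes the nonlinear three-annulus lemma, the ruling out of degenerate solutions, and the ALE of order $0$ assumption) and in Proposition \ref{dftau} (standard Fredholm theory for $\delta_{g_0} \mathcal{L}$ on weighted spaces, cf.\ \cite[Sections 2 and 3]{ct}). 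Thus the corollary follows immediately.
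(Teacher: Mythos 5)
Your proposal is correct and matches the paper's intended argument: the paper introduces the corollary with ``We summarize the results of this section,'' and the chain you describe — Corollary~\ref{corgdf1} to upgrade the end to ALE of order $\beta'>0$, then Proposition~\ref{dftau} applied with $\tau=\beta'$ to pass from the $\delta_t$-gauge to a genuine $\delta_{g_0}$-free gauge in $\ten^{0,2}_{m,\alpha;-\beta'}$, then composition of the diffeomorphisms — is precisely how it follows. Your remarks on $\Gamma$-equivariance and enlarging $K$ are the right points of bookkeeping and do not introduce any gap.
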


\section{Optimal ALE order} 
\label{osec}
In this Section, we complete the proof of Theorems \ref{maint} and \ref{maint2}. 
\subsection{Weighted Sobolev spaces and $\Delta^{k+1}$}\label{ev}
In this section we state some properties of the weighted Sobolev spaces 
introduced in Section \ref{ws} that will be useful to improve the decay 
estimate for the metric $g$ derived in Section \ref{gdf}. Throughout this 
section we will work only with $(0,2)$ tensors so when we write 
${W^{\prime}}^{m,p}_{\delta}$ we actually mean the space 
${W^{\prime}}^{m,p,0,2}_{\delta}$. We start by defining the set 
of exceptional values for $\Delta^{k+1}$.

\noindent
\begin{definition}{\em
A number $\delta\in\R$ is said to be \emph{exceptional} for $\Delta^{k+1}$ 
if $\delta$ is in the set 
\begin{align}
E=\left\{
\begin{array}{ll}
\{j\in\mathbb{Z}:j\ne -1,-2,\ldots,2(k+1)-(n-1)\}&\mbox{if}~ n>2(k+1)\\
\mathbb{Z}&\mbox{if}~ n=2(k+1). 
\end{array}
\right.
\end{align}
We say that $\delta$ is {\em{nonexceptional}} if $\delta\in\R\backslash E$.}
\end{definition}

\begin{remark}{\em
The exceptional values for $\Delta^{k+1}$ correspond to the growth rates 
of solutions of  $\Delta^{k+1}h=0$ on the complement of a ball, however, 
when $n=2(k+1)$, as observed in the proof of Proposition \ref{proplin1},
there are solutions of $\Delta^{k+1}h=0$ on $\R^{n}\backslash\{0\}$ 
that are $O(\log(r))$ as $r\rightarrow\infty$. }
\end{remark}

\begin{lemma}\label{lemev1} If $\delta$ is nonexceptional, the map 
$\Delta^{k+1}:{W^{\prime}}^{2(k+1),p}_{\delta}
\rightarrow {W^{\prime}}^{0,p}_{\delta-2(k+1)}$ 
is an isomorphism.
\end{lemma}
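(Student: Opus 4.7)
The plan is to reduce the claim to $k+1$ iterated applications of Bartnik's isomorphism theorem for the Laplacian, \cite[Theorem 1.7]{bar}, which was already the key tool behind Proposition \ref{propws1}. Concretely, I would factor
\[
\Delta^{k+1} \colon {W^{\prime}}^{2(k+1),p}_{\delta} \xrightarrow{\;\Delta\;} {W^{\prime}}^{2k,p}_{\delta-2} \xrightarrow{\;\Delta\;} \cdots \xrightarrow{\;\Delta\;} {W^{\prime}}^{2,p}_{\delta-2k} \xrightarrow{\;\Delta\;} {W^{\prime}}^{0,p}_{\delta-2(k+1)}
\]
as a composition of $k+1$ Laplacians between successive weighted Sobolev spaces, and verify that each factor is an isomorphism with bounded inverse by Bartnik's theorem. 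The desired isomorphism and bounded inverse then follow by composition.

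The main step is the combinatorial check that if $\delta$ is nonexceptional for $\Delta^{k+1}$, then each intermediate weight $\delta - 2j$ (for $0 \leq j \leq k$) is nonexceptional for the Laplacian $\Delta$. The exceptional integers for $\Delta$ on $\R^{n}$ are those in $\{0,1,2,\ldots\} \cup \{\ldots,1-n,2-n\}$, so its nonexceptional integers form the range $\{3-n, \ldots, -1\}$. If $\delta \notin \mathbb{Z}$, then no $\delta - 2j$ is an integer and we are done automatically. If $\delta$ is a nonexceptional integer for $\Delta^{k+1}$ (which forces $n > 2(k+1)$ and $\delta \in \{2k+3-n, \ldots, -1\}$), then for every $0 \leq j \leq k$ one has
\[
3 - n \;=\; 2k+3-n - 2k \;\leq\; \delta - 2j \;\leq\; -1,
\]
placing $\delta - 2j$ in the nonexceptional range for $\Delta$. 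In the remaining cases $n = 2(k+1)$, as well as the low-dimensional case $n = 3$, $k = 1$, all integers are exceptional for $\Delta^{k+1}$, so only non-integer $\delta$ arise and the iteration is immediate.

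The main (and really only) obstacle is precisely this combinatorial bookkeeping at the extreme weights $\delta = 2k+3-n$ or $\delta = -1$: one must verify that the whole chain $\delta, \delta-2, \ldots, \delta-2k$ stays strictly inside the nonexceptional range $\{3-n, \ldots, -1\}$ for $\Delta$ before one can apply Bartnik's theorem at each stage. The explicit bounds above confirm this, so the chain of isomorphisms composes without obstruction and delivers the desired conclusion.
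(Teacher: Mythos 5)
Your proposal is correct and takes essentially the same approach as the paper, which gives only a one-line citation to Bartnik's Theorem 1.7. Since Bartnik's theorem is stated for the second-order Laplacian $\Delta$ rather than $\Delta^{k+1}$, some reduction is implicitly required; your factorization into $k+1$ copies of $\Delta$, together with the combinatorial check that nonexceptional for $\Delta^{k+1}$ forces every intermediate weight $\delta - 2j$ into the nonexceptional range $\{3-n,\ldots,-1\}$ for $\Delta$, is exactly the missing bookkeeping and is carried out correctly (including the degenerate cases $n = 2(k+1)$ and $n=3$, $k=1$, where all integers are exceptional so $\delta \notin \mathbb{Z}$). One minor point you could add for completeness: Bartnik's Theorem 1.7 itself only gives the isomorphism $\Delta\colon W'^{2,p}_{\delta'} \to W'^{0,p}_{\delta'-2}$, so the intermediate factors $\Delta\colon W'^{2m+2,p}_{\delta'} \to W'^{2m,p}_{\delta'-2}$ for $m\ge 1$ additionally require the standard weighted elliptic regularity estimate (also in Bartnik's paper) to upgrade the regularity order, but this is routine and does not affect the validity of the argument.
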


\begin{proof} See \cite[Theorem 1.7]{bar}.
\end{proof}

\begin{lemma}\label{lemev2} Suppose that $h$ is defined on 
$\R^{n}\backslash B_{\rho}(0)$ and satisfies $h=O(r^{\delta})$ as 
$r\rightarrow\infty$ and assume that 
$\Delta^{k+1}h=O(r^{\delta^{\prime}-2(k+1)})$ as $r\rightarrow\infty$ 
with $\delta^{\prime}<\delta$. 
Then, for any $\tau>0$ such that $\delta^{\prime}+\tau$ is nonexceptional 
there exists $h^{\prime}\in {W^{\prime}}^{2(k+1),p}_{\delta^{\prime}+\tau}$ 
and a ball $B_{\rho^{\prime}}(0)$ such that
\begin{align}
\Delta^{k+1}(h-h^{\prime})
=0~~\mbox{on}~\R^{n}\backslash B_{\rho^{\prime}}(0),\label{eqev1}
\end{align}
Furthermore, if $2(k+1)<n$, there exists an exceptional value 
$j\le\max\{\delta,\delta^{\prime}+\tau\}$ such that
\begin{align}
h-h^{\prime}=p_{j}+O(r^{j-1})~\mbox{as}~r\rightarrow\infty,\label{eqev2}
\end{align}
where $p_{j}$ is homogeneous of degree $j$ and satisfies 
$\Delta^{(k+1)}(p_{j})=0$ on $\R^{n}\backslash\{0\}$. If $n=2(k+1)$, 
we may also have
\begin{align}
h-h^{\prime}=A \cdot \log(r)+O(1)~\mbox{as}~r\rightarrow\infty,
\end{align}
where the components of $A$ are constant.

\end{lemma}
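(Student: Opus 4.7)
The plan is to invert $\Delta^{k+1}$ against a truncation of $\Delta^{k+1}h$ using Lemma \ref{lemev1}, which will produce $h'$ such that $h-h'$ is polyharmonic on the exterior of a ball, and then apply the classical spherical-harmonic expansion for solutions of $\Delta^{k+1}u=0$ outside a ball.

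First I would choose a cutoff $\chi\in C^{\infty}(\R^{n})$ with $\chi\equiv 0$ on $B_{\rho}(0)$ and $\chi\equiv 1$ on $\R^{n}\setminus B_{\rho''}(0)$ for some $\rho''>\rho$, and set $\rho'=\rho''$. The hypothesis $\Delta^{k+1}h=O(r^{\delta'-2(k+1)})$ together with a direct computation using definition \eqref{eqws1} shows that for any $\tau>0$ the extension-by-zero $\chi\Delta^{k+1}h$ lies in $W'^{0,p}_{\delta'+\tau-2(k+1)}$ (the integrability at infinity requires only $p\tau>0$, while there is no contribution from near the origin). Since $\delta'+\tau$ is nonexceptional, Lemma \ref{lemev1} furnishes a unique $h'\in W'^{2(k+1),p}_{\delta'+\tau}$ with $\Delta^{k+1}h'=\chi\Delta^{k+1}h$ on $\R^{n}$. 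On the set $\{\chi=1\}$ this gives $\Delta^{k+1}(h-h')=0$, establishing \eqref{eqev1}.

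For the asymptotic expansion of $u:=h-h'$, I would first invoke the weighted Sobolev embedding \cite[Thm.~1.2]{bar} to upgrade $h'\in W'^{2(k+1),p}_{\delta'+\tau}$ (for $p$ chosen large enough that $2(k+1)>n/p$) to the pointwise bound $h'=O(r^{\delta'+\tau})$, so that $u=O(r^{\max(\delta,\delta'+\tau)})$. Since $\Delta^{k+1}u=0$ on $\R^{n}\setminus B_{\rho'}(0)$, a separation-of-variables expansion in $r$ and spherical harmonics on $S^{n-1}$ yields, on each spherical mode, a constant-coefficient ODE whose characteristic exponents are precisely the integers in the exceptional set $E$ (this is the same computation used in the proof of Proposition \ref{proplin1}, cf.\ \eqref{hpex}--\eqref{eqlin6_2}). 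Summing the separated solutions and using the growth bound on $u$ forces only modes with exponent $\le\max(\delta,\delta'+\tau)$ to appear. Let $j$ be the largest such exceptional value; then $p_{j}$ is the homogeneous degree-$j$ component of the expansion, and the remainder is controlled by the next exceptional value below $j$, which is an integer $\le j-1$. This gives $u=p_{j}+O(r^{j-1})$ as claimed.

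In the critical dimension $n=2(k+1)$, the indicial equation on the radial $s=\log r$ variable admits a double root at $0$ on the zero mode, producing a $\log r\cdot A$ solution (with $A$ a constant symmetric 2-tensor), exactly as in the remark after the definition of exceptional values for $\Delta^{k+1}$; if this mode dominates then $u=A\log(r)+O(1)$. The main technical point to verify carefully is the pointwise-from-Sobolev step for $h'$ and the legitimacy of the termwise expansion of $u$ outside a ball in spherical harmonics; both are standard, and the latter follows by expanding the components of $u$ in an $L^{2}$ basis of spherical harmonics on each sphere $\{r\}\times S^{n-1}$ and solving the resulting decoupled ODEs on each mode.
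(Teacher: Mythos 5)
Your proof is correct and takes essentially the same approach as the paper: cut off near the origin, invert $\Delta^{k+1}$ in the weighted Sobolev space via Lemma~\ref{lemev1}, and then read off the asymptotics from the spherical-harmonic expansion of the resulting polyharmonic function outside a ball. The only cosmetic difference is that the paper applies $\Delta^{k+1}$ to $\varphi h$ rather than cutting off $\Delta^{k+1}h$ directly; the extra Leibniz terms are compactly supported, so both versions land in the same weighted $L^p$ space and yield an $h'$ with $\Delta^{k+1}(h-h')=0$ outside a compact set.
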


\begin{proof} Let $\varphi$ be a function in $C^{\infty}(\R^{n})$ such that 
$\varphi\equiv 0$ on $B_{\rho}(0)$ and $\varphi\equiv 1$ on 
$\R^{n}\backslash B_{2\rho}(0)$, then 
$\Delta^{k+1}\left(\varphi h\right)\in {W^{\prime}}^{0,p}_{\delta^{\prime}-2(k+1)+\tau}$ 
for every $\tau>0$. If we choose $\tau$ in such a way that 
$\delta^{\prime}+\tau$ is nonexceptional, by Lemma \ref{lemev1} there exists 
$h^{\prime}\in {W^{\prime}}^{2(k+1)}_{\delta^{\prime}+\tau}$ such that   
\begin{align}
\Delta^{(k+1)}\left(h\varphi-h^{\prime}\right)=0,
\end{align}
and on $\R^{n}\backslash B_{2\rho}(0)$ we have (\ref{eqev1}). 
The expansion (\ref{eqev2}) follows from the expansion at infinity of 
solutions of $\Delta^{k+1}\tilde{h}=0$ on $\R^{n}\backslash B_{1}(0)$ 
(compare with the proof of Proposition~\ref{propmod1}).   
\end{proof}

\subsection{Optimal decay}\label{od}
Suppose that $(M^{n},g)$ is ALE of order 0, scalar-flat, 
and either $\Omega^{(k)}$-flat or satisfies \eqref{eqsrt11in}. 
Corollary \ref{remgdf2} showed that $g$ is ALE of order $\beta'$
for some $\beta' > 0$. 
In the next proposition we obtain the optimal value for $\beta^{\prime}$
as stated in Theorems \ref{maint} and \ref{maint2}. 
\begin{proposition}\label{propopt1}
Let $h$ be as in Corollary {\em{\ref{remgdf2}}}.
If $g$ is $\Omega^{(k)}$-flat or satisfies \eqref{eqsrt11in}, then 
$h\in\ten^{0,2}_{m,\alpha;2k-n}$ and $g$ is ALE of order $n-2k$.  
\end{proposition}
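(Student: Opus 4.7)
The approach is to bootstrap the decay exponent of $h$, starting from the rate $\beta'>0$ supplied by Corollary~\ref{remgdf2} and essentially doubling it at each iteration until we reach the optimal rate $n-2k$. In the divergence-free gauge of Proposition~\ref{dftau} (i.e.\ the $t=0$ case of Corollary~\ref{corsfm1}), the scalar-flat and (extended) obstruction-flat system (respectively \eqref{eqsrt11in}) takes the form $\mathcal{P}^{(k)}_{0} h + \mathcal{R}^{(k)}(h,g_0) = 0$, which is schematically $\Delta^{k+1}h = \mathcal{R}^{(k)}(h,g_0)$ up to the constant $-\tfrac{c_{n,k}}{2(n-2)}$. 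The structural identity \eqref{eqsfm7_1} shows that every term of $\mathcal{R}^{(k)}$ is at least quadratic in $h$ and its derivatives up to order $2(k+1)$; hence if $h\in\ten^{0,2}_{m,\alpha;-\beta'}$, then $\mathcal{R}^{(k)}(h,g_0)\in\ten^{0,2}_{m-2(k+1),\alpha;-2\beta'-2(k+1)}$, so $\Delta^{k+1}h = O(r^{-2\beta'-2(k+1)})$.

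I would then invoke Lemma~\ref{lemev2} with $\delta = -\beta'$ and $\delta' = -2\beta'$: choosing $\tau>0$ small enough that $-2\beta'+\tau$ is nonexceptional and $\tau<\beta'$, this yields on the exterior of some ball a decomposition
\begin{align*}
h = h' + p_j + O(r^{j-1}),
\end{align*}
with $h'\in {W^{\prime}}^{2(k+1),p}_{-2\beta'+\tau}$ and $p_j$ a homogeneous solution of $\Delta^{k+1}p_j=0$ on $\R^n\setminus\{0\}$ of exceptional degree $j\le -\beta'$. The crucial observation is that $p_j$ is automatically divergence-free: from $\delta h = 0$ one has $\delta p_j = -\delta h' - \delta\bigl(O(r^{j-1})\bigr)$, but $\delta p_j$ is homogeneous of degree $j-1$ while the right-hand side is $O(r^{\min(-2\beta'+\tau-1,\,j-2)})$, which is of strictly lower order thanks to the choice $\tau<\beta'$; matching homogeneities forces $\delta p_j\equiv 0$. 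Proposition~\ref{proplin1} now applies to $p_j$: since $j<0$ the constant part vanishes, and the proof of that proposition rules out the homogeneous degrees $2(k+1)-n$ and $2(k+1)-n-1$ under the divergence-free condition, so either $p_j\equiv 0$ or $j\le 2k-n$. In either case $p_j = O(r^{2k-n})$, giving
\begin{align*}
h = O\bigl(r^{\max(-2\beta'+\tau,\,2k-n)}\bigr).
\end{align*}

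If $\beta'\ge (n-2k)/2$, then shrinking $\tau$ so that $-2\beta'+\tau\le 2k-n$ yields $h=O(r^{2k-n})$, and the corresponding control on derivatives $h\in\ten^{0,2}_{m,\alpha;2k-n}$ then follows from the weighted Schauder estimate Lemma~\ref{lemdf1} applied on dyadic annuli. Otherwise the decay rate has been strictly improved from $\beta'$ to nearly $2\beta'$, and I iterate; since the exponent essentially doubles at each step, finitely many iterations bring us into the regime $\beta'>(n-2k)/2$ where the previous case applies. The main obstacle will be bookkeeping: at each stage one must verify that $\tau$ can be chosen simultaneously nonexceptional and below the current $\beta'$, and one must justify (via elliptic regularity applied to the polyharmonic equation satisfied by $h-h'$) that the $O(r^{j-1})$ remainder has sufficiently regular decay of its divergence for the homogeneity-matching step to conclude $\delta p_j\equiv 0$.
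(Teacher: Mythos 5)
Your proposal is essentially correct and follows the same route as the paper's own argument: both bootstrap the decay using Lemma~\ref{lemev2} together with the observation that the divergence-free condition rules out the exceptional homogeneous solutions of degree $2(k+1)-n$ and $2k+1-n$ via (the proof of) Proposition~\ref{proplin1}. The paper carries out the case $2(k+1)=n$ explicitly, first bootstrapping until the remainder is $O(r^{-1-\epsilon})$, \emph{then} applying the homogeneity-matching argument once to conclude $\delta F_1\equiv 0$, and only then pushing to rate $-2$; for $2(k+1)<n$ it simply observes that the proof of Proposition~\ref{proplin1} already kills the two offending degrees. Your version interleaves the homogeneity-matching with the bootstrap at every step, which is fine and perhaps slightly more uniform. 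One small inaccuracy in your write-up: you attribute the strict order drop in $\delta p_j = -\delta h' - \delta(O(r^{j-1}))$ to the choice $\tau<\beta'$, but in fact it follows unconditionally, since the $\delta(O(r^{j-1}))$ term is $O(r^{j-2})$ and hence already of strictly lower degree than $j-1$ regardless of where $-2\beta'+\tau-1$ sits; the condition $\tau<\beta'$ is only needed to ensure $h'$ gives a genuine improvement in the fallback case where $p_j$ is subdominant. You correctly flag that one must use elliptic regularity for the polyharmonic remainder to control $\delta(O(r^{j-1}))$; that is indeed the only substantive gap to fill, and it is straightforward.
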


\begin{proof}
Let us treat first the case $(k+1)=\frac{n}{2}$. Since $\delta h=0$, 
$h$ satisfies (\ref{eqsfm10}) with $t=0$, i.e.  
\begin{align}
 \frac{c_{n,\frac{n}{2}-1}}{2(n-2)} \Delta^{\frac{n}{2}}h
=\mathcal{R}^{(\frac{n}{2}-1)}(h,g_{0}),\label{eqod7}
\end{align}
where $\mathcal{R}^{(\frac{n}{2}-1)}(h,g_{0})$ is given by equation (\ref{eqsfm7_1}). 
Since $h = O(r^{- \beta'})$ as $r \rightarrow \infty$, 
the least decaying terms in (\ref{eqsfm7_1}) 
are those terms containing $\nabla^{\alpha_{1}}h*\nabla^{\alpha_{2}}h$ with 
$\alpha_{1}+\alpha_{2}=n$, so from (\ref{eqod7}) we obtain 
$\Delta^{\frac{n}{2}}h=O(r^{-2\beta^{\prime}-n})$, and by Lemma (\ref{lemev2}), 
for any $\tau>0$ such that $-2\beta^{\prime}+\tau$ is nonexceptional there 
exists $h_{1}\in {W^{\prime}}^{n,p}_{-2\beta^{\prime}+\tau}$ such that 
$\Delta^{\frac{n}{2}}(h-h_{1})=0$ on the complement of some ball. From 
the weighted Sobolev inequality, if we take $p>n$ then $h_{1}=O(r^{-2\beta^{\prime}+\tau})$ 
as $r\rightarrow\infty$ and clearly we can assume that 
$-2\beta^{\prime}+\tau<-\beta^{\prime}$, so that both $h,h_{1}$ have pointwise 
decay at infinity but $h_{1}$ has a better decay at infinity than $h$. 
By Lemma \ref{lemev2}, and since $-1$ is the least negative exceptional 
value for $\Delta^{\frac{n}{2}}$, the difference $h-h_{1}$ has an expansion 
at infinity of the form
\begin{align}
h-h_{1}=F_{1}+O(r^{-2}),\label{eqod8}
\end{align}
where $F_{1}$ is a homogeneous solution of degree $-1$ of $\Delta^{\frac{n}{2}}h=0$ 
on $\R^{n}\backslash\{0\}$. From the proof of Proposition \ref{proplin1}, 
any such $F_{1}$ has the form 
\begin{align}
(F_{1})_{ij}(x)=u_{ij}\left(\frac{x}{|x|^{2}}\right),\label{eqod9}
\end{align}
where $u_{ij}$ are linear functions. We now claim that on the complement of 
some ball, $h$ satisfies
\begin{align}
h=F_{1}+O(r^{-1-\epsilon}),\label{eqod10}
\end{align}
for some $\epsilon>0$. If $-2\beta^{\prime}<-1$ we can choose $\tau>0$ 
sufficiently small so that $h$ satisfies (\ref{eqod10}). If not, we have 
$h=O(r^{-2\beta^{\prime}+\tau})$ with $-2\beta^{\prime}+\tau<-\beta^{\prime}$ 
and again from (\ref{eqod7}) it follows that 
$\Delta^{\frac{n}{2}}h=O(r^{-4\beta^{\prime}+2\tau})$ and we can argue as 
above to obtain $h=O(r^{-\min\{1,4\beta^{\prime}-2\tau^{\prime}\}})$ for some 
$\tau^{\prime}>0$ such that $-4\beta^{\prime}+\tau^{\prime}<-2\beta^{\prime}+\tau$. 
It is clear that we can use induction to obtain (\ref{eqod10}). Note that if 
$\delta F_{1}$ is not identically zero then $\delta F_{1}=O(r^{-2})$ as 
$r\rightarrow\infty$ but we do \emph{not} have $\delta F_{1}=O(r^{-2-\epsilon})$  
as $r\rightarrow\infty$ for any $\epsilon>0$, however, by (\ref{eqod10}) one has 
\begin{align}
\delta h=\delta F_{1}+O(r^{-2-\epsilon})~\mbox{as}~r\rightarrow\infty,
\end{align}
therefore, from $\delta h\equiv 0$ it follows that $\delta F_{1}\equiv 0$, 
and by Proposition \ref{proplin1}, $F_{1}\equiv 0$ which shows that 
$h=O(r^{-\gamma})$ with $\gamma>1$. By  (\ref{eqod7}) one obtains 
$\Delta^{\frac{n}{2}}h=O(r^{-2\gamma-n})$ and repeating the argument used 
to obtain (\ref{eqod10}) we can show that on the complement of some ball, 
$h$ has an expansion of the form
\begin{align}
h=F_{2}+O(r^{-2-\epsilon})~\mbox{as}~r\rightarrow\infty,\label{eqod11}
\end{align}
where $F_{2}$ is homogeneous of degree $-2$ and satisfies 
$\Delta^{\frac{n}{2}}F_{2}\equiv 0$ on $\R^{n}\backslash\{0\}$ and $\epsilon$ 
is some positive number. This proves that $h=O(r^{-2})$ as $r\rightarrow\infty$ 
as needed. For the case $2(k+1)<n$, the only difference with the previous proof 
is that we have to consider homogeneous solutions of $\Delta^{k+1}h\equiv 0$ on 
$\R^{n}\backslash\{0\}$ that decay like $r^{2(k+1)-n}$ and like $r^{2k+1-n}$,
but as shown in the proof of Proposition \ref{proplin1}, these solutions are 
\emph{not} divergence-free unless they are identically zero.       
\end{proof}

\section{Singularity removal theorems}\label{srt}

In this section we present the proofs of Theorems \ref{tr1} and \ref{tr2}.
\begin{lemma}\label{propsrt1}
Let $g=g_{0}+h$ be a metric defined on $B_{\rho}(0)\backslash\{0\}$ with constant 
scalar curvature, and assume that $g$ is either $\Omega^{(k)}$-flat 
or satisfies $\eqref{eqsrt11inorb}$. Suppose in addition that $\delta_{t}h=0$ 
on $B_{\rho}(0)\backslash\{0\}$. Then, on $B_{\rho}(0)\backslash\{0\}$, $h$ 
satisfies the equation
\begin{align}
\mathcal{P}^{(k)}_{t}h+\mathcal{R}^{(k)}(h,g_{0}) =0,
\label{eqsrt1}
\end{align}
where $\mathcal{P}^{(k)}_{t}$ and $\mathcal{R}^{(k)}(h,g_{0})$ have the same 
expressions as in \eqref{eqsfm6} and \eqref{eqsfm7_1} respectively. 
The operator $\mathcal{P}^{(k)}_{t}$ is elliptic.
\end{lemma}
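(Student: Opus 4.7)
The plan is to retrace the derivation that led to Corollary \ref{corsfm1}, noting that every step there was pointwise and algebraic in nature, and therefore transfers without change from an ALE end to the punctured ball $B_{\rho}(0)\setminus\{0\}$. First I would apply Proposition \ref{proprem1} to expand $\Omega^{(k)}(g_{0}+h)$ (or the left-hand side of \eqref{eqsrt11inorb}) in the form
\begin{align*}
\Omega^{(k)}(g_{0}+h)=c_{n,k}\Delta^{k-1}B_{g_{0}}^{\prime}(h)+F^{(k)}(h,g_{0}),
\end{align*}
exactly as in \eqref{eqrem10_2}. Imposing the gauge $\delta_{t}h=0$ reduces $\Delta^{k-1}B'_{g_0}(h)$ to the expression \eqref{eqsfm1}, with the trace of $h$ still appearing through $\nabla^{2}\Delta\tr(h)$ and $\Delta^{2}\tr(h)$.

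The only place the scalar-flat hypothesis entered previously was to eliminate these trace terms via \eqref{eqsfm3}. The key observation is that constant scalar curvature serves just as well. Indeed, linearizing $R(g_{0}+h)=c$ in the $\delta_{t}$-gauge and using \eqref{eqrem10_4} yields
\begin{align*}
\Delta\tr(h)=t\delta i_{r^{-1}\frac{\partial}{\partial r}}h+F^{\prime}(h,g_{0})-c.
\end{align*}
Since the additive constant $c$ is annihilated by both $\nabla^{2}$ and $\Delta$, it drops out of $\nabla^{2}\Delta\tr(h)$ and $\Delta^{2}\tr(h)$, and the substitution produces precisely the same operator $\mathcal{P}^{(k)}_{t}$ and the same error $\mathcal{E}^{(k)}(h,g_{0})$ as in \eqref{eqsfm4}--\eqref{eqsfm5}. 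Combining these with the full nonlinear expansion gives the equation \eqref{eqsrt1} with $\mathcal{R}^{(k)}(h,g_{0})$ having exactly the form \eqref{eqsfm7_1}. The very same reduction applies to the more general system \eqref{eqsrt11inorb}, using the second statement of Corollary \ref{corsfm1}, since the nonlinear terms $\nabla^{\alpha_{1}}Rm*\cdots*\nabla^{\alpha_{j}}Rm$ admit the same schematic expansion in $h$ and do not affect the linearization.

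Finally, ellipticity of $\mathcal{P}^{(k)}_{t}$ follows by inspection of \eqref{eqsfm6}: the principal part is a nonzero constant multiple of $\Delta^{k+1}$, and the $t$-dependent terms $\nabla^{2}\delta i_{r^{-1}\partial_{r}}h$ and $\Delta\delta^{*}i_{r^{-1}\partial_{r}}h$ are of order at most $2k+1$, so they do not contribute to the principal symbol. In short, I expect no substantive obstacle: the lemma is essentially a transcription of Corollary \ref{corsfm1} to the punctured ball, and the only bookkeeping point requiring verification is the trivial observation that the constant $c$ is killed by the outer derivatives before it can enter the final equation.
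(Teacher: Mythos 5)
Your proof is correct and takes essentially the same route as the paper: re-run the derivation of Corollary \ref{corsfm1} with $R(g_0+h)=c$ in place of $R(g_0+h)=0$, and observe that the constant $c$ is killed by the outer $\nabla^2$ and $\Delta$ when $\Delta\tr(h)$ is substituted into \eqref{eqsfm1}, so $\mathcal{P}^{(k)}_t$ and $\mathcal{R}^{(k)}$ come out unchanged. The paper's own proof leaves this step implicit ("the rest of the claim follows easily"); you have simply filled it in, and your ellipticity observation is also the intended one.
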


\begin{proof}
Suppose that $R(g_{0}+h)=c$ where $c$ is a constant. If we also have 
$\delta_{t}h=0$ on $B_{\rho}(0)\backslash\{0\}$, then from $R(g_{0}+h)-R(g_{0})=c$ 
we conclude that $h$ satisfies the equation
\begin{align}
\Delta\tr(h)=-c+t\delta i_{r^{-1}\frac{\partial}{\partial r}}h+F^{\prime}(h,g_{0}),
\label{eqsrt2}
\end{align}
on $B_{\rho}(0)\backslash\{0\}$. We now write the equation 
$\Omega^{(k)}(g_{0}+h)- \Omega^{(k)}(g_{0})=0$ as
\begin{align}
0=(\Omega^{(k)})^{\prime}_{g_0}(h)+F^{(k)}(h,g_{0}),\label{eqsrt3}
\end{align}
with $F^{(k)}(h,g_{0})$ as in (\ref{eqrem10}). Using (\ref{eqlin1}), 
we see that if we insert (\ref{eqsrt2}) into (\ref{eqsrt3}) then $h$ 
satisfies (\ref{eqsrt1}) on $B_{\rho}(0)\backslash\{0\}$. 
The rest of the claim follows easily, and the same argument 
works for $\eqref{eqsrt11inorb}$.
\end{proof}
Recalling the $C^{0}$-orbifold condition as defined in 
Definition \ref{c0def}, we now assume that there exists 
a coordinate system around the origin such that
\begin{align}
g_{ij}&=\delta_{ij}+o(1),\label{eqorb1}\\
\partial^{l}g_{ij}&=o(r^{-|l|}),\label{eqorb2}
\end{align}
for any multi-index $l$ with $|l|\ge 1$ as $r \rightarrow 0$. 
\begin{remark}
\label{ordrm2}
{\em
As in the ALE case (compare Remark \ref{remale1}), we do not need an 
assumption on derivatives of arbitrary order.  
If $l$ in (\ref{eqorb2}) only satisfies $|l|\le 2(k+1)+1$, 
then we have 
$|\left(g-g_{0}\right)_{(r,x)}|_{m,\alpha;0}=o(1)$ as $r\rightarrow 0$ 
for any $m\le 2(k+1)$, which is sufficient for our proof.} 
\end{remark}
Next, we state the existence of a divergence-free gauge.
\begin{lemma}\label{lemsrtdf} Suppose that $g$ defined on $B_{\rho}(0)$ has 
constant scalar curvature and is (extended) obstruction-flat or
satisfies $\eqref{eqsrt11inorb}$. 
Suppose also that the origin is a $C^{0}$-orbifold 
point for $g$. 
Then for some $\rho^{\prime}<\rho$ there exists a diffeomorphism 
$\phi:B_{\rho^{\prime}}(0)\backslash\{0\}\rightarrow B_{\rho^{\prime}}(0)\backslash\{0\}$ such that $\delta_{g_{0}}\phi_{*}g=0$ on $B_{\rho^{\prime}}(0)\backslash\{0\}$. 
Moreover, there exists $\sigma>0$  such that $|\phi_{*}g - g_0|=O(r^{\sigma})$ as 
$r\rightarrow 0$ and $\partial^{l}\phi_{*}g=O(r^{\sigma-|l|})$ for any multi-index 
$l$ with $|l|\ge 1$.
\end{lemma}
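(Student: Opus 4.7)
The plan is to mirror, symmetrically between infinity and the origin, the chain of results in Sections \ref{df}--\ref{gdf}. First I would establish the singular analog of Proposition \ref{propdf0}: given $t$ with $0<|t|<t_0$ and the $C^{0}$-orbifold hypothesis (to the finite order noted in Remark \ref{ordrm2}), there is a diffeomorphism $\psi$ of $B_{\rho'}(0)\setminus\{0\}$ for some $\rho'<\rho$ such that $\tilde g:=\psi_*g$ satisfies $\delta_t(\tilde g-g_0)=0$ and $\|\tilde g-g_0\|_{\ten^{0,2}_{m,\alpha;0}}$ remains $o(1)$ as $r\to 0$. This is an implicit-function-theorem construction formally identical to \cite{ct}, since the indicial analysis for $\Box_t$ and the weighted Fredholm statement of Proposition \ref{propws1} is invariant under $r\leftrightarrow r^{-1}$. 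Setting $h=\tilde g-g_0$, Lemma \ref{propsrt1} then gives that $h$ solves the elliptic equation \eqref{eqsrt1} on $B_{\rho'}(0)\setminus\{0\}$, putting us in exactly the setup in which the linear and nonlinear Three Annulus Lemmas (Lemmas \ref{lemdf2} and \ref{lemdf3}) apply on annuli $A_{c,d}(0)\subset B_{\rho'}(0)\setminus\{0\}$, and in which degenerate solutions have already been ruled out by Proposition \ref{propmod1}.

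Next I would iterate the nonlinear Three Annulus Lemma inward. Fix $L>L_0$ as in Lemma \ref{lemdf3} and $a>0$ small. By the $C^0$-orbifold decay, for every $i\ge i_0$ sufficiently large the smallness hypothesis $\|h\|_{\ten^{0,2}_{m,\alpha;0}(A_{L^{-i-1}a,L^{-i+2}a}(0))}<\chi$ of Lemma \ref{lemdf3} holds, and moreover one has the analog of \eqref{logbound},
\begin{align*}
|||h|||_{L^{-i-1}a,L^{-i}a}\le c_n \log(L),\qquad \text{with}\quad |||h|||_{L^{-i-1}a,L^{-i}a}\to 0 \ \text{as } i\to\infty.
\end{align*}
If on some annulus the growth alternative \eqref{eqdf1}--\eqref{eqdf2} held when iterated inward (i.e.\ with $L$ replaced by $L^{-1}$), then Lemma \ref{lemdf3} would propagate it to all smaller scales and force $|||h|||_{L^{-i-1}a,L^{-i}a}\ge L^{\beta'(i-i_0)}|||h|||_{L^{-i_0-1}a,L^{-i_0}a}$, contradicting the above $L^2$-smallness for $i$ large. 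Hence the decay alternative must hold for all sufficiently inner annuli, giving
\begin{align*}
|||h|||_{L^{-i-1}a,L^{-i}a}\le L^{-\beta'(i-i_0)}\,|||h|||_{L^{-i_0-1}a,L^{-i_0}a},\qquad i\ge i_0.
\end{align*}
Together with the Schauder estimate of Lemma \ref{lemdf1} on each annulus and the scale invariance of $|||\cdot|||$, this upgrades to the pointwise bound $|\partial^l h|=O(r^{\sigma-|l|})$ for $\sigma:=\beta'>0$ and $0\le|l|\le m$.

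Finally, to pass from the $\delta_t$-gauge to the $\delta_{g_0}$-gauge, I would apply the punctured-ball analog of Proposition \ref{dftau}: with $h=O(r^\sigma)$ and (after shrinking $\sigma$ slightly if necessary) $\sigma$ nonexceptional for $\Box$, the divergence $\delta_{g_0}h$ lies in the appropriate weighted space on $B_{\rho'}(0)\setminus\{0\}$, and Proposition \ref{propws1} (whose indicial roots are the same near $r=0$) produces a $1$-form $\xi$ with $\Box\xi=\delta_{g_0}h$ in a weighted space one order better. Solving away $\xi$ by the flow of its dual vector field gives a diffeomorphism $\phi$ of a possibly smaller punctured ball with $\delta_{g_0}(\phi_*g-g_0)=0$ and preserving the pointwise decay $|\phi_*g-g_0|=O(r^\sigma)$ together with its derivative estimates. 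The main obstacle is the inward iteration step: one must be sure the Three Annulus dichotomy can be applied on annuli shrinking to the puncture, which uses in an essential way both that Proposition \ref{propmod1} rules out degenerate solutions on \emph{any} annulus $A_{c,d}(0)$ and that the $C^0$-orbifold condition provides the analog of \eqref{logbound} needed to close off the growth alternative at the origin.
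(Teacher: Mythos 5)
Your proposal is correct and follows exactly the route the paper takes: the paper's proof of Lemma \ref{lemsrtdf} is literally the single sentence ``This follows from a straightforward modification of the proof of Corollary \ref{remgdf2}'', and your write-up is precisely the explication of that modification (gauge fixing via the analog of Proposition \ref{propdf0}, inward iteration of the Three Annulus Lemma, passage to the $\delta_{g_0}$-gauge via the analog of Proposition \ref{dftau}). The one place where your phrasing is slightly imprecise is ``iterated inward (i.e.\ with $L$ replaced by $L^{-1}$)'': the Three Annulus Lemma is stated only for $L>1$, so what one actually does is observe that the \emph{other} alternative \eqref{eqdf4} (decay of $|||h|||$ from the inner to the middle annulus, equivalently geometric growth going \emph{toward} the origin, i.e.\ $h^-$ dominating) cannot hold at any scale near the puncture — else Lemma \ref{lemdf3} propagates it inward and contradicts the analog of \eqref{logbound} — so \eqref{eqdf2} holds at every sufficiently small scale, which is the desired geometric decay of $|||h|||$ as $r\to 0$. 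This is the same logic you describe, so the gap is purely one of wording.
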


\begin{proof} This follows from a straightforward modification 
of the proof of Corollary~\ref{remgdf2}. 
\end{proof}

\noindent
We will also need the following

\begin{lemma}\label{lemlie} If the components of $h\in S^{2}(T^{*}\R^{n})$ are linear functions then $h=L_{X}g_{0}$ for some quadratic vector field. 
\end{lemma}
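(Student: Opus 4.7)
The plan is to solve for $X$ by a direct Koszul-type computation, analogous to the derivation of the Christoffel symbols from a metric. Write the components of $h$ as
\begin{align*}
h_{ij}(x) = \sum_{k=1}^{n} a_{ijk}\, x_{k},
\end{align*}
where the constants $a_{ijk}$ satisfy $a_{ijk}=a_{jik}$ by the symmetry of $h$. Since $g_{0}$ is the flat Euclidean metric, I identify $X^{i}$ with $X_{i}$ and seek a quadratic vector field of the form
\begin{align*}
X^{i}(x) = \tfrac{1}{2}\sum_{j,k=1}^{n} c_{ijk}\, x_{j}x_{k}, \qquad c_{ijk}=c_{ikj}.
\end{align*}

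A direct calculation gives $(L_{X}g_{0})_{ij}=\partial_{i}X_{j}+\partial_{j}X_{i} = \sum_{k}(c_{jik}+c_{ijk})\,x_{k}$, so the equation $L_{X}g_{0}=h$ reduces to the algebraic system
\begin{align*}
c_{jik} + c_{ijk} = a_{ijk} \qquad \text{for all } i,j,k,
\end{align*}
subject to the constraint $c_{ijk}=c_{ikj}$. This is formally identical to the problem of recovering the Christoffel symbols from $\partial g$, and is solved by the Koszul-type formula
\begin{align*}
c_{ijk} = \tfrac{1}{2}\bigl( a_{ijk} + a_{ikj} - a_{jki}\bigr).
\end{align*}

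The verification consists of two routine checks. First, using $a_{ijk}=a_{jik}$ one computes
\begin{align*}
c_{jik}+c_{ijk} = \tfrac{1}{2}\bigl(a_{jik}+a_{jki}-a_{ikj}+a_{ijk}+a_{ikj}-a_{jki}\bigr) = \tfrac{1}{2}\bigl(a_{jik}+a_{ijk}\bigr) = a_{ijk},
\end{align*}
so $L_{X}g_{0}=h$. Second, the required symmetry $c_{ijk}=c_{ikj}$ follows by swapping $j\leftrightarrow k$ in the defining formula, since $a_{jki}=a_{kji}$ (another instance of $a_{\bullet\bullet k}$ being symmetric in the first two slots). There is no real obstacle here; the only subtlety is the bookkeeping to ensure the correct symmetrization is chosen, and the Koszul formula above is the canonical choice that both solves the equation and respects the symmetry of $X$ in $j,k$.
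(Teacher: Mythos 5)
Your proof is correct, and it takes a genuinely different route from the paper's. The paper argues abstractly: it observes that the linear map $\mathcal{L}:\Gamma^{1}_{2}\rightarrow S^{2}_{1}$, $X\mapsto L_{X}g_{0}$, goes between spaces of equal (finite) dimension, and that its kernel is trivial because there are no quadratic Killing fields on $\R^{n}$; hence $\mathcal{L}$ is an isomorphism and in particular surjective. (Incidentally, the common dimension is $\tfrac{n^{2}(n+1)}{2}$; the factor $(n-1)$ printed in the paper is a typo, though this does not affect the argument since the two typos cancel.) You instead solve the system directly, producing the explicit Koszul-type inverse
\begin{align*}
c_{ijk}=\tfrac{1}{2}\bigl(a_{ijk}+a_{ikj}-a_{jki}\bigr),
\end{align*}
and verifying both $c_{jik}+c_{ijk}=a_{ijk}$ and $c_{ijk}=c_{ikj}$ by hand. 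Your verification checks out: the equation $(L_{X}g_{0})_{ij}=\sum_{k}(c_{jik}+c_{ijk})x_{k}$ is correct for the flat metric, and the two symmetry manipulations use only $a_{ijk}=a_{jik}$. What your approach buys is a constructive, self-contained proof that does not invoke the absence of quadratic Killing fields (and, as a bonus, yields surjectivity and injectivity of $\mathcal{L}$ simultaneously, since you exhibit a two-sided inverse). What the paper's approach buys is brevity: once one accepts the standard fact that Euclidean Killing fields are affine, no index bookkeeping is needed. Either proof is sound; yours would be a slightly longer but more elementary replacement.
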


\begin{proof}
Let $S^{2}_{1}$ be the subspace of $S^{2}(T^{*}\R^{n})$ consisting of all elements whose components are linear functions. If $h\in S^{2}_{1}$,  we can write the components of $h$ as
\begin{align}
h_{ij}(x)=\sum_{l=1}^{n}A_{ijl}x_{l},
\end{align}
where $A_{ijk}$ is symmetric in $i,j$ and therefore $\dim(S^{2}_{1})=\frac{n^{2}(n-1)}{2}$. On the other hand, if we let $\Gamma^{1}_{2}$ be the space of all vector fields whose components are functions which are homogeneous of degree 2, then any $X\in\Gamma^{1}_{2}$ can be written as $X=X_{i}\frac{\partial}{\partial x^{i}}$ where
\begin{align}
X_{i}(\xi)=\sum_{l,m}a_{ilm}\xi_{l}\xi_{m},
\end{align}  
with $a_{ilm}$ symmetric in $l,m$, and therefore $\dim\left(\Gamma^{1}_{2}\right)=\frac{n^{2}(n-1)}{2}=\dim(S^{2}_{1})$. Since there are no quadratic Killing vector fields, the map $\mathcal{L}:\Gamma^{1}_{2}\rightarrow S^{2}_{1}$ defined by $\mathcal{L}(X)=L_{X}g_{0}$ is an isomorphism.
\end{proof}

\begin{lemma}\label{lemode}
Let $X$ a vector field that is homogeneous of degree 2 and let $K_{X}$ be the 
diffeomorphism generated by taking the flow of $X$ to time 1 (which exists for 
$r$ sufficiently small). If $g_{0}$ 
is the Euclidean metric we have
\begin{align}
K_{X}^{*}g_{0}-L_{X}g_{0}-g_{0}=O(r^{2})~\mbox{as}~r\rightarrow 0.
\end{align}
\end{lemma}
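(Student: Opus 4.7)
My approach is to carry out a second-order expansion of $\phi_t^* g_0$ in the flow parameter, exploiting the fact that $L_X$ raises the homogeneity degree by exactly one when $X$ is homogeneous of degree $2$. The key identity I will use is
\[
K_X^* g_0 - g_0 - L_X g_0 \;=\; \int_0^1 \int_0^t \phi_s^*\bigl(L_X^2 g_0\bigr)\,ds\,dt,
\]
which comes from applying $\tfrac{d}{dt}\phi_t^* T = \phi_t^*(L_X T)$ to $T = g_0$ once, and then to $T = L_X g_0$ once more inside the resulting integral. The residual is then shown to be $O(r^2)$ by a degree count.

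\textbf{Step 1 (flow).} Since $X$ is polynomial and homogeneous of degree $2$ we have $X(0)=0$ and $|X(y)| \le C|y|^2$, so standard ODE theory gives a small $\rho>0$ such that $\phi_t(x)$ exists for all $t\in[0,1]$ and $|x|<\rho$, with $|\phi_t(x)-x|\le C|x|^2$ and $\partial_i \phi_t^k(x) = \delta_i^k + O(|x|)$ uniformly in $t\in[0,1]$ (the latter from the variational equation). In particular $|\phi_s(x)| \le 2|x|$ for $|x|$ small.

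\textbf{Step 2 (degree count and conclusion).} A direct inspection of the Lie derivative formula shows that if $X$ is homogeneous of degree $2$ and $T$ is a tensor with components homogeneous of degree $d$ (in the Cartesian coordinates), then $L_X T$ has components homogeneous of degree $d+1$: each term involves either $X^k \partial_k T$ (raising by $2-1=1$) or $T\cdot \partial X$ (raising by $1$). Starting from $g_0$, whose components are of degree $0$, this yields that $L_X^2 g_0$ has components homogeneous of degree $2$, so $|L_X^2 g_0|(y) \le C|y|^2$. Combining with the flow estimate of Step 1,
\[
\bigl|\phi_s^*(L_X^2 g_0)\bigr|(x) \;\le\; C\,|L_X^2 g_0|(\phi_s(x))\,(1+O(|x|))^2 \;\le\; C'|x|^2
\]
uniformly in $s,t\in[0,1]$. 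Integrating over the square $(s,t)\in[0,1]^2$ preserves this pointwise bound and yields $K_X^* g_0 - g_0 - L_X g_0 = O(r^2)$ as required.

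\textbf{Main obstacle.} The proof is essentially a bookkeeping exercise; the only point requiring care is confirming that the pullback by $\phi_s$ preserves the $O(r^2)$ class uniformly in $s\in[0,1]$, which rests on the uniform Grönwall-type control of $\phi_s$ and $D\phi_s$ established in Step 1.
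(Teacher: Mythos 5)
Your proof is correct and follows essentially the same route as the paper's: expand $\phi_t^*g_0$ to second order in $t$ and show the second-order term is $O(r^2)$ by a homogeneity count. The only difference is presentational: you package the remainder via the Lie-derivative identity $\frac{d}{dt}\phi_t^*T=\phi_t^*(L_X T)$, obtaining the double integral of $\phi_s^*(L_X^2 g_0)$ and counting degrees on $L_X^2 g_0$ directly, whereas the paper uses the Taylor remainder bounded by $\sup_s|\partial_s^2(\phi_s^*g_0)|$ and then expands $\partial_s^2(\phi_s^*g_0)$ in coordinates by the chain rule before counting degrees. Since $\partial_s^2(\phi_s^*g_0)=\phi_s^*(L_X^2 g_0)$, the two computations are the same estimate written differently; your version is slightly cleaner but not a genuinely distinct argument.
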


\begin{proof} Let $\phi_{t}$ be the flow of $X$, then we have for any $t>0$
\begin{align}
\phi_{t}^{*}g_{0}=g_{0}+tL_{X}g_{0}+E(t),
\end{align}
where $E(t)$ is an error term that can be estimated as
\begin{align}
|E(t)|\le\frac{t^{2}}{2}\sup_{s\in[0,t]}\left(\left|\frac{\partial^{2}}{\partial s^{2}}
\phi_{s}\right|\right),\label{eqvf1}
\end{align}
here $|\cdot|$ is the usual pointwise norm on $S^{2}(T^{*}\R^{n})$. In particular 
\begin{align}
K_{X}^{*}g_{0}=g_{0}+L_{X}g_{0}+E(1).\label{eqvf2}
\end{align}
Since $X$ is homogeneous of degree 2, we have for any $p\in\R^{n}$
\begin{align}
\frac{\partial}{\partial t}|\phi_{t}(p)|^{2}
=2\langle\frac{\partial}{\partial t}\phi_{t}(p),
\phi_{t}(p)\rangle\le C|\phi_{t}(p)|^{3}\label{eqode1},
\end{align}
for some constant $C>0$ that only depends on $X$. Letting $r$ denote the 
distance of $p$ to the origin, it follows from (\ref{eqode1}) that for 
$0<r<\frac{1}{C}$ and $0\le t\le 1$ we have the inequality
\begin{align}
|\phi_{t}(p)|\le \frac{2r}{2-Crt},\label{eqode2}
\end{align}
and then $|\phi_{t}(p)|=O(r)$ as $r\rightarrow 0$. A similar argument shows that for 
all first order partial derivatives one has 
\begin{align}
|\partial_{l}\phi_{t}|=O(1) ~\mbox{as}~ r\rightarrow 0.\label{eqode3}
\end{align}
Since $g_{0}$ is the Euclidean metric, we have 
\begin{align*}
(\phi^{*}_{t}g_{0})_{ij}=\sum_{k,j}\partial_{k}(\phi_{t})_{i}\partial_{l}(\phi_{t})_{j},
\end{align*}
and using the chain rule we can write schematically 
\begin{align}
\frac{\partial^{2}}{\partial t^{2}}\phi_{t}^{*}g_{0}
=(\partial^{2}X)(\phi_{t})*X(\phi_{t})*\partial\phi_{t}*\partial\phi_{t}
+(\partial X)(\phi_{t})*(\partial X)(\phi_{t})*\partial\phi_{t}*\partial\phi_{t},
\label{eqode4}
\end{align}
and by (\ref{eqode3}),(\ref{eqode4}) we conclude that for $r$ sufficiently small 
\begin{align}
\left|\frac{\partial^{2}}{\partial t^{2}}\phi_{t}^{*}g_{0}\right|\le C^{\prime}r^{2},
\label{eqode5}
\end{align}
for $C^{\prime}$ depending only on $X$. By  (\ref{eqvf1}),(\ref{eqvf2}) and 
(\ref{eqode5}) the result follows.
\end{proof}

\begin{lemma}\label{lemsrt1} Let g a metric defined on $B_{\rho}(0)$ with a 
$C^{0}$-orbifold point at the origin. Suppose that $g$ has constant scalar 
curvature and is (extended) obstruction-flat or satisfies $\eqref{eqsrt11inorb}$ on 
$B_{\rho}(0)\backslash\{0\}$. Then there exists a change of 
coordinates $\tilde{\phi}$, defined in some small neighborhood around the origin, 
such that $\tilde{\phi}_{*}g$ satisfies 
\begin{align}
g&=g_{0}+O(|x|^{2}),\label{eqsrt8}\\
\partial^{l}g&=O(|x|^{2-|l|}),\label{eqsrt9}
\end{align}
for any multi-index $l$ with $|l|\ge 1$ as $|x|\rightarrow 0$. 
\end{lemma}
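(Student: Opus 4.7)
The strategy mirrors the optimal decay argument of Proposition \ref{propopt1}, but centered at the singular point rather than at infinity. The plan is to first establish a divergence-free gauge near the origin, use the elliptic equation satisfied by $h = g - g_0$ to extract the leading Taylor behavior, and finally absorb the degree-one part by a diffeomorphism generated by a quadratic vector field, via Lemmas \ref{lemlie} and \ref{lemode}.

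First, I would apply Lemma \ref{lemsrtdf} to obtain a diffeomorphism $\phi$ defined on a punctured ball around $0$ so that $\phi_* g = g_0 + h$ with $\delta_{g_0} h = 0$ and $|h|_{m,\alpha;0} = O(r^{\sigma})$ as $r \to 0$ for some $\sigma > 0$. Because $\delta h = 0$, Lemma \ref{propsrt1} with $t = 0$ gives the elliptic equation
\begin{align*}
\Delta^{k+1} h = C_{n,k}\, \mathcal{R}^{(k)}(h, g_0),
\end{align*}
whose right-hand side is a finite sum of schematic products $\nabla^{\alpha_1} h * \cdots * \nabla^{\alpha_j} h$ with $j \ge 2$ and $\alpha_1 + \cdots + \alpha_j = 2(k+1)$.

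Second, I would iterate to improve the pointwise decay of $h$. Interior Schauder estimates (Lemma \ref{lemdf1}) give $\nabla^j h = O(r^{\sigma - j})$, so $\mathcal{R}^{(k)}(h,g_0) = O(r^{2\sigma - 2(k+1)})$. Using a version of Lemma \ref{lemev2} adapted to expansions at the origin (which amounts to the mapping properties of $\Delta^{k+1}$ on weighted Sobolev spaces centered at $0$), I would write $h$, modulo any prescribed error, as a finite sum of homogeneous polynomial solutions of $\Delta^{k+1}$. Since $\sigma > 0$, the constant term vanishes. Repeating finitely many times, each step crossing an integer exceptional value of $\Delta^{k+1}$, yields
\begin{align*}
h = h_1 + O(r^{1+\epsilon})
\end{align*}
for some $\epsilon > 0$, where $h_1$ is a linear (degree-one homogeneous) tensor. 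Potential logarithmic terms in borderline dimensions are ruled out by $\delta h = 0$, precisely as in the proof of Proposition \ref{propmod1}.

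Third, by Lemma \ref{lemlie} write $h_1 = L_X g_0$ for a quadratic vector field $X$, and let $K_{-X}$ be the time-one flow of $-X$ (well defined for $r$ small). By Lemma \ref{lemode}, $K_{-X}^* g_0 = g_0 - h_1 + O(r^2)$, and since $K_{-X}(x) = x + O(r^2)$, pulling back $h - h_1 = O(r^{1+\epsilon})$ changes it only by $O(r^2)$ (through the chain-rule factors $\partial K_{-X} = I + O(r)$ acting on something of size $O(r)$). Hence $(\phi \circ K_{-X})_* g = g_0 + O(r^{1+\epsilon})$. If $1 + \epsilon < 2$, one re-establishes the divergence-free gauge by Lemma \ref{lemsrtdf} applied to the improved metric and repeats the iteration; the decay exponent now crosses the exceptional value $1$ and locks onto $2$. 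The derivative estimate \eqref{eqsrt9} then follows from standard interior elliptic regularity for $\Delta^{k+1}$, together with the scaling of the nonlinear equation. The principal obstacle is verifying that after absorbing $h_1$ by $K_{-X}$, the reestablishment of the divergence-free gauge does not reintroduce a non-absorbable linear term; this uses the fact that $K_{-X}$ is tangent to the identity to second order, so any new linear piece arises from \emph{quadratic} rather than linear data and therefore is already $O(r^2)$.
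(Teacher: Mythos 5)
Your proposal follows essentially the same route as the paper's proof: apply Lemma \ref{lemsrtdf} to obtain a divergence-free gauge, iterate via Lemma \ref{lemev2} (adapted to the origin) to extract a degree-one piece $G_1$ modulo $O(r^{1+\epsilon})$, use Lemmas \ref{lemlie} and \ref{lemode} to absorb $G_1$ by the flow of a quadratic vector field, re-gauge as in Corollary \ref{remgdf2}, and iterate to decay order $2$. One small clarification on your final worry: the reason re-gauging cannot reintroduce a linear term is simply that after absorbing $G_1$ the new $h'$ is already $O(r^{\min\{1+\epsilon,2\}})$ with exponent strictly greater than $1$, and the gauge-fixing of Proposition \ref{propdf0} preserves this decay rate, so any degree-one homogeneous term in the subsequent expansion must vanish outright; this is slightly cleaner than the ``tangent to the identity to second order'' heuristic you invoke.
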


\begin{proof}
Let $\phi$ be the gauge given by Lemma \ref{lemsrtdf} and if we take 
$h=\phi_{*}g-g_{0}$ we obtain
\begin{align}
\frac{c_{n,k}}{2(n-2)}\Delta^{k+1}h=\mathcal{R}^{(k)}(h,g_{0})\label{eqell1}.
\end{align}
Since $h=O(r^{\sigma})$ as $r\rightarrow 0$ we have 
$\Delta^{k+1}h=O(r^{2\sigma-2(k+1)})$ as $r\rightarrow 0$ and 
as in the proof of Proposition \ref{propopt1}, for $p>n$  and for $\tau>0$ 
such that $2\sigma-\tau$ is nonexceptional and positive, there exists 
$h^{\prime}\in {W^{\prime}}^{n,p}_{2\sigma-\tau}$ such that 
$\Delta^{k+1}\left(h-h^{\prime}\right)=0$ on 
$B_{\rho}(0)\backslash\{0\}$. Since both $h,h^{\prime}$  
are $o(1)$ as $r\rightarrow 0$ we conclude that
\begin{align}
h-h^{\prime}=G_{1}+O(r^{2})~\mbox{as}~r\rightarrow 0,
\end{align}
and the components of $G_{1}$ are linear functions. As in Proposition 
\ref{propopt1} we can use induction to show that $h$ satisfies
\begin{align}
h=G_{1}+O(r^{1+\epsilon})~\mbox{as}~r\rightarrow 0,
\end{align}
for some $\epsilon>0$. The strategy for proving (\ref{eqsrt8}), (\ref{eqsrt9}) 
is slightly different to that used to prove Proposition \ref{propopt1}, but 
is still based on an argument used in \cite{ct}. From $\delta h=0$ on 
$B_{\rho^{\prime}}(0)\backslash\{0\}$, it follows that $\delta G_{1}\equiv 0$ 
and by Lemma \ref{lemlie}, $G_{1}=L_{X}g_{0}$ for some vector field $X$ such 
that $X(p)$ is homogeneous of degree $2$ in $p$. Assume that 
$\rho^{\prime}$ is sufficiently small so that $K_{X}$, the 
diffeomorphism obtained by taking the flow of $X$ to time 1, is defined. 
By Lemma \ref{lemode}
\begin{align}
K_{X}^{*}g_{0}-g_{0}-L_{X}g_{0}=O(r^{2})~\mbox{as}~r\rightarrow 0,
\end{align}
and from
\begin{align}
\left(\phi_{*}g-K_{X}^{*}g_{0}\right)+\left(K_{X}^{*}g_{0}-g_{0}-L_{X}g_{0}\right)
=O(r^{\min\{2,1+\epsilon\}})~\mbox{as}~r\rightarrow 0,
\end{align}
we conclude that
\begin{align}
K_{-X}^{*}\phi_{*}g-g_{0}=O(r^{\min\{1+{\epsilon,2}\}})~\mbox{as}~r\rightarrow 0.
\end{align}
As in Corollary \ref{remgdf2}, 
we can find a diffeomorphism $\phi^{\prime}$ 
defined on a smaller ball such that
\begin{align}
h'=\phi^{\prime}_{*}K_{-X}^{*}\phi_{*}g-g_{0},
\end{align}
\noindent
is divergence-free and $h'=O(r^{\min\{1+\epsilon,2\}})$ as $r\rightarrow 0$. 
With this new $h'$ we argue again as in 
the proof of Proposition \ref{propopt1} to obtain (\ref{eqsrt8}) and (\ref{eqsrt9}) 
as needed. 
\end{proof}
\begin{lemma}\label{lemeuc}
In the coordinate system constructed in Lemma {\em{\ref{lemsrt1}}} we have
\begin{align}
\nabla^{l}Rm=\partial^{l}Rm+O(r^{1-|l|})~\mbox{as}~r\rightarrow 0,\label{eqeuc1} 
\end{align}
for any multi-index $l$ with $|l|\ge 1$ and
\begin{align}
\Delta_{g}^{m}Ric(g)=\Delta_{g_{0}}^{m}Ric(g)
+O(r^{-2(m-1)})~\mbox{as}~r\rightarrow 0.\label{eqeuc2}
\end{align}
with $m\ge 1.$
\end{lemma}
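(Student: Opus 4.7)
The plan is to leverage the pointwise decay estimates from Lemma~\ref{lemsrt1}, namely $g_{ij}-(g_0)_{ij}=O(r^2)$ and $\partial^l g_{ij}=O(r^{2-|l|})$, and to reduce both statements to bookkeeping of powers of $r$ in products of Christoffels, inverse metrics, and derivatives of $Ric$ and $Rm$.

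First I would record two auxiliary estimates. From $g^{ij}=g_0^{ij}+O(r^2)$ (and correspondingly $\partial^p g^{ij}=O(r^{2-p})$ for $p\ge 1$), combined with the formula $\Gamma^k_{ij}=\tfrac12 g^{kl}(\partial_i g_{jl}+\partial_j g_{il}-\partial_l g_{ij})$, a direct computation gives
\begin{align*}
\partial^l \Gamma = O(r^{1-|l|}) \quad \text{as } r\to 0, \ |l|\ge 0.
\end{align*}
In Euclidean coordinates the curvature has the schematic form $Rm=\partial\Gamma+\Gamma*\Gamma$, so $Rm=O(1)$ and, by further differentiation, $\partial^l Rm = O(r^{-|l|})$. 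The same bound $\partial^l Ric=O(r^{-|l|})$ follows by contraction with $g^{-1}$.

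For \eqref{eqeuc1} I would argue by induction on $|l|$. Writing $\nabla=\partial+\Gamma*$ schematically on tensors and iterating, one obtains
\begin{align*}
\nabla^l Rm - \partial^l Rm = \sum_{b\ge 1}\, c_{a_1,\dots,a_b,c}\, (\partial^{a_1}\Gamma)*\cdots *(\partial^{a_b}\Gamma)*\partial^c Rm,
\end{align*}
where the sum is over multi-indices with $b+a_1+\cdots+a_b+c=|l|$. Each such term is bounded by
\begin{align*}
r^{(1-a_1)+\cdots+(1-a_b)-c}=r^{2b-|l|}\le r^{2-|l|}\le r^{1-|l|},
\end{align*}
since $b\ge 1$. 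This yields \eqref{eqeuc1} (in fact with a slightly stronger exponent than claimed).

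For \eqref{eqeuc2} I would exploit the identity $\Delta_g^m Ric=g^{i_1 j_1}\cdots g^{i_m j_m}\nabla_{i_1}\nabla_{j_1}\cdots\nabla_{i_m}\nabla_{j_m}Ric$, which is valid because $\nabla g=0$. Expanding each $\nabla$ as $\partial+\Gamma*$ and each $g^{i_s j_s}$ as $g_0^{i_s j_s}+O(r^2)$, the isolated leading term is $\Delta_{g_0}^m Ric$. Every other term contains either a factor $g^{ij}-g_0^{ij}=O(r^2)$ multiplying $2m$ partial derivatives of $Ric$, contributing $O(r^2)\cdot O(r^{-2m})=O(r^{2-2m})$, or at least one Christoffel factor whose contribution, by the same counting as in \eqref{eqeuc1}, is $O(r^{2b-2m})\le O(r^{2-2m})$ with $b\ge 1$. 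Combined with the uniform boundedness of the $g^{-1}$-factors, the total error is $O(r^{-2(m-1)})$, as claimed. The main obstacle is purely combinatorial — organizing all the lower-order terms in $\Delta_g^m$ and verifying that none exceeds $r^{-2(m-1)}$ — but once the $\Gamma$ and $Rm$ estimates above are in hand, every term admits the desired bound by direct power counting.
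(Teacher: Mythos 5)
Your argument is correct and follows essentially the same route as the paper: expand $\nabla = \partial + \Gamma\, *$, record $\partial^{a}\Gamma = O(r^{1-a})$ and $\partial^{c}Rm,\ \partial^{c}Ric = O(r^{-c})$ from the estimates of Lemma~\ref{lemsrt1}, and count powers of $r$ term by term; the paper does the base cases $|l|=1$ and $m=1$ explicitly and appeals to induction, whereas you make the general term $(\partial^{a_1}\Gamma)*\cdots*(\partial^{a_b}\Gamma)*\partial^{c}Rm$ with $b + \sum a_i + c = |l|$, $b\ge 1$ explicit and bound it by $r^{2b-|l|}$, and for \eqref{eqeuc2} you use $\nabla g = 0$ to commute all inverse-metric factors out front before expanding $\nabla^{2m}Ric$ — a clean way to package exactly the bookkeeping the paper's inductive step would carry out.
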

\begin{proof} 
To show (\ref{eqeuc1}), we consider first the case $|l|=1$ and we 
write $\nabla Rm$ schematically as
\begin{align}
\nabla Rm=\partial Rm+\Gamma*Rm,
\end{align}
and note that the terms $\Gamma*Rm$ are $O(r)$ as $r\rightarrow 0$. 
The general case follows easily by induction. For (\ref{eqeuc2}) we 
start also with the case $m=1$ and write 
\begin{align}
\begin{split}\label{eqeuc3}
\Delta_{g}Ric &=g^{-1}*\nabla\nabla Ric 
=g^{-1}*\nabla\left(\partial Ric+\Gamma*Ric\right)\\
& =g^{-1}*\left(\partial^{2}Ric+\partial\Gamma*Ric+\Gamma*\partial Ric
+\Gamma*\Gamma*Ric\right)\\
&= g^{-1}*\partial^{2}Ric+\partial\Gamma*Ric+\Gamma*\partial Ric+\Gamma*\Gamma*Ric,
\end{split}
\end{align}
and the term $g^{-1}*\partial^{2}Ric$ has the form 
$g^{kl}\partial_{k}\partial_{l}Ric_{ij}$ which we can also write as
\begin{align}
\Delta_{g_{0}}Ric_{ij}+\left(g^{kl}-\delta^{kl}\right)\partial_{kl}Ric_{ij}.
\end{align}
The terms $\left(g^{kl}-\delta^{kl}\right)\partial_{kl}Ric_{ij}$,
$\partial\Gamma*Ric$, and $\Gamma*\partial Ric$  in (\ref{eqeuc3}) are 
$O(1)$ as $r\rightarrow 0$ and the terms $\Gamma*\Gamma*Ric$ are 
$O(r^{2})$ as $r\rightarrow 0$ as needed. The other cases follow also by induction.
\end{proof}

\begin{proof}[Proof of Theorems \ref{tr1} and \ref{tr2}]
By Lemma \ref{lemsrt1}, we can find a change of coordinates $\phi$ around the
origin such that $\phi_{*}g$ satisfies (\ref{eqsrt8}) and (\ref{eqsrt9}). 
Recall that the obstruction-flat systems have the form 
\begin{align}
\Delta^{k-1}_{g} B_{ij} =\sum_{j= 2}^{k+1}\sum_{\alpha_{1}+\ldots+\alpha_{j}
=2(k+1)-2j}\nabla^{\alpha_{1}}_{g}Rm*\ldots*\nabla^{\alpha_{j}}_{g}Rm.\label{eqsrt10}
\end{align}
From the expression (\ref{eqobs1}) we can write the Bach tensor as
\begin{align}
B_{ij}=\Delta A_{ij}-\nabla_{i}\nabla^{k}A_{kj}+Rm*Rm,
\end{align}
and using that $g$ has constant scalar curvature together with the Bianchi identity 
we can rewrite  (\ref{eqsrt10}) as
\begin{align}
\Delta^{k}_{g} Ric 
= \sum_{j=2}^{k+1}\sum_{\alpha_{1}+\ldots+\alpha_{j}=2(k+1)-2j}
\nabla_{g}^{\alpha_{1}}Rm*\ldots*\nabla_{g}^{\alpha_{j}}Rm,
\label{eqsrt11}
\end{align}
which is exactly the same form as $\eqref{eqsrt11inorb}$. 
From Lemma \ref{lemeuc},  (\ref{eqsrt11}) becomes
\begin{align}
\Delta_{g_{0}}^{k}Ric= T,
\label{eqsrteuc}
\end{align}
where $T =  O(r^{-2(k-1)})$ as $r\rightarrow 0$.
Note that $T\in L^{p}$ near the origin for $p=\infty$ if $k=1$ and for 
any $1\le p<\frac{n}{2(k-1)}$ if 
$k>1$. On the other hand, since $Ric$ is bounded near the origin, $Ric\in L^{p}$ 
for any such $p$. It follows that $Ric(g)$ is a weak solution of 
$\Delta^{k}_{g_{0}}Ric=T$ on $B_{\rho}(0)\backslash\{0\}$ and it is easy to 
prove that in that case $Ric(g)$ extends to a weak solution of (\ref{eqsrteuc}) 
on $B_{\rho}(0)$. We conclude that $Ric\in W^{2k,p}$. Choose 
\begin{align}\label{pchc}
p=\left\{
\begin{array}{ll}
\infty&~\mbox{if}~k=1\\
(1-\epsilon)\frac{n}{2(k-1)}&~\mbox{with}~0<\epsilon<\frac{1}{2k-1}~\mbox{if}~k>1.
\end{array}
\right.
\end{align}
Observe also that $0<2k-1-\frac{n}{p}\le 1$ so by the 
Sobolev inequality, for any $\alpha$ such that $0<\alpha<2k-1-\frac{n}{p}$ we have
\begin{align}
\|\nabla Ric\|_{C^{\alpha}(B_{\rho}(0))}\le C\|\nabla Ric\|_{W^{2k-1,p}(B_{\rho}(0))},
\label{eqsrt11_1}
\end{align}
with $C=C(n,p,k,\rho)$ and then $Ric\in C^{1,\alpha}(B_{\rho}(0))$. Note that 
from the estimates (\ref{eqsrt8}) and (\ref{eqsrt9}) we have $g\in C^{1,\alpha}$, 
which is sufficient for the existence of harmonic coordinates at 
the origin \cite[Lemma 1.2.]{dk}.
In this harmonic coordinate system the metric $g$ is also $C^{1,\alpha}$ and 
solves (\ref{eqsrt11}). Moreover, by (\ref{eqsrt11_1}) and \cite[Corollary 1.4]{dk}, 
$Ric\in C^{1,\alpha}$ near the origin.

We then have that $g$ is a solution of (\ref{eqsrteuc}) and is also a 
solution of an equation of the form
\begin{align}
\frac{1}{2}g^{ij}\partial^{2}_{ij} g_{kl}+Q_{kl}(\partial g, g)
=-Ric_{kl}(g),\label{eqsrt12}
\end{align}
where $Q(\partial g,g)$ is an expression that is quadratic in $\partial g$, 
polynomial in $g$ and has  $\sqrt{|g|}$ in its denominator. Letting $p$ and 
$\alpha$ be as above, we know that $g$ and 
$Ric(g)$ are $C^{1,\alpha}$ at the origin, in particular they both are in 
$W^{1,p}$.  Using elliptic regularity in (\ref{eqsrt12}) we conclude that 
$g \in W^{3,p}$ and the Sobolev inequality (compare (\ref{eqsrt11_1})) implies that 
$g\in C^{2,\alpha}$. Furthermore, since $Ric\in C^{1,\alpha}$ it also follows 
that $g\in C^{3,\alpha}$ (see \cite[Theorem 4.5]{dk}). 
With this regularity in $g$ we can write (\ref{eqsrt11}) as (\ref{eqsrteuc}) 
in harmonic coordinates, i.e., we can write (\ref{eqsrt11}) as an equation 
of the form $\Delta_{g_{0}}^{k}Ric=T'$ with $T'\in L^{p}$.
 
Next, we claim that $\Delta_{g_{0}}^{k} Ric\in W^{1,p}$. To see this, 
take one derivative of \eqref{eqsrt11}. 
Since $g \in C^{3,\alpha}$, one sees that all of the terms on the 
right hand side are $O( r^{-2(k-1)})$ as $r \rightarrow 0$,
which is in $L^p$ for $p$ as in \eqref{pchc}. 
This shows that $\Delta^k_g Ric \in W^{1,p}$. 
Replacing $\Delta^k_g Ric$ with $\Delta^k_{g_0} Ric$
will introduce terms as in Lemma \ref{lemeuc}, but using the fact that 
$Ric$ is now in $C^{1,\alpha}$, we see that these terms are also in $W^{1,p}$,
and consequently  $\Delta_{g_{0}}^{k} Ric\in W^{1,p}$. Elliptic regularity 
then implies that $Ric\in W^{2k+1,p}$. By the Sobolev inequality, 
$Ric\in C^{2,\alpha}$ and then \eqref{eqsrt12} implies $g\in C^{4,\alpha}$. 
It is clear that we can bootstrap the above argument to prove that $g$ 
is smooth at the origin.      
\end{proof} 

\appendix
\section{Turan's Lemma}\label{turan_lemma}
Given complex numbers $z_{1},\ldots,z_{d}$ and a nonnegative integer $l$ we use $S_{l}$ denote the sum 
\begin{align}
c_1, \ldots,c_d \in \mathbb{C}, \ \ S_{l}=\sum_{j=1}^{d}c_{j}z_{j}^{l}.\label{trigsum}
\end{align}

The following lemma is the version of Turan's Lemma that we will use throughout this section:

\begin{lemma}
\label{cortur1} 
Let $z_{1},\ldots,z_{d}$ where $d>1$ be complex numbers with $|z_{j}|\ge 1$ for $j=1,\ldots,d$ and let $m$ be an integer with $m\ge 1$. Then
\begin{align}
|S_{0}|^{2}\le C\max\{|S_{m+1}|^{2},\ldots,|S_{m+d}|^{2}\},
\end{align}
where the positive constant $C=C(m,d)$ can be estimated as
\begin{align}
C\le A(d)\left(\frac{m+d}{d}\right)^{2(d-1)},
\end{align}
for some constant $A(d)$ depending only on $d$.
\end{lemma}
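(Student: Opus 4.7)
The plan is to express $S_0$ explicitly as a linear combination $S_0 = \sum_{k=1}^d b_k S_{m+k}$ and to bound $\sum_k|b_k|$, making essential use of the hypothesis $|z_j|\geq 1$. Assume first that the $z_j$ are distinct; the general case will follow by approximation. Then the Vandermonde matrix $V_{ij}=z_j^{i-1}$ is invertible, and setting $\tilde{c}_j=c_j z_j^{m+1}$ gives $S_{m+i}=\sum_j \tilde{c}_j z_j^{i-1}$. Solving for $\tilde{c}$ and substituting into $S_0=\sum_j c_j=\sum_j \tilde{c}_j z_j^{-(m+1)}$ presents the $b_k$ as the coefficients of the unique polynomial $Q(z)=\sum_{k=1}^d b_k z^{k-1}$ of degree $\leq d-1$ that Lagrange-interpolates $z\mapsto z^{-(m+1)}$ at the nodes $z_1,\ldots,z_d$. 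Once $\sum_k|b_k|$ is controlled, Cauchy--Schwarz immediately yields the stated inequality with $C=\bigl(\sum_k|b_k|\bigr)^2$.

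The key move will be to reciprocate the nodes: set $w_j=1/z_j$, so $|w_j|\leq 1$. Rewriting the interpolation condition in terms of $\tilde{Q}(w)=w^{d-1}Q(1/w)=\sum_k b_k w^{d-k}$ shows that $\tilde{Q}$ interpolates the single monomial $w^{m+d}$ at the points $w_1,\ldots,w_d$ in the closed unit disk. Applying Newton's forward-difference formula, I would expand
\begin{equation*}
\tilde{Q}(w) = \sum_{k=0}^{d-1}\bigl[w_1,\ldots,w_{k+1}\bigr]\bigl(w^{m+d}\bigr)\cdot \prod_{j=1}^{k}(w-w_j),
\end{equation*}
and invoke the classical identity
\begin{equation*}
\bigl[w_1,\ldots,w_{k+1}\bigr]\bigl(w^{m+d}\bigr)=h_{m+d-k}(w_1,\ldots,w_{k+1}),
\end{equation*}
where $h_r$ denotes the complete homogeneous symmetric polynomial. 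Since $|w_j|\leq 1$ and $h_r$ in $s$ variables is a sum of $\binom{r+s-1}{s-1}$ unit-bounded monomials, each divided difference is bounded by $\binom{m+d}{k}$.

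The remainder is straightforward combinatorial bookkeeping. Each factor $\prod_{j=1}^{k}(w-w_j)$ has $\ell^1$-norm of coefficients at most $\prod_{j=1}^k(1+|w_j|)\leq 2^k$, so the triangle inequality yields
\begin{equation*}
\sum_{k=1}^{d}|b_k|\;\leq\;\sum_{k=0}^{d-1}\binom{m+d}{k}\,2^k.
\end{equation*}
Using $\binom{m+d}{k}\leq (m+d)^k/k!$ and absorbing all $d$-dependent factors into a constant $A(d)^{1/2}$ gives $\sum_k|b_k|\leq A(d)^{1/2}\bigl((m+d)/d\bigr)^{d-1}$, and squaring yields the claim. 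The non-distinct case is handled by perturbing the $z_j$ slightly, applying the estimate for distinct nodes, and passing to the limit (both sides of the inequality are continuous in the $z_j$).

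I expect the main obstacle to be identifying the correct reformulation rather than any technical difficulty: the naive Lagrange formula in the $z_j$ gives denominators $\prod_{k\neq j}(z_j-z_k)$ that blow up when the nodes cluster, precluding any uniform bound. It is the reciprocation $z\mapsto 1/z$ that places all nodes in the closed unit disk and turns the problem into Newton-interpolating a pure monomial, whose divided differences are symmetric polynomials with trivially bounded coefficients, giving an estimate depending only on $m$ and $d$.
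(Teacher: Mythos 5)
Your proof is correct and follows precisely the interpolation argument presented in the cited reference (\cite[Section~1.1]{Nazarov}): writing $S_0$ as a linear combination $\sum_k b_k S_{m+k}$ via Lagrange interpolation of $z\mapsto z^{-(m+1)}$, reciprocating $z\mapsto 1/z$ to place the nodes in the closed unit disk, and bounding $\sum_k |b_k|$ via the Newton form and the classical identity expressing divided differences of monomials as complete homogeneous symmetric polynomials. The only nitpick is terminological: the final step $|S_0|\le\bigl(\sum_k|b_k|\bigr)\max_k|S_{m+k}|$ is the triangle inequality combined with an $\ell^1$--$\ell^\infty$ bound, not Cauchy--Schwarz, but this does not affect the argument.
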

\begin{proof}The proof is a straightforward modification 
of that in \cite[Section 1.1]{Nazarov}. 
\end{proof}

\subsection{An integral form of Turan's Lemma}
Let $h$ be a positive number and consider the arithmetic progression $\{t_{l}=lh\}_{l}$ where $l$ are nonnegative integers. Consider also complex numbers $\zeta_{1},\ldots,\zeta_{d}$ such that $Re(\zeta_{j})\ge 0$ for $j=1,\ldots,d$. If we let $p(t)$ denote
\begin{align}
p(t)=\sum_{j=1}^{d}c_{j}e^{\zeta_{j}t},
\end{align}
then 
\begin{align}
p(lh)=\sum_{j=1}^{d}c_{j}z^{l}_{j}(h),
\end{align}
where $z_{j}(h)=e^{\zeta_{j}h}$. Since  $|z_{j}(h)|\ge 1$ we have from Lemma \ref{cortur1} an inequality of the form
\begin{align}
|p(0)|^{2}\le C(m,d)\left(\max_{l=m+1,\ldots,m+d}\left\{|p(lh)|^{2}\right\}\right)\le
C(m,d)\left(\sum_{l=m+1}^{m+d}|p(lh)|^{2}\right),\label{turint1}
\end{align}
for any integer $m\ge 1$ where $C(m,d)\le A(d)\left(\frac{m+d}{d}\right)^{2(d-1)}$. 
\begin{lemma}\label{lemturint1} Let $\zeta_{1},\ldots,\zeta_{d}$ be complex numbers satisfying $Re(\zeta_{j})\ge 0$ for $j=1,\ldots,d$, and let $p(t)=\sum_{j=1}^{d}c_{j}e^{\zeta_{j}t}$. Then for any positive numbers $0<a<b$
\begin{align}
|p(0)|^{2}\le A(d)\left(\frac{b}{b-a}\right)^{2(d-1)}\frac{(b+a)}{(b-a)^{2}} \int_{a}^{b}|p(t)|^{2}dt.
\end{align}
\end{lemma}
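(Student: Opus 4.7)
\noindent The plan is to apply the discrete Turan-type inequality \eqref{turint1} pointwise in the sampling step $h > 0$ and then integrate in $h$ to pass from a discrete to an integral estimate. Setting $z_j = e^{\zeta_j h}$, the hypothesis $Re(\zeta_j) \geq 0$ gives $|z_j| \geq 1$, so for every integer $m \geq 1$ and every $h > 0$ the inequality \eqref{turint1} yields
\begin{align*}
|p(0)|^2 \leq A(d)\left(\frac{m+d}{d}\right)^{2(d-1)} \sum_{l=m+1}^{m+d} |p(lh)|^2.
\end{align*}

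\noindent First I would restrict $h$ to the interval $I_m = [a/(m+1),\, b/(m+d)]$, which is non-empty whenever $m(b-a) \geq ad - b$; for such $h$ every sampling point $lh$ with $m+1 \leq l \leq m+d$ lies inside $[a,b]$. Integrating the pointwise inequality over $I_m$ and performing the substitution $t = lh$ inside each summand gives
\begin{align*}
\int_{I_m} |p(lh)|^2 \, dh = \frac{1}{l}\int_{la/(m+1)}^{lb/(m+d)} |p(t)|^2 \, dt \leq \frac{1}{m+1}\int_a^b |p(t)|^2 \, dt,
\end{align*}
since $la/(m+1) \geq a$ and $lb/(m+d) \leq b$ for each $l$ in the summation range.

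\noindent Combining the $d$ summands with the length $|I_m| = (b(m+1) - a(m+d))/((m+1)(m+d))$ of the integration interval and the Turan constant, I arrive at
\begin{align*}
|p(0)|^2 \leq A(d)\left(\frac{m+d}{d}\right)^{2(d-1)} \cdot \frac{d(m+d)}{m(b-a) - (ad - b)} \int_a^b |p(t)|^2 \, dt,
\end{align*}
valid for every integer $m$ with $m(b-a) > ad - b$. I would then optimize in the integer $m$: the Turan constant is monotone increasing in $m$, while the denominator grows linearly in $m$, so the minimum of the combined constant is attained near $m + d \approx b(d-1)/(b-a)$ (and at $m=1$ in the regime where this value is already smaller than $d+1$). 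At this choice a direct computation gives a bound of order $b^{2(d-1)}/(b-a)^{2d-1}$, which is dominated by the stated quantity $(b/(b-a))^{2(d-1)} \cdot (b+a)/(b-a)^2$ since $(b+a)/(b-a) \geq 1$.

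\noindent The main technical point will be choosing the integer $m$ carefully: when $b - a$ is small relative to $b$ one must take $m$ of order $a(d-1)/(b-a)$, and this is precisely where the factor $(b/(b-a))^{2(d-1)}$ enters the constant, while when $b/(b-a)$ is comparable to $d$ one must instead take $m = 1$ and check directly that the resulting constant has the claimed form. These two regimes should be treated separately and the resulting constants absorbed into a single $A(d)$ depending only on $d$.
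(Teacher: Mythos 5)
Your strategy is the same as the paper's: apply the discrete Turan inequality~\eqref{turint1} for each sampling step $h$, choose the sampling window so that all $d$ sample points $lh$ for $m+1\le l\le m+d$ land inside $[a,b]$, integrate in $h$, and then tune $m$ to control the constant. The paper proceeds by fixing $h_{0}=(b-a)/(2d)$, $m=\lfloor c/h_{0}\rfloor$ with $c=(a+b)/2$, and averaging $h$ over $[\eta h_{0},h_{0}]$ with $c\eta=a$; your version keeps $m$ free and integrates $h$ over $I_{m}=[a/(m+1),\,b/(m+d)]$. Your intermediate bound
\begin{align*}
|p(0)|^{2}\le A(d)\Bigl(\tfrac{m+d}{d}\Bigr)^{2(d-1)}\frac{d(m+d)}{m(b-a)-(ad-b)}\int_{a}^{b}|p(t)|^{2}\,dt
\end{align*}
is correct, and the argument that at a well-chosen admissible $m$ the right side is dominated by $A(d)\bigl(b/(b-a)\bigr)^{2(d-1)}(b+a)/(b-a)^{2}$ is in spirit the same verification the paper does explicitly.

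One point to fix: your stated critical value $m+d\approx b(d-1)/(b-a)$ is not the minimizer of the constant but the \emph{pole} of the expression, i.e.\ exactly where $I_{m}$ degenerates to a point and the bound blows up (it corresponds to $m(b-a)=ad-b$ with equality). Writing $x=m+d$, the quantity to minimize is proportional to $x^{2d-1}/\bigl(x(b-a)-b(d-1)\bigr)$, whose critical point is $x^{*}=\tfrac{(2d-1)b}{2(b-a)}$; this is a fixed factor \emph{larger} than $b(d-1)/(b-a)$, and one should take an admissible integer $m$ near $x^{*}-d$ (or, in the small-$a$ regime, $m=1$). Once that slip is corrected the case split you outline goes through, and an easy calculation shows that at such $m$ the constant one obtains is at most $A(d)(b/(b-a))^{2(d-1)}\,(b-a)^{-1}$, which, since $b-a\le b+a$, is dominated by the stated $(b/(b-a))^{2(d-1)}(b+a)/(b-a)^{2}$. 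You also write the admissibility condition as $m(b-a)\ge ad-b$; strict inequality is needed for $|I_{m}|>0$. Apart from these details, the proposal is sound and essentially reproduces the paper's argument.
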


\begin{proof} Let $c=\frac{a+b}{2}$, set $h_{0}=\frac{b-c}{d}=\frac{b-a}{2d}$ and let $m$ be the integer part of $\frac{c}{h_{0}}$ (i.e. the only integer $m$ such that $m\le \frac{c}{h_{0}}< m+1$). Note that in this case $m\ge 1$. 
For any $h>0$ by \eqref{turint1} we have
\begin{align}
|p(0)|^{2}\le A(d)\left(\frac{m+d}{d}\right)^{2(d-1)}\sum_{l=m+1}^{m+d}\left(|p(lh)|^{2}\right).\label{turint2}
\end{align}
By our choice of $m$ and $h_{0}$ we have 
\begin{align}
(m+d)h_{0}\le c+dh_{0}=b,
\end{align}
so then $a\le lh_{0}\le b$ for $l=m+1,\ldots,m+d$. Let $\eta=\frac{2a}{a+b}$, i.e. $c\eta=a$.  Note that $0<\eta<1$. By taking the average of (\ref{turint2}) respect to $h$ on the interval $[\eta h_{0},h_{0}]$ we have
\begin{align}
|p(0)|^{2}\le A(d)\left(\frac{m+d}{d}\right)^{2(d-1)}\frac{1}{h_{0}(1-\eta)}\int_{\eta h_{0}}^{h_{0}}\left(\sum_{l=m+1}^{m+d}|p(lh)|^{2}\right)dh,
\end{align}
It is easy to see that if
$f:\R\mapsto\R$ is a continuous nonnegative function and and 
$T_{0},T_{1}$ are positive numbers with $T_{0}<T_{1}$, then
\begin{align}
\int_{T_{0}}^{T_{1}}\left(\sum_{l=m+1}^{m+d}f(lh)\right)dh\le\frac{d}{2}\int_{(m+1)T_{0}}^{(m+d)T_{1}}f(h)dh.
\end{align} 
Therefore, we have
\begin{align}
\begin{split}
|p(0)|^{2} 
&\le A(d)\frac{d}{2}\left(\frac{m+d}{d}\right)^{2(d-1)}\frac{1}{h_{0}(1-\eta)}\int_{a}^{b}|p(t)|^{2}dt,
\end{split}
\end{align}
where we have used that $(m+1)\eta h_{0}>c\eta=a$. We also have $(1-\eta)h_{0}=\frac{(b-a)^{2}}{2d(b+a)}$. On the other hand
\begin{align}
\left(\frac{m+d}{d}\right)^{2(d-1)}=\left(\frac{(m+d)h_{0}}{dh_{0}}\right)^{2(d-1)}\le 2^{2(d-1)}\left(\frac{b}{b-a}\right)^{2(d-1)},
\end{align} 
so the result follows.
\end{proof}
\begin{corollary}\label{corturint1}
If $p(t)$ is as before then for any $R>0$ we have
\begin{align}
\|p\|^{2}_{L^{\infty}[0,R]}\le \frac{A(d)}{R}\int_{\frac{3R}{2}}^{2R}|p(t)|^{2}dt,\label{turlinfl2}
\end{align}
and also
\begin{align}
\int_{0}^{R}|p(t)|^{2}dt\le A(d)\int_{3\frac{R}{2}}^{2R}|p(t)|^{2}dt.\label{turl2l2}
\end{align}
\end{corollary}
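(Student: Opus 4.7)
The plan is to exploit the translation invariance of the class of exponential sums with $\operatorname{Re}(\zeta_j) \geq 0$ and then apply Lemma~\ref{lemturint1} directly. For any $t_{0}\ge 0$, the shifted function
\[
\tilde{p}(t) = p(t+t_{0}) = \sum_{j=1}^{d} \bigl(c_{j} e^{\zeta_{j} t_{0}}\bigr) e^{\zeta_{j} t}
\]
has exactly the same form as $p$, with new coefficients and identical exponents $\zeta_{j}$. Hence Lemma~\ref{lemturint1} applies to $\tilde p$ for any $0<a<b$, giving a pointwise bound on $|p(t_{0})|^{2}=|\tilde p(0)|^{2}$ in terms of an $L^{2}$ integral of $|p|$ on $[t_{0}+a,\,t_{0}+b]$.

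For \eqref{turlinfl2}, fix $t_{0}\in[0,R]$ and apply Lemma~\ref{lemturint1} to $\tilde p$ with the specific choice $a = \tfrac{3R}{2}-t_{0}$ and $b = 2R-t_{0}$. Since $t_{0}\le R$, we have $a\ge \tfrac{R}{2}>0$, $b-a = \tfrac{R}{2}$, and $b+a = \tfrac{7R}{2}-2t_{0}\le \tfrac{7R}{2}$, while $b/(b-a) \le 2R/(R/2) = 4$. Substituting $s = t+t_{0}$ in the resulting integral sends $[a,b]$ to $[\tfrac{3R}{2},2R]$, and the lemma yields
\[
|p(t_{0})|^{2} \;\le\; A(d)\cdot 4^{\,2(d-1)} \cdot \frac{7R/2}{(R/2)^{2}}\int_{3R/2}^{2R} |p(s)|^{2}\,ds \;=\; \frac{A'(d)}{R}\int_{3R/2}^{2R}|p(s)|^{2}\,ds,
\]
with $A'(d)$ depending only on $d$. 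Taking the supremum over $t_{0}\in[0,R]$ gives \eqref{turlinfl2} (after absorbing $A'(d)$ into the notation $A(d)$, which in this paper denotes a generic dimensional constant).

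Inequality \eqref{turl2l2} is then an immediate consequence of \eqref{turlinfl2} by integrating the pointwise bound over $[0,R]$:
\[
\int_{0}^{R}|p(t)|^{2}\,dt \;\le\; R\,\|p\|_{L^{\infty}[0,R]}^{2} \;\le\; R\cdot\frac{A(d)}{R}\int_{3R/2}^{2R}|p(t)|^{2}\,dt \;=\; A(d)\int_{3R/2}^{2R}|p(t)|^{2}\,dt.
\]
The only nontrivial step is choosing the shift parameters $a,b$ so that the factors $b/(b-a)$, $(b+a)/(b-a)^{2}$ from Lemma~\ref{lemturint1} produce the claimed $1/R$ scaling uniformly in $t_{0}\in[0,R]$; the choice $a = \tfrac{3R}{2}-t_{0}$, $b = 2R-t_{0}$ is what makes both quantities scale correctly, and this is really the only bookkeeping one needs.
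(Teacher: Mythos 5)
Your proof is correct and essentially identical to the paper's: both exploit the translation invariance of the form $\sum_j c_j e^{\zeta_j t}$ to reduce a pointwise bound at $t_0\in[0,R]$ to Lemma~\ref{lemturint1} with $a=\tfrac{3R}{2}-t_0$, $b=2R-t_0$, and both derive \eqref{turl2l2} from \eqref{turlinfl2} via the trivial bound $\int_0^R|p|^2 \le R\|p\|_{L^\infty[0,R]}^2$. The only cosmetic difference is that the paper picks $t_0$ at which the supremum is attained, while you take a supremum over $t_0$ at the end.
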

\begin{proof} Let $t_{0}\in[0,R]$ be such that $\|p\|_{L^{\infty}[0,R]}=|p(t_{0})|$, and consider $p_{t_{0}}(\tau)$ given by
\begin{align}
p_{t_{0}}(\tau)=\sum_{j=1}^{d}c_{j}e^{\zeta_{j}t_{0}}e^{\zeta_{j}\tau}.
\end{align}
Clearly $p_{t_{0}}(t-t_{0})=p(t)$ and $|p_{t_{0}}(0)|=\|p\|_{L^{\infty}[0,R]}$. 
By Lemma \ref{lemturint1} we have
\begin{align}
|p_{t_{0}}(0)|^{2}\le 4 \cdot 2^{d-1} A(d)\left(\frac{2R-t_{0}}{R}\right)^{d-1}\left(\frac{\frac{7}{2}R-2t_{0}}{R^{2}}\right)\int_{\frac{3R}{2}-t_{0}}^{2R-t_{0}}|p_{t_{0}}(\tau)|^{2}d\tau.\label{turintshift}
\end{align}
Using a change of variables in (\ref{turintshift}) we obtain
\begin{align}
\|p\|^{2}_{L^{\infty}[0,R]}\le 4 \cdot4^{d-1}A(d)\left(\frac{7}{2R}\right)\int_{\frac{3}{2}R}^{2R}|p(t)|^{2}dt,
\end{align}
which proves (\ref{turlinfl2}). Once we have shown (\ref{turlinfl2}) we can prove (\ref{turl2l2}) by writing
\begin{align}
\frac{1}{R}\int_{0}^{R}|p(t)|^{2}dt\le\|p\|^{2}_{L^{\infty}[0,R]}\le \frac{A(d)}{R}\int_{\frac{3R}{2}}^{2R}|p(t)|^{2}dt.
\end{align}
\end{proof}
\subsection{Proof of Lemma \ref{posneg}}\label{three_annulus_turan}
By making the change of variables $r = e^t$, it 
it is clear from (\ref{L2norm}) that it suffices to show the following lemma 
\begin{lemma}\label{3annulimult}
Let $p(t)$ be a sum of the form 
\begin{align}
p(t)=\sum_{j=1}^{d}\sum_{s=0}^{n_{j}}c_{j,s}t^{s}e^{\zeta_{j}t},
\end{align}
where $n_{j}\ge 0$ are integers, and $c_{j,s} \in \mathbb{C}$ are fixed.
Let $M=\sum_{j=1}^{d}n_{j}$. Then
\begin{enumerate}
\item If $\lambda = \min\{Re(\zeta_{1}),\ldots,Re(\zeta_{d})\}>0$ then for every positive integer $l$,
\begin{align}
e^{\lambda R}\int_{(l-1)R}^{lR}|p(t)|^{2}dt\le A(M+d)\int_{lR}^{(l+1)R}|p(t)|^{2}dt.\label{multgrowth}
\end{align}
\item If $\lambda = \min\{-Re(\zeta_{1}),\ldots,-Re(\zeta_{d})\}>0$, then for any positive integer $l$,
\begin{align}
\int_{lR}^{(l+1)R}|p(t)|^{2}dt\le C(M+d)e^{-\lambda R}\int_{(l-1)R}^{lR}|p(t)|^{2}dt
\label{multdecay}.
\end{align}
\end{enumerate}
\end{lemma}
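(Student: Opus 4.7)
The plan is to bootstrap from the pure-exponential integral Tur\'an inequality of Corollary \ref{corturint1} to a version allowing polynomial prefactors via a finite-difference approximation, then to extract the growth factor $e^{\lambda R}$ in \eqref{multgrowth} by the substitution $p(t)=e^{\lambda t}\tilde p(t)$, and finally to deduce the decay case \eqref{multdecay} by a reflection.

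The key intermediate step I will prove is the polynomial-weighted analogue of Corollary \ref{corturint1}: if $p(t)=\sum_{j=1}^{d}\sum_{s=0}^{n_{j}}c_{j,s}\,t^{s}e^{\zeta_{j}t}$ with $\mathrm{Re}(\zeta_{j})\ge 0$, then $\int_{0}^{R}|p|^{2}\,dt\le A(M+d)\int_{3R/2}^{2R}|p|^{2}\,dt$. The device is the identity $(e^{\epsilon t}-1)^{s}/\epsilon^{s}\to t^{s}$ uniformly on compact $t$-intervals as $\epsilon\to 0^{+}$. Setting
\begin{align*}
p_{\epsilon}(t)=\sum_{j=1}^{d}\sum_{s=0}^{n_{j}}c_{j,s}\,\frac{(e^{\epsilon t}-1)^{s}}{\epsilon^{s}}\,e^{\zeta_{j}t}=\sum_{j,k}\tilde c_{j,k}(\epsilon)\,e^{(\zeta_{j}+k\epsilon)t}
\end{align*}
produces, after binomial expansion, a pure exponential sum with exactly $M+d$ frequencies, all of non-negative real part, and (off a countable set of $\epsilon$) all pairwise distinct. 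Corollary \ref{corturint1} applies to $p_{\epsilon}$ with a constant depending only on $M+d$, and uniform convergence $p_{\epsilon}\to p$ on $[0,2R]$ lets me pass to the limit.

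For \eqref{multgrowth}, the translation $q(u)=p(u+(l-1)R)$ preserves the form of the sum and reduces the claim to $e^{\lambda R}\int_{0}^{R}|q|^{2}\le A\int_{R}^{2R}|q|^{2}$. Write $q(u)=e^{\lambda u}\tilde q(u)$, so that $\tilde q$ is again an exponential-polynomial sum, now with frequencies $\zeta_{j}-\lambda$ of non-negative real part; the intermediate step applied to $\tilde q$ gives $\int_{0}^{R}|\tilde q|^{2}\le A\int_{3R/2}^{2R}|\tilde q|^{2}$. Converting back through $|q(u)|^{2}=e^{2\lambda u}|\tilde q(u)|^{2}$,
\begin{align*}
\int_{0}^{R}|q|^{2}\le e^{2\lambda R}\int_{0}^{R}|\tilde q|^{2}\le A\,e^{2\lambda R}\int_{3R/2}^{2R}e^{-2\lambda u}|q|^{2}\,du\le A\,e^{-\lambda R}\int_{R}^{2R}|q|^{2},
\end{align*}
which is \eqref{multgrowth} after multiplying by $e^{\lambda R}$. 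The decay case \eqref{multdecay} then follows by the reflection $q(u)=p((l+1)R-u)$: this exchanges the two annuli and replaces the frequencies $\zeta_{j}$ by $-\zeta_{j}$, which under the hypothesis of \eqref{multdecay} satisfy $\mathrm{Re}(-\zeta_{j})\ge\lambda>0$, so the growth bound just proved applies verbatim to $q$.

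The main technical obstacle will be the promotion of Corollary \ref{corturint1} to its polynomial-weighted version with a constant that does not blow up. The finite-difference scheme achieves this precisely because the Tur\'an constant in Lemma \ref{cortur1} depends only on the number of frequencies, not on their spacing, so the coalescence $\zeta_{j}+k\epsilon\to\zeta_{j}$ does no harm in the limit. Once that step is secured, the growth and decay cases are obtained by elementary substitutions, as sketched above.
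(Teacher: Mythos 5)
Your proof is correct and carries out essentially the same argument as the paper's: conjugation by $e^{\lambda t}$ to extract the exponential factor, reflection to deduce the decay estimate \eqref{multdecay} from the growth estimate \eqref{multgrowth}, and an $\epsilon$-perturbation to reduce polynomial prefactors to sums of distinct exponentials, valid because the Tur\'an constant depends only on the number of exponential terms and not on their spacing. The only deviation is organizational: you upgrade Corollary \ref{corturint1} to its polynomial-weighted form first and then run the growth/decay argument once, whereas the paper proves both cases for simple exponential sums and only then perturbs and inducts on the multiplicities; your closed-form kernel $(e^{\epsilon t}-1)^{s}/\epsilon^{s}\to t^{s}$ is a tidier packaging of the same device, and makes the appearance of $M+d$ exponential frequencies transparent, but the underlying idea is identical.
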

\begin{proof}
We first consider functions $p:\R\mapsto\C$ that have the form
\begin{align}
p(t)=\sum_{j=1}^{d}c_{j}e^{\zeta_{j}t},\label{nomult}
\end{align}
with $c_{j}\in\C$, i.e., we do not consider the case of roots $\zeta_{j}$ 
with multiplicities.
Suppose that all numbers $\zeta_{j}$, $j=1,\ldots, d$ have positive real part. 
Let us first prove (\ref{multgrowth}) for $l=1$. 
Consider the function $\tilde{p}(t)$ given by
\begin{align}
\tilde{p}(t)=\sum_{j=1}^{d}c_{j}e^{(\zeta_{j}-\lambda)t}.
\end{align} 
Since $Re(\zeta_{j}-\lambda)\ge 0$ for all $j=1,\ldots, d$, we have from 
Corollary \ref{corturint1}
\begin{align}
\int_{0}^{R}|\tilde{p}(t)|^{2}dt\le A(d)\int_{\frac{3R}{2}}^{2R}|\tilde{p}(t)|^{2}dt\label{growth2},
\end{align}
multiplying both sides of (\ref{growth2}) by $e^{3\lambda R}$ we have
\begin{align}
\begin{split}
e^{\lambda R}\int_{0}^{R}e^{2\lambda t}|\tilde{p}(t)|^{2}dt
&\le e^{3\lambda R}\int_{0}^{R}|\tilde{p}(t)|^{2}dt
\le A(d)e^{3\lambda R}\int_{\frac{3R}{2}}^{2R}|\tilde{p}(t)|^{2}dt\\
&\le A(d)\int_{\frac{3R}{2}}^{2R}e^{2\lambda t}|\tilde{p}(t)|^{2}dt
\le A(d)\int_{R}^{2R}e^{2\lambda t}|\tilde{p}(t)|^{2}dt,
\end{split}
\end{align}
and $e^{2\lambda t}|\tilde{p}(t)|^2=|p(t)|^{2}$. For the case $l>1$, we write any $t\in [(l-1)R,(l+1)R]$ as $t=(l-1)R+\tau$ where $\tau\in [0,2R]$ and then we write $p(t)$ as 
\begin{align}
p(t)=q_{l}(\tau)=\sum_{j=1}^{d}c_{j,l}e^{\zeta_{j}\tau},
\end{align}
where $c_{j,l}=c_{j}e^{(l-1)R}$. Applying the above argument to $q_l(\tau)$ then 
(\ref{multgrowth}) follows after a change of variables. If now all numbers $Re(\lambda_{j})$ are negative for $j=1\ldots,d$, it suffices to prove (\ref{multdecay}) for $l=1$ since as before, the general case $l\ge 1$ follows after a change of variables. Let $\tilde{p}(t)=\sum_{j=1}^{d}c_{j}e^{(\zeta_{j}+\lambda)t}$ and write $t\in [0,2R]$ as $t=2R-\tau$ where $\tau\in [0,2R]$, then $c_{j}e^{(\zeta_{j}+\lambda)t}=c_{j}e^{(\zeta_{j}+\lambda)2R}e^{-(\zeta_{j}+\lambda)\tau}$ and $Re(-(\zeta_{j}+\lambda))\ge 0$, so by (\ref{multgrowth}), if we let $q(\tau)=\sum_{j=1}^{d}c_{j}e^{(\zeta_{j}+\lambda)2R}e^{-(\zeta_{j}+\lambda)\tau}$ we obtain
\begin{align}
\label{a28}
\int_{0}^{R}|q(\tau)|^{2}d\tau\le A(d)\int_{\frac{3R}{2}}^{2R}|q(\tau)|^{2}d\tau.
\end{align}
On the other hand,
\begin{align}
\int_{0}^{R}|q(\tau)|^{2}d\tau=\int_{R}^{2R}|\tilde{p}(t)|^{2}dt, 
\mbox{ and } 
\int_{\frac{3R}{2}}^{2R}|q(\tau)|^{2}d\tau=\int_{0}^{\frac{R}{2}}|\tilde{p}(t)|^{2}dt,
\end{align}
so from \eqref{a28}, we have
\begin{align}
\begin{split}
e^{\lambda R}\int_{R}^{2R}e^{-2\lambda t}|\tilde{p}(t)|^{2}dt 
\le e^{-\lambda R}\int_{R}^{2R}|\tilde{p}(t)|^{2}dt
\le A(d)e^{-\lambda R}\int_{0}^{\frac{R}{2}}|\tilde{p}(t)|^{2}dt\\
\le A(d)\int_{0}^{\frac{R}{2}}e^{-2\lambda t}|\tilde{p}(t)|^{2}dt 
\le A(d)\int_{0}^{R}e^{-2\lambda t}|\tilde{p}(t)|^{2}dt,
\end{split}
\end{align}
and $e^{-2\lambda t}|\tilde{p}(t)|^{2}=|p(t)|^{2}$.

For the general case involving multiplicity, 
we will only prove the statement for the case 
$\min\{Re(\zeta_{1}),\ldots,Re(\zeta_{d})\}>0$ 
since the other case will follow as in the above argument. 
We consider first the case corresponding to $n_{1}=1$ and $n_{2}=\ldots=n_{d}=0$. Let $\epsilon>0$ and let $p_{\epsilon}(t)$ be given by
\begin{align}
p_{\epsilon}(t)=c_{1,0}e^{\zeta_{1}t}+\frac{c_{1,1}}{\epsilon}\left(e^{(\zeta_{1}+\epsilon)t}-e^{\zeta_{1}t}\right)+\sum_{j=2}^{d}c_{j,0}e^{\zeta_{j}t},
\end{align}
then by \eqref{multgrowth} for the case with no multiplicity, we have
\begin{align}
e^{\lambda R}\int_{(l-1)R}^{lR}|p_{\epsilon}(t)|^{2}dt\le A(d+1)\int_{lR}^{(l+1)R}|p_{\epsilon}(t)|^{2}dt,\label{multgrowth1}
\end{align}
where $\lambda$ is defined as before, 
and since $A(d+1)$ does not depend on $\epsilon$ we can take the limit of (\ref{multgrowth1}) as $\epsilon$ tends to zero and obtain (\ref{multgrowth}). 
For the higher multiplicity case, (\ref{multgrowth}) can be proved using induction.
\end{proof}
\bibliography{Obstruction}
\end{document}